\newcommand{\R}{\mathbb{R}}
\newcommand{\C}{\mathbb{C}}
\newcommand{\Z}{\mathbb{Z}}
\newcommand{\N}{\mathbb{N}}
\newcommand{\Q}{\mathbb{Q}}
\newcommand{\dd}{\mathrm{d}}
\renewcommand{\S}{\mathbb{S}}
\DeclareMathOperator{\Hess}{Hess}
\DeclareMathOperator{\diag}{diag}
\DeclareMathOperator{\Sp}{Sp}
\DeclareMathOperator{\dist}{dist}
\newtheoremstyle{indented}
{7pt} 
{7pt} 
{} 
{1.5em} 
{\bfseries} 
{.} 
{.5em} 
{} 
\theoremstyle{definition}
\newtheorem{defn}{Definition}[section]
\theoremstyle{plain}
\newtheorem*{theorem*}{Theorem}
\newtheorem{theorem}{Theorem}
\newenvironment{preuve}{
	\noindent \textbf{Proof. }}{\hfill $\square$\medskip\par}
\newtheorem{prop}[defn]{Proposition}
\newtheorem{lem}[defn]{Lemma}
\theoremstyle{definition}
\newtheorem{rem}[defn]{Remark} 
\renewcommand*\env@matrix[1][*\c@MaxMatrixCols c]{%
  \hskip -\arraycolsep
  \let\@ifnextchar\new@ifnextchar
  \array{#1}}
\title{WKB eigenmode construction for analytic Toeplitz operators}
\author{Alix Deleporte\thanks{alix.deleporte@universite-paris-saclay.fr}}
\affil{Universit\'e de Strasbourg, CNRS, IRMA UMR 7501, F-67000
  Strasbourg, France\\ \vspace{1em}
  Institut f\"ur Mathematik, Universit\"at Z\"urich,
  Winterthurerstrasse 190. CH-8057 Z\"urich\\  \vspace{1em}
Universit\'e Paris-Saclay, CNRS, Laboratoire de math\'ematiques d'Orsay, F-91405, Orsay, France.}
\newcommand\blfootnote[1]{%
  \begingroup
  \renewcommand\thefootnote{}\footnote{#1}%
  \addtocounter{footnote}{-1}%
  \endgroup
}
\begin{document}

\maketitle

\blfootnote{This work was supported by grant
  ANR-13-BS01-0007-01\\
  MSC 2010 Subject classification: 32A25 32W50 35A20 35P10 35Q40 58J40
  58J50 81Q20}
\begin{abstract}
  We provide almost eigenfunctions for Toeplitz operators with
  real-analytic symbols, at the bottom of non-degenerate wells. These
  quasimodes follow the WKB ansatz; the error is
  $O(e^{-cN})$, where $c>0$ and $N\to +\infty$ is the inverse
  semiclassical parameter.
\end{abstract}

\section{Introduction}
\label{sec:introduction}

This article is concerned with Berezin-Toeplitz quantization. We
associate, to a real-valued function $f$ on a compact K\"ahler
manifold $M$, a sequence
of self-adjoint operators $(T_N(f))_{N\geq 1}$ acting on spaces
of sections over $M$. These operators are called \emph{Toeplitz operators}. Examples of Toeplitz operators are spin
systems (where $M$ is a product of two-spheres), which
are indexed by the total spin $S=\frac N2$. Motivated
by questions arising in the physics literature about the behaviour of
spin systems at low temperature, we wish to study the lowest-lying
eigenvalues and associated eigenvectors of
Toeplitz operators in the limit $N\to +\infty$. In this article we
specifically study exponential estimates, that is, approximate
expressions with $O(e^{-cN})$ remainder for some $c>0$.

Given $f:M\to \R$, we say that $P_0\in M$ is an elliptic point when
$\nabla f(P_0)=0$ and all eigenvalues of the Hessian of $f$ at $P_0$ are
nonzero and have the same sign. Elliptic points are always local
extrema, while local extrema generically are elliptic points.

We provide, in the special case
where $f$ is real-analytic and has an elliptic point at
$P_0\in M$, a construction of quasimodes for $T_N(f)$: we build (Theorem \ref{thr:WKB-Bottom}) a sequence of normalised sections $(v(N))_{N\geq
  1}$ and a real sequence $(\lambda(N))_{N\geq 1}$, with asymptotic expansions in decreasing powers of $N$, such
that
\[
  T_N(f)v(N)=\lambda(N)v(N)+O(e^{-cN}).
\]
The sequence $v(N)$ takes the form of a Wentzel-Kramers-Brillouin
(WKB) ansatz: it is written as
\begin{equation}\label{eq:WKB_ansatz}
  v(N):x\mapsto C N^{\frac{\dim(M)}{2}}\psi^{\otimes N}(x)(v_0(x)+N^{-1}v_1(x)+\ldots),
\end{equation}
where the \emph{symbol} $(v_k)_{k\geq 0}$ is a sequence of functions
on $M$ that are holomorphic in a neighbourhood $V$ of $P_0$,
and the \emph{phase} $\psi$ is a section over $M$, holomorphic on $V$, and decaying away from $P_0$:
\[
  \exists \epsilon>0, \forall x\in M,|\psi(x)|\leq e^{-\epsilon\dist(x,P_0)^2}.
\]
The multiplicative factor $CN^{\frac{\dim(M)}{2}}$ then ensures that $v(N)$ is
normalised.

Since $T_N(f)$ is self-adjoint, the existence of a quasimode implies that
$\lambda(N)$ is exponentially close to the spectrum of $T_N(f)$, but
not necessarily that $v(N)$ is exponentially close to an
eigenfunction. In Theorem \ref{thr:WKB-Bottom}, we also prove that, if
$f$ is Morse (all critical points have non-degenerate Hessian), the
eigenvectors associated with the lowest eigenvalue of $T_N(f)$ are
exponentially close to a finite sum of quasimodes
of the form \eqref{eq:WKB_ansatz}, attached to the elliptic points
corresponding to global minima of $f$.

\subsection{Bergman kernels and Toeplitz operators}
\label{sec:toeplitz-operators}
Let us rapidly present the basic definitions associated with semiclassical
Berezin-Toeplitz quantization as introduced in full generality in \cite{bordemann_toeplitz_1994}; see the in-depth introductions
\cite{charles_aspects_2000,le_floch_brief_2018}.

Let $(M,\omega)$ be a compact boundaryless symplectic manifold. Berezin-Toeplitz
quantization associates, to a function $f:M\to \R$, a
sequence of Toeplitz operators $(T_N(f))_{N\geq 1}$. To perform this quantization, we have to provide a supplementary geometrical information: a
complex structure $J$, which encodes a notion of holomorphic
objects on $M$, and which is compatible with $\omega$ : $(M,\omega,J)$ is a
Kähler manifold.

\begin{defn}\label{def:toep}
  Let $(M,\omega,J)$ be a compact K\"ahler manifold. Let $L$ be a complex line
  bundle over $M$, and let $h$ be a Hermitian metric on $L$  such that
  $\mathrm{curv}\,h=2i\pi \omega$. The couple $(L,h)$ exists if and only if the
  integral of $\omega$ over each closed surface in $M$ is an integer
  multiple of $2\pi$. We then say that $M$ is \emph{quantizable}. 

 Let $N\in \N$. The \emph{Bergman projector} $S_N$ is the orthogonal projector, from
  the space of square-integrable sections $L^2(M,L^{\otimes N})$ to
  the finite-dimensional subspace of holomorphic sections $H_0(M,L^{\otimes N})$.

  Let also $f:M\to \R$. The Toeplitz operator $T_N(f)$
  associated with $f$ is the following operator:
  \[
    \begin{array}{rcl}T_N(f):H_0(M,L^{\otimes N})&\to &
                                                        H_0(M,L^{\otimes
                                                        N})\\
      u&\mapsto & S_N(fu).
    \end{array}
  \]
\end{defn}
It is convenient to extend $T_N(f)$ into an operator on
$L^2(M,L^{\otimes N})$ by the formula
\[
  T_N(f)=S_NfS_N;
\]
in this way, $(T_N(f))_{N\in \N}$ is a family of finite rank self-adjoint operators.

Given a
Hilbert basis $(s_1,\ldots,s_{d_N})$ of $H_0(M,L^{\otimes N})$, the
Bergman projector $S_N$ admits the following integral kernel:
\[
  S_N(x,y)=\sum_{i=1}^{d_N}s_i(x)\otimes \overline{s_i(y)}.
\]
The study of the Bergman kernel as $N\to +\infty$ lies at the core of
the semiclassics of Toeplitz quantization. In a previous article
\cite{deleporte_toeplitz_2018}, we developed a semiclassical machinery
in real-analytic regularity, in
order to give asymptotic formulas up to an exponential remainder for $S_N$, and Toeplitz operators,
in the case where the symplectic form $\omega$ and the function $f$ are real-analytic on the
complex manifold $(M,J)$. The analysis of the Bergman kernel in
real-analytic geometry is a recent and active topic \cite{berman_direct_2008,hezari_off-diagonal_2017,rouby_analytic_2018,charles_analytic_2021,hezari_property_2021,deleporte_direct_2022}.

\begin{defn}
 \label{def:coh-state-anal} Let $(M,\omega,J)$ be a compact quantizable K\"ahler manifold and
  $(S_N)_{N\geq 1}$ be the associated sequence of Bergman projectors. Let
  $x\in M$ and $N\in \N$. The \emph{coherent state} $\psi^N_x$ at $x$ is the element of
  $H_0(M,L^{\otimes N})\otimes \overline{L}_x^{\otimes N}$ given by freezing the
  second variable of the Bergman kernel: for every $y\in M$, one has
  \[
    \psi^N_x(y)=S_N(y,x).
    \]
\end{defn}

\begin{theorem}
  \label{thr:WKB-Bottom}
Let $M$ be a quantizable compact real-analytic K\"ahler manifold.
Let $f$ be a real-analytic, real-valued function on $M$.
\begin{enumerate}
  \item Let
  $P_0\in M$ be an elliptic point of
  $f$ which is a local minimum. 
  Then there exist
  \begin{itemize}
  \item positive constants $C,c,c',R,\epsilon$,
  \item a neighbourhood $V$ of $P_0$,
  \item a holomorphic function $\varphi$ on $V$ such that
    \[\exists \epsilon\in (0,1),\forall
        x\in V,\,|\varphi(x)|\leq (1-\epsilon)\frac{d(x,P_0)^2}{2}
      ,\]
      \item a sequence of holomorphic functions $(u_k)_{k\geq 0}$ on $V$, with
        $u_0(P_0)=1$ and $u_k(P_0)=0$ for $k\neq 0$, satisfying
        \begin{equation}\label{eq:growth_symbol}
          \forall k\geq 0,\,\sup_V|u_k|\leq CR^kk!,
        \end{equation}
      \item a real sequence $(\lambda_k)_{k\geq 0}$, where
        $\lambda_0=f(P_0)$ and where $\lambda_1$
        is the ground state energy of the quantization of the Hessian of $f$ at $P_0$ (see
  \cite{deleporte_low-energy_2016}), satisfying
  \begin{equation}\label{eq:growth_eigval}
    \forall k\geq 0,\,|\lambda_k|\leq CR^kk!,
  \end{equation}
\end{itemize}
such that, for every $N\geq 1$, if $\psi^{N}_{P_0}$ denotes the coherent state at $P_0$,
  then with
  \begin{align*}
    u(N)&=\mathds{1}_V\psi^N_{P_0}e^{N\varphi}\left(\sum_{k=0}^{cN}N^{-k}u_k\right)\\
    \lambda(N)&=\sum_{k=0}^{cN}\lambda_kN^{-k},
  \end{align*}
  one has
  \[
    \left\|T_N(f)u(N)-\lambda(N)u(N)\right\|_{L^2(M,L^{\otimes
        N})}\leq Ce^{-c'N}.
  \]
  \item
  If the minimal set of $f$ consists in a finite number of
  non-degenerate minimal points, then any normalised eigenfunction of $T_N(f)$
  with minimal eigenvalue is at distance $Ce^{-c'N}$ from a linear
  combination of the functions constructed in item 1 at each
  minimal point.
\end{enumerate}
\end{theorem}
The coherent state $\psi_{P_0}^N$ has a WKB-type expansion (see
Proposition \ref{prop:Szeg-gen}), and one can recover the section
$\psi$ in \eqref{eq:WKB_ansatz} from there and $\varphi$. The analytic symbol $(v_k)_{k\geq
  0}$ is then obtained by normalising $u(N)$, an operation which preserves
the growth property \eqref{eq:growth_symbol}. Thus the expression of
$u(N)$ above implies \eqref{eq:WKB_ansatz}.

Our method of proof
consists in constructing  $\varphi$, satisfying a
Hamilton-Jacobi type equation, then solve by induction a transport
equation on the coefficients $u_k$, and finally to prove the analytic
growth controls \eqref{eq:growth_symbol} and \eqref{eq:growth_eigval}. 

The pseudodifferential equivalent of Theorem \ref{thr:WKB-Bottom} is claimed in
\cite{martinez_microlocal_1999}
, however
all details are not given: the growth property (\cref{eq:growth_symbol,eq:growth_eigval}), which is crucial
to the ability to sum until $k=cN$ terms in \eqref{eq:WKB_ansatz}, is stated without
proof. The verification of estimates of this nature is often non
trivial, and in this case, it is the subject of Propositions \ref{prop:transport} and
\ref{prop:Wells-WKB}. The purpose of this article is not merely to fix
the gap in the strategy proposed in
\cite{martinez_microlocal_1999}, but to extend it to the more general
setting of Berezin-Toeplitz operators.

Indeed, a pseudodifferential
operator on $\R^d$ with real-analytic symbol can be written
\emph{exactly} as a
Toeplitz operator on $M=\C^d$ if the symbol can be extended to a constant width strip
in $\C^d$, since the exact formula for ``Toeplitz to
pseudodifferential'' which can be found for instance in
\cite{zworski_semiclassical_2012}, formula (13.4.12), is a heat-type
evolution at time $N^{-1}=\hbar$ which can be reversed if the
pseudo-differential symbol is analytic. Hence, up to a careful check
of the behaviour at infinity which we do not carry out here, Theorem
\ref{thr:WKB-Bottom} should be enough to provide a complete proof of the result stated in
\cite{martinez_microlocal_1999}.

The Toeplitz point of view on
pseudodifferential operators is relevant for WKB eigenmode
construction and exponential estimates, both from the perspective of
physics \cite{voros_wentzel-kramers-brillouin_1989} and mathematics (at the core of analytic microlocal
analysis is the Fourier-Bros-Iagolnitzer transformation, which relates
pseudodifferential operators to Toeplitz operators). In addition, the Toeplitz setting contains other semiclassical quantum
operators such as spin systems, on which tunnelling estimates are
widely studied in the physics community
\cite{owerre_macroscopic_2015}, although not always in a rigorous way.

WKB estimates for low-energy eigenfunctions in a Morse energy
landscape are well-known for purely
electric Schrödinger operators, of the form
$-\hbar^2\Delta+V(x)$. Without analyticity assumptions on $V$, one can
construct a \emph{formal} WKB ansatz \cite{helffer_multiple_1984} : a sequence of functions of the
form
\[
  u_{\hbar}(x)\sim e^{-\frac{\varphi(x)}{\hbar}}\sum_{k=0}^{+\infty}
  \hbar^ku_k(x),
\]
(where $\sim$ denotes formal summation of
classical symbols), which is a $O(\hbar^{\infty})$-quasimode for the
first eigenvalue $\lambda_\hbar$ of the Schrödinger operator, in the
weighted norm associated with $\varphi$:
\[
  \left\|\left(-\hbar^2\Delta+V-\lambda_\hbar\right)u_{\hbar}\right\|_{L²\left(e^{-\frac{\varphi}{\hbar}}\right)}=O(\hbar^{\infty}).
\]
WKB expansions of quasimodes have been the subject of recent activity in the
context of purely magnetic Schrödinger operators, of the form
$(i\hbar\partial+A(x))²$, in an increasing order or generality
\cite{bonnaillie-noel_magnetic_2016,guedes_bonthonneau_wkb_2017,guedes_bonthonneau_exponential_2019,guedes_bonthonneau_magnetic_2020}. In this context, the symbol $f$ reaches a non-degenerate
minimum on a symplectic submanifold of $(\R^{2d},\omega_{st})$, but
subprincipal effects force the ground state to be microlocalised at
one ``miniwell'' (as in
\cite{helffer_puits_1986,deleporte_low-energy_2017}). Contrary to the electric case,
real-analyticity of the magnetic potential $A$ is, most of the time,
necessary to obtain exponential decay of the ground state away from
the miniwell : in \cite{bonnaillie-noel_magnetic_2016,guedes_bonthonneau_wkb_2017,guedes_bonthonneau_exponential_2019,guedes_bonthonneau_magnetic_2020}, one assumes real-analyticity of $A$ and
concludes in a formal WKB expansion. We conjecture that, as in Theorem
\ref{thr:WKB-Bottom}, the coefficients of these formal WKB expansions can be in fact
summed into an analytic symbol.

\begin{rem}\label{rem:Agmon}
  If the minimal set of $f$ consists in several non-degenerate wells,
  then applying the Part 1 of Theorem \ref{thr:WKB-Bottom} at every well yields that the
  actual ground state, which is exponentially close to an orthogonal
  linear combination of quasimodes as above, has Agmon-type
  exponential decay in a neighbourhood of the minimal set, as in \cite{helffer_multiple_1984}.

  Even if the function $\varphi$ can be extended to all of $M$ and yields, formally, exponential
  decay everywhere except at the minimal point, this rate of decay is blurred, not
  only by the error terms in the expression of the Bergman kernel
  (Proposition \ref{prop:Szeg-gen}) but also by the
  fact that we can only sum up to
  $cN$ with $c$ small when summing analytic symbols (see Proposition \ref{prop:anal-symb}), which
  yields a fixed error of order $e^{-c'N}$ with $c'>0$ small. This
  yields a lower bound to the decay rate for the actual ground state, as a
  function of the position, which follows the blue, continuous line in the following picture:
  \begin{center}
    \begin{tikzpicture}
       \draw [color=black] (1,1)--(-1,1);
      \draw [color=black] (4,1)--(7,1);
      \draw [style=dashed] (1,1)--(4,1);
      \draw (6.8,-0.2) node {$0$};
      \draw (6.8,0.8) node {$c'$};
      \draw [style=dashed] (-0.5,3) to [out=-70, in=135] (1,1);
      \draw [color=black] (1,1) to [out=-45, in=180] (3,0);
      \draw [color=black] (3,0) to [out=0, in=-110] (4,1);
      \draw [style=dashed] (4,1) to [out=70, in=180] (5,2);
      \draw [style=dashed] (5,2) to [out=0, in=180] (5.8,1.7);
      \draw [style=dashed] (5.8,1.7) to [out=0,in=-110] (7,3);
      \draw (5.7,2.7) node {$\mathsmaller{-\log\left|\psi_{P_0}e^{\varphi}\right|}$};
      \draw [style=dashed] (-1,0)--(7,0);
      \filldraw (3,0) circle (2pt);
      \draw (3.3,-0.3) node {$P_0$};
    \end{tikzpicture}
  \end{center}
  The solid line above is our best estimate for a function $g:M\to
  [0,+\infty)$ such that the ground state $v_N$ of $T_N(f)$ satisfies
  \[
    \forall \epsilon>0,\,\exists C>0, \forall x\in M, \forall N\in \mathbb{N},
    |v_N(x)|\leq Ce^{-(g(x)-\epsilon)N}.
    \]
  
  Near $P_0$, the rate of decay is sharp, but we have
  no explicit control on the constant $c'$.
\end{rem}

Theorem \ref{thr:WKB-Bottom} has applications to tunnelling between
(locally) symmetric wells, in the spirit of 
\cite{helffer_puits_1985}. In Proposition \ref{prop:tunn-lower} we prove that, if $f$ has
two symmetrical wells, and $\lambda_0,\lambda_1$ denote the two first
eigenvalues of $T_N(f)$ (with multiplicity), then
\begin{equation}\label{eq:gap}
  \lambda_1-\lambda_0\leq Ce^{-c'N},
\end{equation}
where $c'$ and $C$ are as in Theorem \ref{thr:WKB-Bottom}. In the physics
community, the tunnelling rate $-N^{-1}\log(\lambda_1-\lambda_0)$ is
often estimated using the degree zero approximation $\varphi$ in the WKB ansatz,
which solves a Hamilton-Jacobi equation (see Proposition
\ref{prop:Hamilton-Jacobi-sol}). However, in Proposition
\ref{prop:tunn-upper}, we provide a series of examples which tend to
illustrate that the tunnelling rate is not given by $\varphi$, and is
not bounded from above by the best possible constant $c'$ in
Theorem \ref{thr:WKB-Bottom}. This contrasts with the case of an
electric Schrödinger operator, where it is well-known that the tunnelling
rate corresponds to the behaviour of the Hamilton-Jacobi equation, as
detailed in
\cite{helffer_puits_1985}. The difference between the two cases is the
ability to extend the problem ``far away'' into the complex, and in
particular, to prove sharp exponential decay, as we explained in Remark
\ref{rem:Agmon}.

Let us now discuss possible alternative strategies for the proof of
Theorem \ref{thr:WKB-Bottom}. The method we follow is the most direct
one, inspired from the $C^{\infty}$ case, and proceeds by a sequence of
perturbations of the Toeplitz operator with a quadratic symbol
corresponding to the Hessian (for which the ground state is
explicit). The necessary verification that the terms of the
perturbation sum into an analytic symbol, i.e. controls
\eqref{eq:growth_symbol} and \eqref{eq:growth_eigval}, occupies
most of the proof.

In some situations, it might be easier to prove that one can conjugate
$T_N(f)$ (microlocally and up to an exponentially small error) into an
operator for which the eigenfunctions are explicit, such as a
quadratic operator. One can hope to do so when $M$ has complex
dimension 1, or more generally for integrable systems near elliptic points. This fact is
used, for instance, in appendix B of
\cite{helffer_semi-classical_1989} concerned with pseudo-differential
operators on $\R$, and leads to a result similar to
Theorem \ref{thr:WKB-Bottom}, with a shorter and simpler
proof. In this integrable case, if one can build a quantum
action-angle theorem near an elliptic point in the analytic category
(which remains to be done), one could describe all eigenfunctions and
eigenvectors modulo an exponentially small error, not just the ground
state.

Apart from the complete integrability assumption, there is hope that KAM-like theorems can be of use,
and more precisely, that under a suitable genericity assumption
on the symplectic diagonalisation of the Hessian, a Birkhoff normal
form near the elliptic point is enough to describe the spectrum, but
it is not clear whether it would provide a simpler proof of Theorem \ref{thr:WKB-Bottom}.

\subsection{Outline}
\label{sec:outline}

In Section \ref{sec:calc-analyt-toepl} we briefly present the tools
which we developed in \cite{deleporte_toeplitz_2018} to tackle problems from semiclassical
analysis in real-analytic regularity in the context of
Berezin-Toeplitz quantization. We then proceed to the proof of
Theorem \ref{thr:WKB-Bottom}.

Section \ref{sec:geometry-wkb-ansatz}
recalls the geometrical ingredients required in order to build a
\emph{formal} WKB ansatz, that is, for every $K\in \N$, a quasimode of the form \eqref{eq:WKB_ansatz}, where there are $K$
terms inside the parenthesis and which satisfies the eigenvalue
equation up to $O(N^{-K-2})$. Each of the coefficients $u_k$ solves a
transport equation, with a source term depending on
$u_0,\ldots,u_{k-1}$. The main novel result of Section
\ref{sec:geometry-wkb-ansatz} is a control the solution of this transport
equation, in an analytic norm, by the source term.

In Section \ref{sec:almost-eigenvectors}, we prove that the
sequences $(u_k)_{k\geq 0}$ and $(\lambda_k)_{k\geq 0}$ 
belong to an analytic class. In particular, they satisfy the growth
condition (\cref{eq:growth_symbol,eq:growth_eigval}).
This allows us to perform an analytic summation and produce a sequence
of sections (indexed by $N$)
which satisfy the eigenvalue equation for $T_N(f)$ up to
$O(e^{-c'N})$, for some $c'>0$. A standard analysis of the
distribution of low-lying eigenvalues of $T_N(f)$ allows us to conclude
the proof in Section \ref{sec:spectr-estim-at}, where we also discuss
the constant $c'$ in the statement of Theorem \ref{thr:WKB-Bottom}. 



\section{Calculus of analytic Toeplitz operators}
\label{sec:calc-analyt-toepl}

To be able to prove the growth condition (\cref{eq:growth_symbol,eq:growth_eigval}), we use the framework developed in a previous article
\cite{deleporte_toeplitz_2018}, which allowed us to study Toeplitz operators with real-analytic
regularity.

Given two real parameters $r>0,m$, we say that a function $f:U\to \C$ on a smooth
open set $U$ of $\R^d$ belongs to the space $H(m,r,U)$ when there
exists $C>0$ such that, for every
$j\geq 0$, one has
\[
  \|f\|_{C^j(U)}:=\sup_{x\in U}\sum_{|\mu|=j}|\partial^{\mu}f(x)|\leq C\cfrac{r^jj!}{(j+1)^m}.
\]

The minimal $C$ such that the control above is true is a Banach norm
for the space $H(m,r,U)$. Such functions are
real-analytic, and can be extended as holomorphic functions in an tube
of radius proportional to $r^{-1}$ around $U$. Reciprocally, by the Cauchy integral
formula, for all $V\subset\subset U$, every
real-analytic function on $U$ belongs to $H(m,r,V)$, for all $m\in \R$
and for some $r>0$ depending on $\dist(V,\R^d\setminus U)$ and the
radius of analyticity of the function near $V$ (see
\cite{deleporte_toeplitz_2018}, Proposition 2.15).

We will often use, in this article, the pointwise version of the $C^j$
seminorm above:
\[
\|\nabla^j f(x)\|_{\ell^1}:=\sum_{|\mu|=j}|\partial^{\mu}f(x)|.
\]
Generalising the definition of $H(m,r,U)$, we obtain \emph{analytic
  (formal) symbols}.

\begin{defn}\label{def:anal-symb}Let $X$ be a compact real-analytic
  manifold, with real-analytic boundary. We fix a finite set
  $(\rho_V)_{V\in \mathcal{V}}$ of local real-analytic charts on open sets $V$ which
  cover $X$.
  \begin{itemize}
    \item 
  Let $j\geq 0$. The $C^j$ seminorm of a function $f:X\to \C$
  which is continuously differentiable $j$ times is defined as
  \[
    \|f\|_{C^j(X)}=\max_{V\in \mathcal{V}}\|f\circ \rho_V\|_{C^j(V)}=\max_{V\in \mathcal{V}}\sup_{x\in
      V}\sum_{|\mu|=j}|\partial^{\mu}(f\circ \rho_V)(x)|.
  \]

  \item Let $r, R,m$ be positive real numbers. The space of analytic symbols
  $S^{r,R}_m(X)$ consists of sequences $(a_k)_{k\geq 0}$ of real-analytic
  functions on $X$, such that there exists $C\geq 0$ such that, for
  every $j\geq 0, k\geq 0$, one has
  \[
    \|a_k\|_{C^j(X)}\leq C\cfrac{r^jR^k(j+k)!}{(j+k+1)^m}.
  \]

  The norm of an element $a\in S^{r,R}_m(X)$ is defined as the
  smallest $C$ as above; then $S^{r,R}_m(X)$ is a Banach space.
\end{itemize}
\end{defn}
The definition of $S^{r,R}_m(X)$ depends on the chosen atlas, but not in
an essential way: elements of $S^{r,R}_m(X)$ for a given atlas belong
to $S^{r',R'}_{m'}(X)$ for another atlas, with $r',R',m'$ suitably
chosen as a function of $r,R,m$ and the two atlases. 

These analytic classes, which we defined and studied in \cite{deleporte_toeplitz_2018}, are
well-behaved with respect to standard manipulations of functions
(multiplication, change of variables, ...) and, most importantly, with
respect to the stationary phase lemma. Another important property is
the summation of such symbols: if $\hbar$ is a semiclassical parameter
(here $\hbar=N^{-1}$), then for $c>0$ small depending on $R$, the sum
\[
  \sum_{k=0}^{c\hbar^{-1}}\hbar^ku_k
\]
is uniformly bounded as $\hbar\to 0$; in this sum, terms of order
$k=\hbar^{-1}$ are exponentially small, so that the choice of
$c$ has an exponentially small influence on the sum.

\begin{prop}\label{prop:anal-symb}[See \cite{deleporte_toeplitz_2018},
  Propositions 3.6 and 3.8] Let $X$ be a compact real-analytic
  manifold with boundary and fix a real-analytic atlas on $X$.
  \begin{description}
  \item[\hspace{1em}Summation]Let
    $f\in S^{r,R}_m(X)$. Let $c_R=\frac{e}{3R}$. Then
    \begin{enumerate}
    \item The function
      \[
        f(N):x\mapsto \sum_{k=0}^{c_RN}N^{-k}f_k(x)
      \]
      is bounded on $X$ uniformly for $N\in \N$.
    \item For every $0<c_1<c_R$, there exists $c_2>0$ such that
      \[
        \sup_{x\in
          X}\left|\sum_{k=c_1N}^{c_RN}N^{-k}f_k(x)\right|=O(e^{-c_2N}).
        \]
    \end{enumerate}
  \item[\hspace{1em}Cauchy product] There exists $C_0\in \R$ such that
  the following is true.

  Let $r,R\geq
  0$ and $m\geq 4$. For $a,b\in S^{r,R}_m(X)$, let us define
  the Cauchy product of $a$ and $b$ as
  \[
    (a*b)_k=\sum_{i=0}^ka_ib_{k-i}.
    \]
    \begin{enumerate}
      \item
    The space $S^{r,R}_m(X)$ is an algebra for this Cauchy product, that
    is,
    \[
      \|a* b\|_{S^{r,R}_m}\leq
      C_0\|a\|_{S^{r,R}_m}\|b\|_{S^{r,R}_m},
    \]
      Moreover, there exists $c>0$ depending only on $R$ such that
    as $N\to +\infty$, one has
    \[
      (a*b)(N)=a(N)b(N)+O(e^{-cN}).
    \]

    \item Let $r_0,R_0,m_0$ positive and $a\in S^{r_0,R_0}_{m_0}(X)$ with $a_0$
    nonvanishing. Then, for every $m$ large enough depending on $a$,
    for every $r\geq r_02^{m-m_0},R\geq R_02^{m-m_0}$, $a$ is invertible (for the Cauchy product) in
    $S^{r,R}_m(X)$, and its inverse $a^{* -1}$ satisfies:
    \[
      \|a^{* -1}\|_{S^{r,R}_m(X)}\leq 2\min(|a_0|)^{-4}\|a\|^3_{S^{r_0,R_0}_{m_0}(X)}.
    \]
  \end{enumerate}
  
  \end{description}
\end{prop}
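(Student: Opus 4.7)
Both items follow from Stirling's formula. Substituting the definition of $S^{r,R}_m$, one has $|N^{-k}f_k(x)|\leq \|f\|_{S^{r,R}_m} (Rk/(eN))^k$ up to polynomial corrections in $k$. For $k\leq c_R N = eN/(3R)$ the base is at most $1/3$, giving a uniformly summable geometric series for item 1. For item 2, on the compact interval $[c_1,c_R]\subset (0,e/R)$ the continuous function $t\mapsto (Rt/e)^t$ is strictly less than $1$ and hence attains a maximum $\rho<1$; each term in the tail sum is then at most $\rho^N$ times subexponential factors, yielding an exponentially small bound after summing the $O(N)$ terms.

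\textbf{Cauchy product algebra and exponential closeness.} Expand $\|(a*b)_k\|_{C^j}\leq \sum_i \|a_i b_{k-i}\|_{C^j}$, apply the Leibniz rule to each summand, and substitute the defining bounds for $a,b$. This reduces the estimate to controlling a double sum indexed by $(i,\ell)\in[0,k]\times [0,j]$ whose combinatorial weight can be regrouped using the factorization $\binom{j}{\ell}(\ell+i)!(j-\ell+k-i)! = j!\,i!(k-i)!\binom{\ell+i}{i}\binom{j-\ell+k-i}{k-i}$, together with the Vandermonde identity $\sum_\ell \binom{\ell+i}{i}\binom{j-\ell+k-i}{k-i}=\binom{j+k+1}{k+1}$ and the boundedness of $\sum_i 1/\binom{k}{i}$. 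The weight $(\ell+i+1)^{-m}(j-\ell+k-i+1)^{-m}$ must then be split: one portion reconstructs the target denominator $(j+k+1)^{-m}$, while the remaining decay ensures summability of the double sum; the hypothesis $m\geq 4$ is precisely what makes this balance work with a constant $C_0$ independent of $r,R,m$. The exponential-closeness statement $(a*b)(N)=a(N)b(N)+O(e^{-cN})$ follows by comparing the truncated sums: their difference consists of pairs $(i,j)$ with $c_R N<i+j\leq 2c_RN$, which are $O(e^{-cN})$ by the Summation estimate applied with $c_1=c_R$.

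\textbf{Inversion, and main obstacle.} Write $a = a_0(1+c)$ with $c_0=0$ and $c_k = a_k/a_0$ for $k\geq 1$. The formal Neumann series $a^{*-1}=(1/a_0)*\sum_{n\geq 0}(-c)^{*n}$ terminates at each finite order $k$ because $c_0=0$ forces $(c^{*n})_k=0$ for $n>k$. Using the algebra bound from item 1, norm convergence in $S^{r,R}_m$ reduces to arranging $\|c\|_{S^{r,R}_m}<1/C_0$. This is achieved by embedding $S^{r_0,R_0}_{m_0}\hookrightarrow S^{r,R}_m$, valid for $r\geq r_0 2^{m-m_0}$ and $R\geq R_0 2^{m-m_0}$ by the elementary inequality $(n+1)^{m-m_0}\leq C\cdot 2^{(m-m_0)n}$. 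Because $c_0=0$, the norm of $c$ in the rescaled space absorbs an extra factor $R_0/R$ coming from the rescaling of positive-order coefficients, and this factor is made as small as needed by taking $m$ large, which gives the claimed bound on $\|a^{*-1}\|_{S^{r,R}_m}$ in terms of $\|a\|_{S^{r_0,R_0}_{m_0}}$ and $\min|a|$ (the latter controlling $\|1/a_0\|$). The genuine technical obstacle in the whole proposition is the combinatorial balance in the Cauchy product estimate: pinning down the sharp $m\geq 4$ threshold and keeping $C_0$ independent of $r,R,m$ requires exploiting the polynomial decay carefully, and this same balance is what forces the rescaling $r\geq r_0 2^{m-m_0}$, $R\geq R_0 2^{m-m_0}$ in the inversion step. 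The Summation argument and the Neumann series convergence are then relatively standard.
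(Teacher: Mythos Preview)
First, note that this paper does not actually prove Proposition~\ref{prop:anal-symb}: it is quoted from the companion paper \cite{deleporte_toeplitz_2018}, so there is no ``paper's own proof'' here to compare against directly. I will therefore assess your sketch on its own merits and against how the result is \emph{used} later in this paper.

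Your Summation argument via Stirling is standard and correct. For the exponential closeness of $(a*b)(N)$ and $a(N)b(N)$, your identification of the discrepancy as the terms with $c_RN<i+j\le 2c_RN$ is right; strictly speaking you cannot invoke item~2 with $c_1=c_R$, but the Stirling bound applies verbatim on the whole range $(c_RN,2c_RN]$ since $2c_R<e/R$, so this is only a cosmetic slip. Your Cauchy-product estimate via the factorisation $\binom{j}{\ell}(\ell+i)!(j-\ell+k-i)!=j!\,i!(k-i)!\binom{\ell+i}{i}\binom{j-\ell+k-i}{k-i}$ and Vandermonde is a nice alternative to the direct weight-splitting in the reference, though the sentence ``one portion reconstructs the target denominator while the remaining decay ensures summability'' hides all of the work; this can be made rigorous, but as written it is only a plan.

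The genuine gap is in the inversion step. You argue that after embedding $S^{r_0,R_0}_{m_0}\hookrightarrow S^{r,R}_m$ (for $r\ge r_0 2^{m-m_0}$, $R\ge R_0 2^{m-m_0}$), the condition $c_0=0$ buys an \emph{extra} factor $R_0/R$, which you then send to zero by taking $m$ large. This double-counts the rescaling: the factor $(R_0/R)^k$ is precisely what the embedding already consumes to absorb the growth $(j+k+1)^{m-m_0}$. Concretely, at $(j,k)=(0,1)$ with the minimal choice $R=R_0\,2^{m-m_0}$ one finds
\[
\frac{\|c_1\|_{C^0}\,(0+1+1)^m}{R\cdot 1!}
\;\ge\;
\frac{\|c_1\|_{C^0}\,2^{m_0}}{R_0}\cdot\frac{2^{m-m_0}R_0}{R}
\;=\;\frac{\|c_1\|_{C^0}\,2^{m_0}}{R_0},
\]
which is a fixed positive number independent of $m$. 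So $\|c\|_{S^{r,R}_m}$ is \emph{not} made small by taking $m$ large, and $C_0\|c\|<1$ cannot be arranged this way. The mechanism actually used in \cite{deleporte_toeplitz_2018} (and visible throughout the present paper whenever ``Lemma~2.12'' is invoked, e.g.\ in the proofs of Propositions~\ref{prop:transport} and~\ref{prop:Wells-WKB}) is different: the \emph{interior} part of the convolution sum --- the terms avoiding both endpoints --- carries a factor of order $(3/4)^m$, and it is this decay of the effective product constant (not smallness of $\|c\|$) that makes the Neumann series contract once $m$ is large enough depending on $a$. Your argument needs to be rewritten around that idea.
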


\begin{rem}\label{rem:alt_symb_class}
A variant of Definition \ref{def:anal-symb} reads
\[
  \|a_k\|_{C^j}\leq C\frac{r^jR^kj!k!}{(j+k+1)^m};
\]
ultimately, the controls on the symbol $(u_k)_{k\geq 0}$ in Theorem
\ref{thr:WKB-Bottom} will take a mixed form between this and $S^{r,R}_m$, see \eqref{eq:ctrl_u_4.2}
and \eqref{eq:ctrl_u_sharp_4.2}. Other definitions can be found in the
literature, as in
\cite{boutet_de_monvel_pseudo-differential_1967}, equation (1.2), or
\cite{sjostrand_singularites_1982}, chapter 1.
These alternative definitions of analytic symbol spaces are all morally
equivalent (they can be embedded into each other by changing the
values of $r$,$R$,$m$). In practice, one has to choose the convention
which suits the particular combinatorial arguments.
\end{rem}

The summation property in Proposition \ref{prop:anal-symb}, together with the stationary phase lemma,
allows us to study Toeplitz operators up to an exponentially small
error. One of the main results of \cite{deleporte_toeplitz_2018},
proved independently \cite{rouby_analytic_2018}, then simplified in \cite{charles_analytic_2021,deleporte_direct_2022,hezari_property_2021}, is an
expansion of the Bergman kernel on a real-analytic K\"ahler manifold,
with error $O(e^{-c'N})$, in terms of an analytic symbol.
\begin{prop}\label{prop:Szeg-gen}(See \cite{deleporte_toeplitz_2018},
  Theorem A, and \cite{rouby_analytic_2018}, Theorem 3.1)
   Let $M$ be a quantizable compact real-analytic K\"ahler manifold of
  complex dimension $d$. There
  exists positive constants $r,R,m,c,c',C$, a neighbourhood $U$ of the
  diagonal in $M\times M$, a section $\Psi$ of $L\boxtimes \overline{L}$
  over $U$, and an analytic symbol
  $a\in S^{r,R}_m(U)$, holomorphic in the first variable,
  anti-holomorphic in the second variable, such that the Bergman
  kernel $S_N$ on $M$ satisfies, for each $x,y\in M\times M$ and
  $N\geq 1$:
  \[
    \left|S_N(x,y)-\mathds{1}_{(x,y)\in U}\Psi^{\otimes N}(x,y)\sum_{k=0}^{cN}N^{d-k}a_k(x,\overline{y})\right|\leq
    C e^{-c'N}.
    \]
  \end{prop}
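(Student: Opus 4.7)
The plan is to construct $\Psi$ and the analytic symbol $a$ locally near the diagonal via a real-analytic parametrix construction, and then identify the resulting quasi-projector with $S_N$ modulo exponentially small errors by off-diagonal decay and uniqueness.

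\textbf{Phase.} Near any point $p\in M$, real-analyticity of the K\"ahler form provides a local real-analytic K\"ahler potential $\phi(z,\bar z)$. By polarisation it admits a unique extension $\tilde\phi(z,\bar w)$, holomorphic in $z$ and anti-holomorphic in $w$, with $\tilde\phi(z,\bar z)=\phi(z,\bar z)$. In a local trivialisation of $L$ where $h$ is represented by $e^{-\phi}$, I define $\Psi$ to be represented by $e^{-\tilde\phi(z,\bar w)}$. Then $\Psi$ is holomorphic in the first variable and anti-holomorphic in the second, and its pointwise norm satisfies $|\Psi|_{h\boxtimes\bar h}^2=e^{-D(x,y)}$, where $D\geq 0$ is the Calabi diastasis; it vanishes to second order on the diagonal and obeys $D(x,y)\geq c\,d(x,y)^2$.

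\textbf{Formal symbol.} I would construct the $a_k$ inductively by imposing the reproducing formula $\int K_N(\cdot,y)s(y)\,\mathrm{d}\mathrm{vol}(y)=s(\cdot)+O(N^{-\infty})$ on holomorphic $s$, with $K_N=\Psi^{\otimes N}\sum N^{d-k}a_k$. Applied on a complexified contour, real-analytic stationary phase at the unique non-degenerate critical point (non-degeneracy comes from strict plurisubharmonicity of $\phi$) converts this requirement into a triangular recursion $La_k=R_k[a_0,\ldots,a_{k-1}]$, where $L$ is an elliptic holomorphic differential operator on holomorphic/anti-holomorphic germs near the diagonal. Solving it yields $a_0=\pi^{-d}\det(\partial\bar\partial\phi)$ and determines the $a_k$ uniquely.

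\textbf{Analytic control.} The main obstacle is to check that the iteratively produced sequence $(a_k)_{k\geq 0}$ lies in $S^{r,R}_m(U)$ for some fixed $r,R,m$, instead of having a faster-than-factorial growth. This is where the real-analytic stationary phase lemma with explicit factorial constants of \cite{deleporte_toeplitz_2018} is essential: each application of the recursion is one application of this lemma plus Cauchy estimates on a slightly shrunken polydisc. A Nagumo-type induction, keeping track of the shrinkage of successive domains of holomorphy, then yields bounds of the form $\|a_k\|_{C^j(U)}\leq Cr^jR^k(j+k)!/(j+k+1)^m$ in the class $S^{r,R}_m(U)$.

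\textbf{Patching and conclusion.} Using the summation statement of Proposition~\ref{prop:anal-symb}, I sum the analytic symbol up to $cN$ to obtain, in each Bochner-type chart, a local candidate $\widetilde S_N$ that reproduces holomorphic sections modulo $O(e^{-c'N})$. Two such local candidates agree on overlaps up to $O(e^{-c'N})$, because the formal symbol is uniquely determined and two admissible resummations of the same analytic symbol differ by exponentially small terms. Off-diagonal exponential decay of $S_N$ itself—which follows from Agmon-type estimates based on the Kodaira--H\"ormander $L^2$ machinery, sharpened to an exponential rate in the real-analytic category—then forces $S_N-\widetilde S_N=O(e^{-c'N})$ pointwise on a neighbourhood $U$ of the diagonal, giving the claimed global bound.
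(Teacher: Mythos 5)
You should first note that the paper does not prove Proposition \ref{prop:Szeg-gen}: it is imported verbatim from Theorem A of \cite{deleporte_toeplitz_2018}, so there is no internal proof to compare against, and your attempt has to be judged against the strategy of that reference. On that basis, your first three steps (polarised phase $e^{-\tilde\phi(z,\bar w)}$ as the local model for $\Psi$, a formal symbol determined by an approximate reproducing property via analytic stationary phase, and factorial control of the $a_k$) are a reasonable sketch of the known route. Be aware, though, that your ``analytic control'' paragraph names rather than resolves the technical core: the whole point of the classes $S^{r,R}_m$, the extra parameter $m$, and the stationary phase lemma with uniform factorial constants in \cite{deleporte_toeplitz_2018} is precisely to make such an induction close without losing an unbounded factor at each step, so ``a Nagumo-type induction\ldots then yields bounds'' is the part of the argument that actually carries the weight.

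The genuine gap is your concluding step. You derive $S_N-\widetilde S_N=O(e^{-c'N})$ from ``off-diagonal exponential decay of $S_N$\ldots sharpened to an exponential rate in the real-analytic category'', treating that sharpening as an available black box. It is not: the Agmon/Kodaira--H\"ormander $L^2$ estimates in the smooth category give off-diagonal decay only of order $e^{-c\sqrt{N}\,d(x,y)}$, and upgrading this to $O(e^{-cN})$ at fixed distance for real-analytic metrics is a result of essentially the same depth as the proposition itself---in the cited work the exponential off-diagonal decay is a \emph{consequence} of the near-diagonal analytic parametrix, not an input. The non-circular route is to show that the locally resummed kernel $\widetilde S_N$ reproduces holomorphic sections up to $O(e^{-c'N})$ and then compare it with $S_N$ through the exact reproducing property combined with H\"ormander $\overline{\partial}$-estimates (correcting the locally defined approximate coherent states into global holomorphic sections with exponentially small error); this yields the pointwise bound near the diagonal without any a priori decay of $S_N$. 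Moreover, even if one granted exponential off-diagonal decay of both kernels, that only controls $S_N-\widetilde S_N$ away from the diagonal, so your ``forces\ldots pointwise on a neighbourhood of the diagonal'' does not follow as stated; the comparison argument near the diagonal is exactly what is missing.
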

Note that the constants $c,c',C$ here are different from that of Theorem \ref{thr:WKB-Bottom}.
  
Similar ideas appear in the literature, and have been successfully applied to the
theory of pseudodifferential operators with real-analytic
symbols. Early results \cite{boutet_de_monvel_pseudo-differential_1967} use a special case of our analytic
classes, when $m=0$; from there, a more geometrical theory of analytic
Fourier Integral operators was developed \cite{sjostrand_singularites_1982}, allowing one to
gradually forget about the parameters $r$ and $R$ when applying the
analytic stationary phase lemma. It is surprising
that the introduction of the parameter $m$, which mimics the
definition of the Hardy spaces on the unit ball, was never considered,
although it simplifies the manipulation of analytic functions (for
instance, the
space $H(m,r,V)$ is stable by product if and only if $m\geq 3$). In
\cite{deleporte_toeplitz_2018} and in the present article, it is
crucial that we are able to choose $m$ arbitrarily large.

Along with the definition of symbol classes in Definition
\ref{def:anal-symb}, we will use another analytic symbol class, which
is a mixture of Definition \ref{def:anal-symb} and Remark
\ref{rem:alt_symb_class}. The basic remark is that, by the Stirling formula,
\[
  \frac{(2k)!}{4^kk!k!}\sqrt{2k+1}\in \left[\tfrac 12,1\right],
\]
and in particular, the following classes of symbols are well-behaved:
\[
  \|a_k\|_{C^j}\leq C_a
  \begin{cases}
    \cfrac{r^jR^kj!k!}{(j+k+1)^m}&\text{ if }j<k\\
    \vphantom{a}&\\
    \cfrac{(r/4)^jR^k(j+k)!}{(j+k+1)^{m-\frac 12}}&\text{ if }j\geq k.
  \end{cases}
\]

We end this section with a technical lemma, which is a refinement of
Lemma 4.6 appearing in \cite{deleporte_toeplitz_2018}, adapted to the
symbol class above.

\begin{lem}\label{lem:new_lemma_4.6}
  Let $U,V,\Lambda$ be domains in $\C^d$ containing $0$. Let
  $\kappa_{\lambda}$ be a biholomorphism from $V$ with image contained
  in $U$, with
  real-analytic dependence on $\lambda\in \Lambda$ and suppose that
  $\kappa_0(0)=0$.

  Let $\kappa:(\lambda,v)\mapsto \kappa_{\lambda}(v)$ and suppose that
  there exists constants $C_{\kappa}, r_0,m_0$ such that, for all
  $j\in \N_0$, one has
  \[
    \|\kappa\|_{C^j(V\times \Lambda)}\leq
    C_{\kappa}\frac{r_0^jj!}{(j+1)^{m_0}}.
  \]
  Then the following is true for all $m\geq m_0$ and all $r\geq r_02^{m-m_0+5}$.
  Let $f$ be a real-analytic function on $U\times \Lambda$ and suppose
  that there exists $C_f$ and $k\geq 0$ such that, for all $j\in \N_0:=\{0,1,2,\ldots\}$,
  \[
    \|\nabla^jf(0,0)\|_{\ell^1}\leq C_f\frac{r^jj!k!}{(j+k+1)^m}
  \]
  and furthermore, for all $j\geq k$,
  \[
    \|\nabla^jf(0,0)\|_{\ell^1}\leq C_f\frac{(r/4)^j(j+k)!}{(j+k+1)^{m-\frac
        12}}.
  \]
  Let $n\leq k$ and $i\leq 2n$. Let $\nabla^i_v$ denote the $i$-th
  gradient over the first set of variables, acting on $V\times
  \Lambda$; then
  \[
    g\mapsto(\lambda\mapsto
    \nabla_v^ig(\kappa_{\lambda}(v),\lambda)_{v=0})\]
  is a differential operator of degree $i$ acting on functions on
  $U\times \Lambda$. Let $(\nabla^i_{\kappa})^{[\leq n]}$ denote the
  truncation of this differential operator to a differential operator
  of degree less or equal to $n$.
  Then, with
  \[
    \gamma=16C_{\kappa}r
  \]
  one has, for every $j\geq 0$,
  \[
    \|\nabla^j(\nabla^i_{\kappa})^{[\leq n]}f(0)\|_{\ell^1}\leq
    i^{d+1}j^{d+1}\gamma^iC_f\frac{r^{j+i}k!}{(i+j+k+1)^{m-\frac 12}} \begin{cases}(i+j)!&\text{ if }i\leq n\\
      \max((n+j)!(i-n)!,j!i!)&\text{ otherwise,}
    \end{cases}
  \]
  and, for every $j\geq k-\min(i,n)$,
  \begin{multline*}
    \|\nabla^j(\nabla^i_{\kappa})^{[\leq n]}f(0)\|_{\ell^1}\leq
    i^{d+1}j^{d+1}\gamma^iC_f\frac{(r/4)^{j+i}k!}{(i+j+k+1)^{m}}\\
    \times \begin{cases}(i+j+k)!&\text{ if }i\leq n\\
      \max((n+j+k)!(i-n)!,(j+k)!i!)&\text{ otherwise.}
    \end{cases}
  \end{multline*}
\end{lem}
\begin{proof}
  We proceed as in Lemma 4.6 in
  \cite{deleporte_toeplitz_2018}. By the Faà di Bruno formula, one has
  \begin{multline}\label{eq:Faa}
    \|\nabla^j((\nabla^i_{\kappa})^{[\leq n]}f)(0)\|_{\ell^1}\leq
    i^dj^d\\ \times
    \sum_{|P|=1}^{\min(n,i)}\sum_{\substack{e_0+\cdots+e_{|P|}=j\\s_1+\cdots+s_{|P|}=|P|}}\frac{j!}{e_0!e_1!\cdots
      e_{|P|}!}\frac{i!}{(|P|)!s_1!\cdots
      s_{|P|}!}\|\nabla^{|P|+e_0}f(0)\|_{\ell^1}\prod_{i=1}^{|P|}\|\kappa\|_{C^{s_i+e_i}}.
  \end{multline}
  We now inject the controls on $f$ and $\kappa$. First of all, for
  all $j_1\in \N_0$,
  \[
    \|\kappa\|_{C^{j_1}}\leq C\frac{(r/32)^{j_1}j_1!}{(j_1+1)^m},
  \]
  and in particular, if $j_1\geq 1$,
  \[
    \|\kappa\|_{C^{j_1}}\leq C\frac{(r/16)^{j_1}(j_1-1)!}{j_1^m}
  \]
  since $2^{j_1}\geq j_1$.

  Injecting this along with the control on $f$, the general term in the sum
  \eqref{eq:Faa}
  is bounded by
  \begin{multline*}
    \frac{j!i!r^{|P|+e_0}(r/16)^{i+j-e_0}(|P|+e_0)!k!(s_1+e_1-1)!\cdots
      (s_{|P|}+e_{|P|}-1)!}{(|P|)!e_0!\cdots e_{|P|}!s_1!\cdots
      s_{|P|}!}\\ \times \frac{1}{(|P|+e_0+k+1)^m(s_1+e_1)^m\cdots (s_{|P|}+e_{|P|})^m}
  \end{multline*}
  and, if $|P|+e_0\geq k$, there holds the more precise bound
  \begin{multline*}
    \frac{j!i!(r/4)^{|P|+e_0}(r/4)^{i+j-e_0}(|P|+e_0+k)!(s_1+e_1-1)!\cdots
      (s_{|P|}+e_{|P|}-1)!}{(|P|)!e_0!\cdots e_{|P|}!s_1!\cdots
      s_{|P|}!}\\ \times \frac{1}{(|P|+e_0+k+1)^{m-\frac 12}(s_1+e_1)^m\cdots (s_{|P|}+e_{|P|})^m}.
  \end{multline*}
  The constraints on $(s_j)$ and $(e_j)$ are such that one can
  simplify the second factors:
  \begin{align*}
    \frac{1}{(|P|+e_0+k+1)^m(s_1+e_1)^m\cdots (s_{|P|}+e_{|P|})^m}&\leq
    \frac{1}{(i+j+k+1)^m}\\
    \frac{1}{(|P|+e_0+k+1)^{m-\frac 12}(s_1+e_1)^m\cdots (s_{|P|}+e_{|P|})^m}&\leq
    \frac{1}{(i+j+k+1)^{m-\frac 12}}.
  \end{align*}
  Moreover, by Lemma 2.5 in \cite{deleporte_toeplitz_2018},
  \[
    \frac{(s_1+e_1-1)!(s_{|P|}+e_{|P|}-1)!}{s_1!\cdots
      s_{|P|}!e_1!\cdots e_{|P|}!}\leq
    \frac{(i-|P|+j-e_0)!}{(i-|P|+1)!(j-e_0)!}.
  \]
  Thus, one has the following general bound on the general term of the
  sum in \eqref{eq:Faa}:
  \[
    C_f(C_{\kappa})^{|P|}\frac{j!i!r^{|P|+e_0}(r/16)^{i+j-e_0}(|P|+e_0)!k!(i-|P|+j-e_0)!}{(|P|)!e_0!(i-|P|+1)!(j-e_0)!}\frac{1}{(i+j+k+1)^{m}},
  \]
  and, provided $|P|+e_0\geq k$, the more precise bound
  \[
    C_f(C_{\kappa})^{|P|}\frac{j!i!(r/4)^{|P|+e_0}(r/16)^{i+j-e_0}(|P|+e_0+k)!(i-|P|+j-e_0)!}{(|P|)!e_0!(i-|P|+1)!(j-e_0)!}\frac{1}{(i+j+k+1)^{m-\frac
        12}}.
  \]
  In both cases, one can isolate
  \[
    \frac{i!}{(|P|)!(i-|P|+1)!}\leq 2^i
  \]
  and
  \[
    \frac{(i-|P|+j-e_0)!}{(j-e_0)!}\leq 2^{i+j-e_0}(i-|P|)!;
  \]
  thus, the general bound simplifies into
  \[
    C_f\frac{(C_{\kappa})^{|P|}r^{|P|}}{8^{j-e_0}4^i}\frac{r^{i+j}(|P|+e_0)!k!(i-|P|)!j!}{e_0!}\frac{1}{(i+j+k+1)^m},
  \]
  and the specific bound into
  \[
    C_f\frac{(C_{\kappa})^{|P|}r^{|P|}}{2^{j-e_0}}\frac{(r/4)^{i+j}(|P|+e_0+k)!(i-|P|)!j!}{e_0!}\frac{1}{(i+j+k+1)^{m-\frac
        12}}.
    \]
  
  Let us now count the number of terms. For fixed
  $|P|-e_0$ and $j$, there are $\binom{i}{|P|}\leq 2^i$ choices for
  $s_1,\ldots,s_{|P|}$ (since each of them must be positive) and
  $\binom{j-e_0+|P|}{|P|}\leq 2^{j-e_0+|P|}$ choices for
  $e_1,\ldots,e_{|P|}$, which are non-negative. Thus, fixing $|P|$ and
  $e_0$ and summing over $s_1,\ldots,s_{|P|},e_1,\ldots,e_{|P|}$, the
  resulting sum is bounded by
  \[
    C_f(C_{\kappa})^{|P|}r^{|P|}\frac{r^{i+j}(|P|+e_0)!k!(i-|P|)!j!}{4^{j-e_0}e_0!}\frac{1}{(i+j+k+1)^m}
  \]
  and, provided $|P|+e_0\geq k$,
  \[
     C_f(C_{\kappa})^{|P|}r^{|P|}2^{i+|P|}\frac{(r/4)^{i+j}(|P|+e_0+k)!(i-|P|)!j!}{e_0!}\frac{1}{(i+j+k+1)^{m-\frac
       12}}.
 \]
 Both formulas above are increasing with respect to $e_0$. If
 $e_0=k-|P|$, moreover, the second formula is larger than the first
 one up to losing a power of $|P|$: indeed, the ratio between the two is
 \[
   \frac{2^{i+|P|}}{4^{i+k-|P|}}\frac{(2k)!}{k!k!}\sqrt{i+j+k+1}\geq \frac{8^{|P|}}{2^{i-1}}.
 \]
 To conclude, if $j+|P|\leq k$, then the sum over $e_0$ is bounded by
 \[
   jC_f(16C_{\kappa})^{|P|}r^{|P|}r^{i+j}(|P|+j)!k!(i-|P|)!\frac{1}{(i+j+k+1)^m}
 \]
 and if $j+|P|\geq k$, then this sum is bounded by
 \[
   jC_f(16C_{\kappa})^{|P|}r^{|P|}2^i(r/4)^{i+j}(|P|+j+k)!(i-|P|)!\frac{1}{(i+j+k+1)^{m-\frac
       12}}.
   \]
   We artificially added the factor $16^{|P|}$ in the first bound so
   that, if $j+|P|\geq k$, then the second bound implies the first
   one.

   We can now conclude: if $j+\min(i,n)\leq k$ (that is to say, if
   $j+|P|$ is always less than $k$), we sum the first bound over
   $|P|$, remarking that it is log-convex with respect to $|P|$. We
   obtain that the sum appearing in \eqref{eq:Faa} is bounded by
   \[
     ijC_f(16C_{\kappa}r)^{i}\frac{r^{i+j}k!}{(i+j+k+1)^m}\max_{|P|\in
       \{0,\min(i,n)\}}(|P|+j)!(i-|P|)!,
   \]
   If $j+\min(i,n)\geq k$, then we
   can apply the second bound for all $|P|$, so that we similarly
   obtain
   \[
     ijC_f(16C_{\kappa}r)^i\frac{(r/4)^{i+j}}{(i+j+k+1)^m}\max_{|P|\in
       \{0,\min(i,n)\}}(|P|+j+k)!(i-|P|)!.
   \]
   This concludes the proof.
 \end{proof}

\section{Geometry of the WKB Ansatz}
\label{sec:geometry-wkb-ansatz}

In this section we provide the geometric ingredients for the proof of Theorem \ref{thr:WKB-Bottom}. We formally
proceed as in the case of a Schr\"odinger operator
\cite{helffer_semi-classical_1988}. If a real-analytic, real-valued
function $f$ has a non-degenerate local
minimum at $P_0\in M$, we seek a sequence of eigenfunctions of
$T_N(f)$ of the
form
\[
  \psi_{P_0}^Ne^{N\varphi}(u_0+N^{-1}u_1+\ldots),
\]
where $\psi_{P_0}^N$ denotes the coherent state at $P_0$. If
$f(P_0)=0$, then the associated sequence
of eigenvalues should be of order $O(N^{-1})$, that is to say, follow
the asymptotic expansion:
\[
  N^{-1}\lambda_0+N^{-2}\lambda_1+\ldots.
\]
When solving the
eigenvalue problem, the terms of order $0$ in
\[e^{-N\varphi}T_N(f)\psi_{P_0}^Ne^{N\varphi}(u_0+N^{-1}u_1+\ldots)\] yield an
equation on $\varphi$. In the case of a Schr\"odinger operator this
is the eikonal equation $|\nabla \varphi|^2=V$, which is solved using
the Agmon metric. In our more general case, we are in presence of a
form of the Hamilton-Jacobi equation (see \eqref{eq:Hamilton-Jacobi} below), which we
solve in Proposition \ref{prop:Hamilton-Jacobi-sol} using a geometric
argument based on the existence of a stable manifold, in the spirit of \cite{sjostrand_analytic_1983}. Associated with $f$ and $\varphi$ are transport equations
which we must solve in order to recover the sequence of functions
$(u_k)_{k\geq 0}$. In Proposition \ref{prop:transport} we study this
transport equation under the point of view of symbol spaces of
Definition \ref{def:anal-symb}. This will allow us, in Proposition
\ref{prop:Wells-WKB}, to perform an analytic summation of the $u_k$'s
in order to find an exponentially accurate eigenfunction for $T_N(f)$,
with exponential decay away from $P_0$.

The plan of this section is as follows: we begin in
Subsection \ref{sec:study-phase-function} with
the study of an analytic phase which will be a deformation of the
phase $\Phi_1$ considered above. We then define and study the Hamilton-Jacobi equation
associated with a real-analytic function near a non-degenerate minimal
point, and the associated transport equations, in Subsections
\ref{sec:Hamilton-Jacobi-equation} and \ref{sec:transport-equations}
respectively. This geometric insight on the construction of a
quasimode attached to an elliptic point is not new, but the purpose of
this section is to fix notations, to present these ideas in a
self-sustained way and in the geometric context of Berezin-Toeplitz
situation, and to prove an analytic estimate for the solution of the
transport equation (Proposition \ref{prop:transport}).

In the rest of this article,
\begin{itemize}
  \item $(M,\omega,J)$ is a quantizable real-analytic compact
    K\"ahler manifold (which means that $\omega$ is real-analytic on
    the complex manifold $(M,J)$); $L,(S_N)_{N\in \N}$ and $(T_N)_{N\in \N}$ are
    the prequantum line bundle, the Bergman projectors and the
    Toeplitz quantizations of Definition \ref{def:toep};
    \item $f$ is a real-valued function on $M$ with
real-analytic regularity.
\item $U_0$ is a small neighbourhood of an elliptic point $P_0$ of $f$ which
  is a local minimum (such
  that the objects below exist on $U_0$); without loss of generality $f(P_0)=0$;
\item $\phi$ is a K\"ahler potential near $U_0$ such that, in a chart
  where $P_0$
  is mapped to $0$,
\[
  \phi(y)=\frac{|y|^2}{2}+O(|y|^3):
\]
that is, $\phi:U_0\to \R$ satisfies
\[
  \partial\overline{\partial}\phi=i\omega;
\]
\item $\widetilde{\phi}$ is the holomorphic function on $U_0\times
  \overline{U_0}$ such
  that $\widetilde{\phi}(x,\overline{x})=\phi(x)$ (holomorphic extension or
  polarisation of $\phi$);
\item More generally, $\widetilde{\phantom{f}}$ represents holomorphic
  extension of real-analytic functions: for instance, $\widetilde{f}$ is the
  extension of $f$ and is defined on $U_0\times \overline{U_0}$;
\item $\Phi_1:U_0^2\times \overline{U_0}^2$ is defined by 
\[
    \Phi_1:(x,y,\overline{w},\overline{z})\mapsto
  2\widetilde{\phi}(x,\overline{w})-2\widetilde{\phi}(y,\overline{w})+2\widetilde{\phi}(y,\overline{z})-2\widetilde{\phi}(x,\overline{z}).
\]
\end{itemize}
The function $\Phi_1$ is associated with the Bergman kernel $S_N$ in
the following way: the section $\Psi$ of Proposition \ref{prop:Szeg-gen} satisfies, for all $(x,y,z)\in U^3$:
  \[
    \langle \Psi^{\otimes N}(x,y),\Psi^{\otimes
      N}(y,z)\rangle_{L^{\otimes N}_y}=\Psi^{\otimes
      N}(x,z)\exp(N\Phi_1(x,y,\overline{y},\overline{z})).
  \]

\setcounter{subsection}{-1}

\subsection{Formal identification of the WKB ansatz}
\label{sec:form-ident-wkb}
We search for an eigenfunction of $T_N(f)$ of the form
\[
  x\mapsto \psi^N_0(x)e^{N\varphi(x)}(u_0(x)+N^{-1}u_1(x)+\ldots),
\]
where $\psi^N_0$ is the coherent state at $0$ (see Definition \ref{def:coh-state-anal}), and
$\varphi,u_0,u_1,\ldots$ are holomorphic functions on a fixed
neighbourhood of $0$.

This construction is local. Indeed, the only situation where the holomorphic functions
$\varphi,u_0,u_1,\ldots$ can be extended to the whole of $M$ is when
they are constant. However, if $\varphi$ does not grow too fast (see
Definition \ref{def:adm-phase}), then the trial function above is exponentially
small outside any fixed neighbourhood of zero.
In particular, applying $T_N(f)$ yields, by Proposition \ref{prop:Szeg-gen},
\begin{multline*}
  T_N(f)(e^{N\varphi}(u_0+N^{-1}u_1+\ldots)\psi^N_0):\\x\mapsto
  \psi^N_0(x)e^{N\varphi(x)}\int_Ue^{N\Phi_1(x,y,\overline{y},0)+N\varphi(y)-N\varphi(x)}f(y)\left(\sum_{k=0}^{cN}
    N^{d-k}a_k(x,y)\right)\left(u_0(y)+N^{-1}u_1(y)+\ldots\right)\dd
  y\\+O(e^{-cN}).
\end{multline*}
If the function appearing in the exponential
\[
  \Phi_2:(x,y)\mapsto \Phi_1(x,y,\overline{y},0)+\varphi(x)-\varphi(y)
\]
is a positive phase
function in the sense of \cite{melin_fourier_1975} (which is
guaranteed if $\varphi$ does not grow too fast, see Proposition
\ref{prop:phase:well}), one can apply the stationary phase lemma
(\cite{sjostrand_singularites_1982}, Theorem 2.8). If $y_*(x)$ is the
critical point of this phase (which belongs to the complexification $\widetilde{U_0}=U_0\times \overline{U_0}$), at dominant order, one has
\[
  T_N(f)(e^{N\varphi}u_0\psi^N_0)(x)=\psi^N_0(x)e^{N\varphi(x)}\widetilde{f}(y_*(x))\widetilde{a_0}(x,y_*(x))\widetilde{u_0}(y_*(x))J(x)+O(N^{-1}).
\]
where $J$ is a non-vanishing Jacobian.

Since we search for an eigenfunction with eigenvalue close to zero, we
want this principal term to vanish. As $J$ and $a_0$ do not vanish,
this yields
\[
 \widetilde{f}(y_*(x))=0,
\]
which boils down to a particular PDE on $\varphi$, the
\emph{Hamilton-Jacobi equation}. We provide a geometric solution to
this equation in Proposition \ref{prop:Hamilton-Jacobi-sol}.

To study the higher orders of the stationary phase lemma we introduce,
as in \cite{sjostrand_singularites_1982}, Lemma 2.7, 
a $x$-dependent, holomorphic change of variables $\kappa_x$, from a
neighbourhood of $y_*(x)$ in $\widetilde{U}_0$ to a neighbourhood of
$0$ in $\C^{2d}$, such that
\begin{equation}\label{eq:defkappa}
  \widetilde{\Phi_2}\circ\kappa_x^{-1}(v_1,\overline{v_2})=v_1\cdot \overline{v_2},
\end{equation}
as well as the associated gradient and Laplacian, acting as follows on
holomorphic functions on $U_0\times \widetilde{U_0}$:
\begin{align}
  \label{eq:defgradkappa}
  (\nabla_{\kappa_x}b):(x,y,\overline{w})&\mapsto
    \left(\frac{\partial b(x,\kappa^{-1}_x(v_1,\overline{v_2}))}{\partial
        v_{1,j}}(x,\kappa_x(y,\overline{w})),\frac{\partial b(x,\kappa^{-1}_x(v_1,\overline{v_2}))}{\partial
        \overline{v_{2,j}}}(x,\kappa_x(y,\overline{w}))\right)_{1\leq j\leq d}\\
  \label{eq:deflaplkappa}
  (\Delta_{\kappa_x}b):(x,y,\overline{w})&\mapsto
  \sum_{j=1}^d\frac{\partial^2b(x,\kappa_x^{-1}(v_1,\overline{v_2})}{\partial
  v_{1,j}\partial \overline{v_{2,j}}}.
\end{align}

At next order, the eigenvalue equation reads, for all $x\in U_0$,
\begin{align*}
  N^{-1}\lambda_0u_0(x)&=T_N(f)(e^{N\varphi}(u_0+N^{-1}u_1)\psi_0^N)(x)\text{
  mod }N^{-2}\\
  &=N^{-1}\psi_0^N(x)e^{N\varphi(x)}\left(\widetilde{f}J(\widetilde{a_0}\widetilde{u_1}+\widetilde{a_1}\widetilde{u_0})(x,y_*(x))
  +\Delta_{\kappa_x}(\widetilde{f}\widetilde{a_0}\widetilde{u_0}J)(x,y_*(x))\right).
\end{align*}
Since $\widetilde{f}(y_*(x))=0$, there is no contribution from $u_1$
at this order. Moreover, one can distribute

\[\Delta_{\kappa_x}(\widetilde{f}\widetilde{a_0}\widetilde{u_0}J)=\widetilde{f}\widetilde{a_0}J\Delta_{\kappa_x}\widetilde{u_0}+\widetilde{u_0}\Delta_{\kappa_x}(\widetilde{f}\widetilde{a_0}J)+\nabla_{\kappa_x}(\widetilde{f}\widetilde{a_0}J)\cdot
  \nabla_{\kappa_x}(\widetilde{u_0}).
\]
The first term of the right-hand side is zero when evaluated at
$y_*(x)$ since $\widetilde{f}(y_*(x))=0$. We obtain
\[
  \left(\nabla_{\kappa_x}(\widetilde{f}\widetilde{a_0}J)\right)(y_*(x))\cdot (\nabla_{\kappa_x}\widetilde{u_0})(y_*(x))=u_0(x)\left(\lambda_0-\Delta_{\kappa_x}(\widetilde{f}\widetilde{a_0}J)(y_*(x))\right).
\]
Observe that $\widetilde{f}$, as the complex extension of $f$, has a
critical point at $x=0$, so that, as long as $y_*(0)=0$ (which is
proved in Proposition \ref{prop:phase:well}), there holds
$\nabla_{\kappa_0}(\widetilde{f}\widetilde{a_0}J)(y_*(0))=0$. Hence, the
equation above implies
\[
  \lambda_0=\Delta_{\kappa_0}(\widetilde{f}\widetilde{a_0}J)(0).
\]
We will see in Proposition \ref{prop:WKB-formal} that this $\lambda_0$
indeed corresponds to the ground state energy of the Hessian of $f$ at
zero.
It remains to solve an equation of the form
\begin{equation}\label{eq:form_transport}
  \left(\nabla_{\kappa_x}(\widetilde{f}\widetilde{a_0}J)\right)(y_*(x))\cdot
  (\nabla_{\kappa_x}\widetilde{u_0})(y_*(x))=u_0(x)h(x),
\end{equation}
where $h(x)=\lambda_0-\Delta_{\kappa_x}(\widetilde{f}\widetilde{a_0}J)(y_*(x))$ vanishes at $x=0$. 
Similar equations are satisfied by the successive terms $u_k$. This
family of equations is solved (with a convenient control on the
size of the solution) in Proposition \ref{prop:transport}. Then, in
Section \ref{sec:almost-eigenvectors} we will prove by induction that the sequence
$(u_k)_{k\geq 0}$ indeed forms an analytic symbol and that the eigenvalue
equation admits a solution up to an $O(e^{-c'N})$ error.

\subsection{A family of phase functions}
\label{sec:study-phase-function}

In this subsection we study a family of analytic phases (in the sense
of Definition 3.11 in \cite{deleporte_toeplitz_2018}) given by a WKB ansatz at the bottom
of a well. To begin with, we describe the conditions on a holomorphic
function $\varphi$ at a neighbourhood of zero, such that $
  e^{N\varphi}\psi_{0}^N$
is a convenient first-order candidate for the ground state of $T_N(f)$.

\begin{defn}
\label{def:adm-phase}A holomorphic function $\varphi$ on $U_0$ is said to be \emph{admissible}
under the following conditions:
\begin{align*}
  \varphi(0)&=0\\
  \nabla\varphi(0)&=0\\
  \exists t<1,\,\forall x\in U_0,\, |\varphi(x)|&\leq \frac t2 |x|^2.
\end{align*}
\end{defn}
\begin{prop}\label{prop:phase:well}
  Let $\varphi$ be an admissible function. The function from $U_0\times U_0$
  to $\C$ defined by
  \begin{equation}\label{eq:defphi2}
    \Phi_2:(x,y)\mapsto \Phi_1(x,y,\overline{y},0) +\varphi(y)-\varphi(x)
  \end{equation}
  is, for all $x$ in a small neighbourhood of zero, a positive phase
  function of $y$ in the sense of \cite{melin_fourier_1975}.

The complex critical point of $\Phi_2$ is $y_*(x)=(x,\overline{y}_c(x))$, where
the holomorphic function $x\mapsto \overline{y}_c(x)$ satisfies
\begin{equation*}
  -2\partial_1\widetilde{\phi}(x,\overline{y}_c(x))+2\partial_1\widetilde{\phi}(x,0)=-
  \partial\varphi(x).
\end{equation*}
In particular, $\overline{y}_c(0)=0$.
\end{prop}
\begin{preuve}
  Near $y=\overline{w}=0$, there holds
  \[
    \Phi_1(0,y,\overline{w},0)=-y\cdot \overline{w}+O(|y,\overline{w}|^3).
    \]

    In particular, for $x=0$, the function $(y,\overline{w})\mapsto
    \widetilde{\Phi_2}(0,y,\overline{w})$ has a critical point at
    $(0,0)$ whose Hessian has a non-degenerate, negative real part (because
    $|\varphi(y)|\leq\frac{t|y|^2}{2}$). In particular, for $x$ small
    enough, $\widetilde{\Phi_2}$ has exactly one
    critical point near $0$, with non-degenerate, negative Hessian
    real part. The
  critical point $(y,\overline{w})$ satisfies the two equations
  \begin{align*}
    \overline{\partial}_{\overline{w}}\widetilde{\phi}(x,\overline{w})-\overline{\partial}_{\overline{w}}\widetilde{\phi}(y,\overline{w})&=0\\
    -2\partial_y\widetilde{\phi}(y,\overline{w})+2\partial_y\widetilde{\phi}(y,0)&=-\partial \varphi(y).    
  \end{align*}
  The first equation yields $y=x$, then the second equation has only
  one solution $\overline{w}=\overline{y}_c(x)$, so that the phase at this critical
  point is equal to
  \[
    2\widetilde{\phi}(x,\overline{y}_c(x))-2\widetilde{\phi}(x,\overline{y}_c(x))+2\widetilde{\phi}(x,0)-2\widetilde{\phi}(x,0)+\varphi(x)-\varphi(x)=0.
  \]
  This concludes the proof.
\end{preuve}

\subsection{Hamilton-Jacobi equation}
\label{sec:Hamilton-Jacobi-equation}

Let $\varphi$ be an admissible function. For every $x\in M$ close to
$0$, there exists one
$\overline{y_c}(x)$ in $\overline{U_0}$ such
that $(x,\overline{y_c}(x))$ is a critical point for the phase of
Proposition \ref{prop:phase:well}.

In order to find the phase of the WKB ansatz, we want to solve, in a neighbourhood of $0$, the following system of
 equations on $\varphi$ and $\overline{y_c}$, where $\varphi$ is an
 admissible function:
 \begin{equation}\label{eq:Hamilton-Jacobi}
   \begin{cases}
     \widetilde{f}(x,\overline{y_c}(x))=0.\\
     -2\partial_1\widetilde{\phi}(x,\overline{y}_c(x))+2\partial_1\widetilde{\phi}(x,0)=-\partial\varphi(x).
\end{cases}
\end{equation}
This will be called the \emph{Hamilton-Jacobi equation}. This equation is
non-trivial already at the formal level: for fixed $x$ the equation
$\widetilde{f}(x,\overline{y})=0$ defines (a priori) a manifold of
complex codimension $1$, which has a singularity at $x=0$. On the other hand,
we need to ensure that
$\partial_1\widetilde{\phi}(x,\overline{y}_c(x))$ is a closed
holomorphic $1$-form
in order to solve for $\varphi$. 

\begin{prop}
  \label{prop:Hamilton-Jacobi-sol}
  The Hamilton-Jacobi equation \eqref{eq:Hamilton-Jacobi} admits
  a solution near $0$, such that $\varphi$ is analytic. 
\end{prop}
\begin{proof}
  We follow the usual method (see the appendix of
  \cite{sjostrand_analytic_1983}), which will consist in considering
  the stable manifold of the Hamiltonian flow of $\widetilde{f}$ for a
  certain symplectic form.
  
  Since the Taylor expansion of $\phi$ at zero is
  \[
    \phi(x)=\frac 12 |x|^2+O(|x|^3),
    \]
    the map
  \[
    \overline{w}\mapsto 2\partial_1\widetilde{\phi}(x,\overline{w})=\overline{w} + O(|x,\overline{w}|^2)
  \]
  is a biholomorphism in a neighbourhood of zero, for $x$ small. Let $\gamma_x$ denote its
  inverse, then $\gamma_x$ is tangent to identity at $x=\overline{w}=0$.

  Letting \[\widetilde{f}_1:(x,\overline{z})\mapsto
    \widetilde{f}(x,\gamma_x(\overline{z})),\] the Hamilton-Jacobi equation
  \eqref{eq:Hamilton-Jacobi} is equivalent to the modified system:
  \[
    \begin{cases}
      \widetilde{f}_1(x,\overline{z_c}(x))=0\\
      -\overline{z_c}(x)+2\partial_1\widetilde{\phi}(x,0)=-\partial \varphi(x).
    \end{cases}
  \]

  The first step is to solve this equation at main order, that is,
  when $\widetilde{f}_1,\widetilde{\phi},\varphi$ are
  quadratic. This can be done using a KAK decomposition, and for
  completeness and pedagogical purposes we detail how this is done.
  The construction of this decomposition will also play a role in the proof
  of Proposition \ref{prop:transport}. 
  
  Let $Q$ be the Hessian of $f$ at zero and $\widetilde{Q}$ its
  holomorphic extension (as a quadratic form). Then $\widetilde{f}_1(x,\overline{z})=\widetilde{Q}(x,\overline{z})+O(|x,\overline{z}|^3)$
  since $\gamma_x$ is tangent to identity at $x=\overline{w}=0$.

  In the modified system, there holds $\overline{z_c}(x)=\partial (2\widetilde{\phi}(x,0)+\varphi(x))$, so that
  finding $x\mapsto \overline{z_c}(x)$ amounts to finding a
  holomorphic Lagrange submanifold $L=\{x,\overline{z_c}(x)\}$ of $\C^d\times \overline{\C^d}$ near $0$,
  for the standard symplectic form $\Im(\sum \dd x_j\wedge \dd \overline{z_j})$
  (which extends the symplectic form $\sum \dd \Re(x_j)\wedge \dd\Im(x_j)$), such that $L$ is
  contained in $\{\widetilde{f}_1=0\}$ and is transverse to the
  vertical $\{x=0\}$. Then, near $0$, one
  has $L=\{x,\partial F(x)\}$ for some holomorphic $F$, and it will only remain to
  check that $\varphi=F-2\widetilde{\phi}(\cdot,0)$ is admissible. As
  in \cite{sjostrand_analytic_1983}, from $f$ and the
  standard symplectic form, the Lagrangean $L$
  will be constructed as the stable manifold of the fixed point $0$
  for the symplectic flow of $\widetilde{f}_1$.

  Suppose $\widetilde{f}_1$ is
  quadratic; that is, $\widetilde{f}_1=\widetilde{Q}$.
  The quadratic form $Q$ admits a symplectic diagonalisation
  with respect to the (real) symplectic form $\sum \dd\Re(x_j)\wedge \dd\Im(x_j)$: there
  exists a symplectic matrix $S$, and positive numbers
  $\omega_1,\ldots, \omega_d$, such that
  \[
    Q=S^T\diag(\omega_1,\omega_1,\omega_2,\omega_2,\ldots,\omega_d,\omega_d)S.
  \]
  Let us study how this symplectic change of variables $S$ behaves under
   complexification. From the $KAK$ decompostion of the semisimple
  Lie group $Sp(2d)$ (or, more practically, using a singular value decomposition), the matrix $S$ can be written as $U_1DU_2$, where
  $U_1$ and $U_2$ belong to $Sp(2d)\cap O(2d)\simeq U(d)$, and $D=\diag(\sigma_1,\sigma_1^{-1},\ldots,
  \sigma_d,\sigma_d^{-1})>0.$

  We now complexify $U_1,U_2,D$ as $\R$-linear endomorphisms of $\C^d$ (in
  contrast with $Q$, which we complexified as a quadratic form). The complexified actions of $U_1$ and $U_2$ are straightforward: for
  $j=1,2$ one
  has $\widetilde{U}_j(x,\overline{z})=(U_jx,U_j^{-1}\overline{z})$. The action of
  $D$ is diagonal: $D=\diag(D_1,\ldots, D_d)$, with
  \[
    D_j(\Re(x_j),\Im(x_j))=\left(\sigma_j\Re(x_j),\sigma_j^{-1}\Im(x_j)\right).
  \]
  Hence, the action of $\widetilde{D}$ is block-diagonal, with
  \[
    \widetilde{D}_j(x_j,\overline{z_j})=\left(\cfrac{\sigma_j+\sigma_j^{-1}}{2}x_j+\cfrac{\sigma_j-\sigma_j^{-1}}{2}\overline{z_j},\cfrac{\sigma_j-\sigma_j^{-1}}{2}x_j+\cfrac{\sigma_j+\sigma_j^{-1}}{2}\overline{z_j}\right).
  \]
  After applying successively the changes of variables
  $\widetilde{U}_1, \widetilde{D},\widetilde{U}_2$, in the new
  variables, the quadratic form becomes
  \[
    \widetilde{f}_1\circ\widetilde{S}:(q,p)\mapsto \sum_{j=1}^d\omega_jq_jp_j.
    \]
Among the zero set of this form, a space of particular interest is
$\{p=0\}$. It is a holomorphic Lagrangean subspace, which is preserved by the symplectic gradient flow of
  $\widetilde{f}_1\circ\widetilde{S}$, and such that every solution starting from this subspace
  tends to zero for positive time. This subspace $\{p=0\}$
  is the \emph{stable manifold of zero} for the symplectic gradient of
  $\widetilde{f}_1\circ\widetilde{S}$. Let us show that, in the starting coordinates
  $(x,\overline{z})$, the stable manifold of $\widetilde{f}_1$ leads
  to an admissible 
  solution of the Hamilton-Jacobi equation.
  \begin{itemize}
    \item
  The inverse change of variables $\widetilde{U}_2^{-1}$ leaves
  $\{p=0\}$ invariant.
\item
  The inverse change of variables $\widetilde{D}^{-1}$ sends
  $\{p=0\}$ to $\{\overline{z}=Ax\}$, with $\|Ax\|_{\ell^2}\leq t\|x\|_{\ell^2}$ for some $t<1$. Indeed,
  the matrix $A$ has diagonal entries
  $\frac{\sigma_j-\sigma_j^{-1}}{\sigma_j+\sigma_j^{-1}}\in (-1,1).$
\item
  The inverse change of variables $\widetilde{U}_1^{-1}$ sends
  $\{\overline{z}=Ax\}$ to $\Lambda_0=\{\overline{z}=U_1AU_1^{-1}x\}$, with a similar property:
  for some $t<1$, there holds $\|U_1AU_1^{-1}x\|_{\ell^2}\leq t\|x\|_{\ell^2}$.
\end{itemize}
Then $\Lambda_0$ is  a linear space of the form $\{\overline{z}=\partial F_0(x)\}$, where $F_0$
is the holomorphic function
  \[
    F_0:x\mapsto \frac 12 \langle x,U_1AU_1^{-1}x\rangle.
  \]
  Hence $\varphi:x\mapsto
  F_0(x)-2\widetilde{\phi}(x,0)=F_0(x)+O(|x|^3)$ is an admissible  solution to the
  Hamilton-Jacobi equations.

  If $\widetilde{f}_1$ is quadratic, we just identified a holomorphic
  Lagrange submanifold transverse to $\{x=0\}$ and contained in
  $\{\widetilde{f}_1=0\}$, as the stable manifold of $0$ for the
  Hamiltonian flow of $\widetilde{f}_1$. In the general case, $\widetilde{f}_1$ is a small
  perturbation of its quadratic part in a small neighbourhood of $0$,
  so that, by the stable manifold Theorem (\cite{ruelle_ergodic_1979},
  Theorem 6.1), the stable subspace $\Lambda_0$ is deformed into a stable manifold $L$
  which has the same properties: $L$ is Lagrangean (since it is a
  stable manifold of a symplectic flow, it must be isotropic, and $L$
  has maximal dimension), and it is
  transverse to $x$ a small neighbourhood of zero since $T_0L$ is the linear
  Lagrangean subspace $\Lambda_0$ described above. Moreover, the
  Hamiltonian flow of $\widetilde{f}_1$ preserves $\widetilde{f}_1$ so that $L$ is
  contained in $\{\widetilde{f}_1=0\}$.

  We finally let $F$ be a holomorphic function such
  that $L=\{(x,\partial F(x))\}$.
  With $\varphi:x\mapsto 
  F(x)-2\widetilde{\phi}(x,0)$, and $\overline{z_c}(x)=\partial F(x)$,
  we obtain a solution to the modified Hamilton-Jacobi equation
    \[
    \begin{cases}
      \widetilde{f}_1(x,\overline{z_c})=0\\
      -\overline{z_c}+\partial_1\widetilde{\phi}(x,0)=-\partial \varphi(x).
    \end{cases}
  \]
Since $\widetilde{\phi}(x,0)=O(|x|^3)$, one has
$\varphi(x)=F(x)+O(|x|^3)=F_0(x)+O(|x|^3)$, so that
\[|\varphi(x)|=|F_0(x)|+O(|x|^3)\leq\frac t2 |x|^2\] for some $t<1$ on a neighbourhood of $0$. This
concludes the proof. 
\end{proof}


\subsection{Transport equations}
\label{sec:transport-equations}
In the proof of Theorem \ref{thr:WKB-Bottom}, one must solve
recursively transport equations of the form \eqref{eq:form_transport},
and prove that the solution is well-controlled. Let us prove that one
can control the solution of this equation by the source term.


\begin{prop}
  \label{prop:transport}
  Let $f':U_0\times \widetilde{U_0}\mapsto \C$ be holomorphic and such that
  \[
    f'(x,y,\overline{w})=\widetilde{f}(y,\overline{w})+O(|x,y,\overline{w}|^3),\]
 and let $\varphi$ be an admissible solution of the Hamilton-Jacobi
  equation \eqref{eq:Hamilton-Jacobi}. Let $x\in U_0$ and let
  $\nabla_{\kappa_x}$ as defined in \eqref{eq:defgradkappa}. 
  Let also $\overline{y}_c$ be the holomorphic function of $x$ such
  that $(x,\overline{y}_c(x))$ is the critical point of $\Phi_2$ as
  defined in \eqref{eq:defphi2}.
  Then there exists $U\subset U_0$ containing $0$ such that the
  following is true.

  For every $g:U\to \C$ holomorphic with $g(0)=0$, and every
  $h:U\to \C$ holomorphic with $h(0)= 0$, there exists a unique
  holomorphic function $u:U\to \C$ with $u(0)=0$ which solves the following
  transport equation:
  \[
    (\nabla_{\kappa_x}f')(x,x,\overline{y}_c(x))\cdot
    (\nabla_{\kappa_x}[(x,y,\overline{w})\mapsto u(y)])(x,x,\overline{y}_c(x))=h(x)u(x)+g(x).
  \]
  
  Moreover, there exists a $\C$-linear change of variables
  $A(f',\varphi)$ on $\C^d$, and positive constants $r_0(h,f',\varphi)$,
  $m_0(h,f',\varphi)$, $C(h,f',\varphi)$ such that,
  for every \[k\geq 0,\quad m\geq m_0(h,f',\varphi),\quad r\geq
  r_0(h,f',\varphi)2^{m-m_0(h,f',\varphi)},\quad C_g>0,\] for every $g$ as above which
satisfies, for every $j\geq 0$,
\begin{equation}\label{eq:ctrl_g_3.5}
    \sum_{|\mu|=j}|\partial^{\mu}(g\circ A(f',\varphi))(0)|\leq
C_g\cfrac{r^j(j+1)!k!}{(1+j+k+1)^m},
\end{equation}
one has, for every $j\geq 0$,
  \begin{equation}\label{eq:ctrl_u_3.5}
    \sum_{|\mu|=j}|\partial^{\mu}(u\circ A(f',\varphi))(0)|\leq
  C(h,f',\varphi)C_g\cfrac{r^jj!k!}{(1+j+k)^m}.
\end{equation}
If moreover $g$ satisfies the sharper control
\begin{equation}\label{eq:ctrl_g_sharp_3.5}
  \sum_{|\mu|=j}|\partial^{\mu}(g\circ A(f',\varphi))(0)|\leq
  C_g\cfrac{(r/4)^j(j+1+k)!}{(1+j+k+1)^{m-\frac 12}}\qquad \qquad \forall j\geq k,
\end{equation}
Then $u$ satisfies
 \begin{equation}\label{eq:ctrl_u_sharp_3.5}
  \sum_{|\mu|=j}|\partial^{\mu}(u\circ A(f',\varphi))(0)|\leq
  C(h,f',\varphi)C_g\cfrac{(r/4)^j(j+k)!}{(1+j+k)^{m-\frac 12}}\qquad \qquad \forall j\geq
  k.
\end{equation}
\end{prop}
Note that \eqref{eq:ctrl_g_sharp_3.5} is sharper than
\eqref{eq:ctrl_g_3.5}, and similarly \eqref{eq:ctrl_u_sharp_3.5} is sharper than
\eqref{eq:ctrl_u_3.5}, because for every $j\geq k$ one has
\[
  \frac{(j+k)!\sqrt{j+k+1}}{j!k!4^j}\leq 1;
\]
in fact in the limit case $k=j$ one has
\[
  \frac{(2j)!\sqrt{2j+1}}{j!j!4^j}\in \left[\frac 12,1\right].
  \]

\begin{proof}

  We let $X$ be the vector field on $U$ such that
  \[
    (\nabla_{\kappa_x}f')(x,x,\overline{y}_c(x))\cdot
    (\nabla_{\kappa_x}[(x,y,\widetilde{w})\mapsto
    u(y)])(x,x,\overline{y}_c(x))=X\cdot u(x).
  \]
  The proof consists in four steps. In the first step we prove that
  all trajectories of $X$ converge towards $0$ in negative time, so that there is no
  dynamical obstruction to the existence of $u$ near $0$ (if $X$ had wandering
  or closed trajectories, solving $X\cdot u=fu+g$ would require
  specific conditions on $f$ and $g$). 
  In the second step, we identify the successive terms
  of a formal power expansion of $u$, which allows us to control
  successive derivatives of $u$ at $0$: that is, we prove inequality
  \eqref{eq:ctrl_u_3.5} using \eqref{eq:ctrl_g_3.5}. In the third step, we prove
  that the solution $u$ is well-defined on the whole of $U$ for some
  small neighbourhood $U$ of $0$, using the
  Duhamel formula. In the fourth step, we
  finally prove that \eqref{eq:ctrl_g_sharp_3.5}$\Rightarrow$\eqref{eq:ctrl_u_sharp_3.5}.

  {\bf First step}
  
  We study the dynamics of the vector field $X$ in a
  neighbourhood of zero. To this end, we relate $\kappa_x$ to the linear
  change of variables which appeared in the proof of Proposition
  \ref{prop:Hamilton-Jacobi-sol} in the case where $f$ is quadratic.

  We first note that, as the Taylor expansion of $f'$ is 
  \[f'=\widetilde{f}+O(|x,y,\overline{w}|^3)=O(|x,y,\overline{w}|^2),\] one has
  $X(0)=0$. The Hessian of $\varphi$ at zero is determined by the Hessian of $f$
  at zero; it then determines the linear part of $\kappa_0$ at $0$,
  hence the linear part of $X$ at $0$. Up to a linear unitary change
  of variables, there exists a
  diagonal matrix $A$, a unitary matrix $U_1$, and positive $\omega_1,\ldots,\omega_d$, such that
  \[
    f:x\mapsto \sum_{j=1}^d\omega_j|(U_1Ax)_j|^2+O(|x|^3).
  \]
  Then $\varphi(x)= \frac 12\langle x, U_{1}AU_1^{-1}x\rangle+O(|x|^3)$, so that the phase reads
  \[
    \Phi_1(x,y,\overline{w},0)+\varphi(y)-\varphi(x)=2(x-y)\cdot\left(\overline{w}-\frac 14
    U_1AU_1^{-1}(x+y)\right)+O(|x,y,\overline{w}|^3).
    \]
    In particular, at first order, one can write
    \[
      \kappa_x(y,\overline{w})=\left(y-x,\overline{w}-\frac 14
        U_1^{-1}AU_1(y+x)\right)+O(|x,y,\overline{w}|^2).
    \]
    Hence, the inverse change of variables is of the form
    \[
      \kappa_x^{-1}(v_1,\overline{v_2})=\left(v_1+x,\overline{v_2}+\frac
        14U_1^{-1}AU_1(v_1+2x)\right)+O(|x,v_1,\overline{v_2}|^2),
    \]
    so that the restriction to the diagonal
    \[
      u\circ\kappa_x^{-1}(v,\overline{v})=u(v+x)+O(|x,v,\overline{v}|^2)
    \]
    is holomorphic with respect to $v$, at first order.

    We then wish to compute
    \[
      \nabla_{\kappa_x}f'\cdot
      \nabla_{\kappa_x}u:=[\overline{\partial}_v(f'\circ
      \kappa_x^{-1})\cdot \partial_v(u\circ
      \kappa^{-1}_x)+\partial_v(f'\circ\kappa_x^{-1})\cdot
      \overline{\partial}_v(u\circ \kappa_x^{-1})](v_1=\overline{v_2}=0)
    \]
    which is equal, at first order, to the opposite symplectic flow (for
    the symplectic form $\Im(\dd v\wedge \dd\overline{v})$) of $\widetilde{f}$
    applied to $u$:
    \[
      \nabla_{\kappa_x}f'\cdot
      \nabla_{\kappa_x}u=[\overline{\partial}_v(\widetilde{f}\circ
      \kappa_x^{-1})\cdot \partial_v(u\circ
      \kappa^{-1}_x)-\partial_v(\widetilde{f}\circ\kappa_x^{-1})\cdot
      \overline{\partial}_v(u\circ \kappa_x^{-1})](v_1=\overline{v_2}=0)+O(|x|^2),
    \]
    since $\partial_v(\tilde{f}\circ \kappa_x^{-1})\cdot
    \overline{\partial}_v(u\circ \kappa_x^{-1})=O(|x|^2)$.
    
    As seen in the proof of Proposition
    \ref{prop:Hamilton-Jacobi-sol}, the critical manifold
    $\{v_1(y,\overline{w})=\overline{v_2}(y,\overline{w})=0\}$ is the stable manifold for the Hamiltonian
    flow of $\widetilde{f}$, so that each trajectory of the vector
    field above is repulsed from zero in a non-degenerate way.

    {\bf Second step.}

    Since $X$ has $0$ as non-degenerate repulsive point, it can be
    diagonalised: there exists a linear change of variables
    $A(f',\varphi)$ on $\C^d$
    after which
    \[
      X=\sum_{i=1}^d\omega_ix_i\partial_{x_i}+O(|x|^2),
    \]
    for positive $\omega_i$.
  From now on we apply this linear change of variables and we will
  control $\|\nabla^ju(0)\|_{\ell^1}$ in these coordinates, from
  $\|\nabla^jg(0)\|_{\ell^1}$ in the same coordinates.

  Note that, by
  the Poincaré-Dulac theorem, after a non-linear change of variables,
  the non-linear $O(|x|^2)$ part in $X$ commutes with the linear part;
  this additional simplification is not needed here. Note also that,
  generically, the $\omega_i$'s are independent over $\Q$. In
  this case, in principle, one could completely eliminate the non-linear part in
  $X$, and in particular, build WKB quasimodes corresponding
  to a higher eigenvalue, not only the microlocal ground state.
  
  Let us expand
  \begin{align*}
    X\cdot u(x)&=\sum_{i=1}^d\left(\omega_ix_i+\sum_{|\nu|\geq 2}\frac{a_{i,\nu}}{\nu!}x^{\nu}\right)\cfrac{\partial}{\partial
                        x_i}\,u(x)\\
    h(x)&=\sum_{|\nu|\geq 1}\frac{h_{\nu}}{\nu!}x^{\nu}\\
    g(x)&=\sum_{|\nu|\geq 1}\frac{g_{\nu}}{\nu!}x^{\nu}.
  \end{align*}

  Then, for some $V\subset \subset U_0$ which contains $0$, for some
  positive $r_0,m_0$, one has $a_i\in H(m_0,r_0,V)$ and
  $h\in H(m_0,r_0,V)$, so that
  \begin{align*}
|h_{\nu}|&\leq  C_h\frac{r_0^{|\nu|}\nu!}{(1+|\nu|)^{m_0}}\qquad
           \forall |\nu|\geq 1\\
    |a_{i,\nu}|&\leq
                 C_a\frac{r_0^{|\nu|-1}\nu!}{|\nu|^{m_0}}\qquad
    \forall |\nu|\geq 2.
  \end{align*}
  The index shift on the control of $a_{i}$ will balance the one in \eqref{eq:induc_u_3.5} below.
  
  Let $m\geq m_0$ and $r\geq r_02^{2+m-m_0}$, to be fixed later
  on. Then, one has also
  \begin{align}
|h_{\nu}|&\leq  C_h\frac{(r/4)^{|\nu|}\nu!}{(1+|\nu|)^{m}}\qquad
           \forall |\nu|\geq 1 \label{eq:ctrl_h_gen}\\
    |a_{i,\nu}|&\leq C_a\frac{(r/4)^{|\nu|-1}\nu!}{|\nu|^{m}}\qquad \forall |\nu|\geq 2\label{eq:ctrl_a_gen}.
  \end{align}
  
  Let us now suppose that \eqref{eq:ctrl_g_3.5} holds, that is, for some $k\geq 0$, for every $j\geq 0$, one has
  \begin{equation}\label{eq:ctrl_g_rec_3.5}
    \sum_{|\nu|=j}|g_{\nu}|\leq C_g\frac{r^{j}k!(j+1)!}{(1+k+j+1)^m}.
  \end{equation}
  We will solve the transport equation with
  \[
    u:x\mapsto\sum_{|\nu|\geq 1}\cfrac{u_{\nu}}{\nu!}\,x^{\nu},
  \]
  and prove by induction on $j\geq 0$ that \eqref{eq:ctrl_u_3.5}
  holds, i.e. 
  \begin{equation}\label{eq:ctrl_u_rec_3.5}
    \sum_{|\mu|=j}|u_{\mu}|\leq C(h,f',\varphi)C_g\frac{r^jk!j!}{(1+k+j)^m},
  \end{equation}
  as long as $m$ is large enough with respect to $C_a$ and $C_h$, and
  $r$ is large enough accordingly.
  
  For $j=0$, one has $u(0)=0$ by hypothesis.
  The transport equation is equivalent to the following family of equations indexed by
  $\mu$ with
  $|\mu|\geq 1$:
  \begin{equation}\label{eq:induc_u_3.5}
    u_{\mu}\frac{\sum_{i=1}^d\omega_i\mu_i}{\mu!}=\sum_{|\nu|\geq
      1}\cfrac{h_{\nu}u_{\mu-\nu}}{\nu!(\mu-\nu)!}+\cfrac{g_{\mu}}{\mu!}-\sum_{i=1}^d\sum_{|\nu|\geq
      2}\cfrac{a_{i,\nu}u_{\mu-\nu+\eta_i}}{\nu!(\mu-\nu+\eta_i)!}.
  \end{equation}
  Here, as in the rest of the proof, $\eta_i$ denotes the base
  polyindex with coefficients $(0,0,\ldots, 0,1,0, \ldots, 0)$ where
  the $1$ is at the site $i$.
  
  Observe that $u_{\mu}$ appears only on the left-hand side of the
  equation above, while the
  right-hand side contains coefficients $u_{\rho}$ with $\rho<\mu$.
  As the eigenvalues $\omega_i$ are all positive, one can solve for
  $u_{\mu}$ by induction. Indeed, there exists $C_{\omega}>0$ such
  that, for every $|\mu|\neq 0$ there holds \[\sum_{i=1}^d\omega_i\mu_i\geq
  C_{\omega}^{-1}(|\mu|+1).\] In particular,
  \[
    |u_{\mu}|\leq \frac{C_{\omega}}{|\mu|+1}\left(|g_{\mu}|+\left|\sum_{|\nu|\geq
        1}\cfrac{h_{\nu}u_{\mu-\nu}\mu!}{\nu!(\mu-\nu)!}\right|+\left|\sum_{i=1}^d\sum_{|\nu|\geq
        2}\cfrac{a_{i,\nu}u_{\mu-\nu+\eta_i}\mu!}{\nu!(\mu-
      \nu+\eta_i)!}\right|\right).
\]
One has, directly, from \eqref{eq:ctrl_g_rec_3.5},
\[
  \sum_{|\mu|=j}\frac{C_{\omega}|g_{\mu}|}{|\mu|+1}\leq
  C_gC_{\omega}\frac{r^{j}k!j!}{(1+k+j)^m}.
  \]
From \eqref{eq:ctrl_h_gen}, one has
\begin{align*}
  \sum_{|\mu|=j}\left|\sum_{|\nu|\geq
  1}\cfrac{h_{\nu}u_{\mu-\nu}\mu!}{\nu!(\mu-\nu)!}\right|&\leq\sum_{\ell=1}^{j-1}\sum_{|\rho|=\ell}|u_{\rho}|\sum_{\substack{|\mu|=j\\\mu\geq\rho}}\cfrac{|h_{\mu-\rho}|}{(\mu-\rho)!}\cfrac{\mu!}{\rho!}\\
  &\leq
    C_h\sum_{\ell=1}^{j-1}r^{j-\ell}\sum_{|\rho|=\ell}|u_{\rho}|\sum_{\substack{|\mu|=j\\\mu\geq\rho}}\cfrac{\mu!}{\rho!}\cfrac{1}{(1+j-\ell)^m}.
\end{align*}
Note that, when applying \eqref{eq:ctrl_h_gen}, we have loosened $(r/4)^j$ into $r^j$ ; the
supplementary power $4^j$ will be used only in the fourth step.

For $|\rho|=\ell$ there holds
\[
  \sup_{\substack{|\mu|=j\\\mu\geq\rho}}\cfrac{\mu!}{\rho!}\leq
  \cfrac{j!}{\ell!},
\]
since if $\rho_{M}$ denotes the largest index of $\rho$ the supremum
above is $(\rho_M+1)(\rho_M+2)\ldots(\rho_M+j-\ell)$.
Moreover, there
are less than $(j-\ell+1)^d$ polyindices $\mu$ such that $|\mu|=j$ and
$\mu\geq\rho$ with $|\rho|=\ell$.

Hence, by the induction hypothesis \eqref{eq:ctrl_u_rec_3.5},
\begin{align*}
  \sum_{|\mu|=j}\left|\sum_{|\nu|\geq
  1}\cfrac{h_{\nu}u_{\mu-\nu}\mu!}{\nu!(\mu-\nu)!}\right|&\leq
                                                           C_h\sum_{\ell=1}^{j-1}r^{j-\ell}\cfrac{j!}{\ell!}\cfrac{(1+j-\ell)^d}{(1+j-\ell)^{m}}\sum_{|\rho|=\ell}|u_{\rho}|\\
  &\leq
    C_hC(h,f',\varphi)C_g\cfrac{r^jj!k!}{(1+k+j)^m}\sum_{\ell=1}^{j-1}\cfrac{(1+j-\ell)^d(1+k+j)^m}{(1+j-\ell)^m(1+k+\ell)^m}.
\end{align*}

From Lemma 2.13 in \cite{deleporte_toeplitz_2018}, if $m\geq \max(d+2,2(d+1))$,
there holds
\[
  \sum_{\ell=1}^{j-1}\cfrac{(1+j-\ell)^d(1+k+j)^m}{(1+j-\ell)^m(1+k+\ell)^m}\leq
  C(d)\cfrac{3^m}{4^m}.
\]
In particular,
\[
   \sum_{|\mu|=j}\left|\sum_{|\nu|\geq
  1}\cfrac{h_{\nu}u_{\mu-\nu}\mu!}{\nu!(\mu-\nu)!}\right|\leq
C_hC(d)\cfrac{3^m}{4^m}\,C(h,f',\varphi)C_g\cfrac{r^jj!k!}{(1+k+j)^m}.\]
For $m$ large enough with respect to $C_hC(d)C_{\omega}$, and $r\geq r_02^{2+m-m_0}$, one has\[\sum_{|\mu|=j}\left|\sum_{|\nu|\geq
  1}\cfrac{h_{\nu}u_{\mu-\nu}\mu!}{\nu!(\mu-\nu)!}\right|\leq \frac 1{3C_{\omega}}
C(h,f',\varphi)C_g\cfrac{r^jj!k!}{(1+k+j)^m}.\]

Similarly, from \eqref{eq:ctrl_a_gen}, one can control, for $1\leq i \leq d$, the quantity
\begin{align*}
  \left|\sum_{|\nu|\geq
        2}\cfrac{a_{i,\nu}u_{\mu-\nu+\eta_i}\mu!}{\nu!(\mu-
      \nu+\eta_i)!}\right|&\leq\sum_{\ell=1}^{j-1}\sum_{|\rho|=\ell}|u_{\rho}|\sum_{\substack{|\mu|=j\\\mu\geq
  \rho-\eta_i}}\cfrac{|a_{i,\mu-\rho+\eta_i}|\mu!}{(\mu-\rho+\eta_i)!\rho!}
\\
  &\leq C_a\sum_{\ell=1}^{j-1}\sum_{|\rho|=\ell}|u_{\rho}|\sum_{\substack{|\mu|=j\\\mu\geq
  \rho-\eta_i}}r^{j-\ell}\cfrac{\mu!}{\rho!}\,\cfrac{1}{(1+j-\ell)^m}.
\end{align*}
Again we have loosened $(r/4)^j$ into $r^j$.

Letting $\rho_M$ denote again the large index of $\rho$, and $\rho_m$
its smallest non-zero index, then
\[
  \max_{\substack{|\mu|=j\\\mu\geq \rho-\eta_i}}\cfrac{\mu!}{\rho!}=\cfrac{(\rho_M+j-\ell+1)!}{\rho_M!\rho_m}\leq
  \cfrac{j!}{(\ell-1)!}\leq \frac{(j+1)!}{\ell!}.
\]
In particular, by the induction hypothesis \eqref{eq:ctrl_u_rec_3.5},
\begin{align*}
  \left|\sum_{|\nu|\geq
        2}\cfrac{a_{i,\nu}u_{\mu-\nu+\eta_i}\mu!}{\nu!(\mu-
      \nu+\eta_i)!}\right|
  &\leq
    C_aC(h,f',\varphi)C_gr^j(j+1)!k!\sum_{\ell=1}^{j-1}\,\cfrac{(1+j-\ell)^d}{(1+j-\ell)^m(1+k+\ell)^m}\\
  &\leq
    C_aC(h,f',\varphi)C_g\cfrac{r^j(j+1)!k!}{(1+j+k)^m}\sum_{\ell=1}^{j-1}\cfrac{(1+j-\ell)^d(1+j+k)^m}{(1+j-\ell)^m(1+k+\ell)^m}\\
  &\leq C_aC(d)\cfrac{3^m}{4^m}\,C(h,f',\varphi)C_g\cfrac{r^j(j+1)!k!}{(1+j+k)^m}.
\end{align*}
Hence, for $m$ large enough, and $r$ large enough accordingly, one has,
for every $1\leq i\leq d$,
\[
  \left|\sum_{|\nu|\geq
        2}\cfrac{a_{i,\nu}u_{\mu-\nu+\eta_i}\mu!}{\nu!(\mu-
      \nu+\eta_i)!}\right|\leq \frac {1}{3dC_{\omega}}
  C(h,f',\varphi)C_g\cfrac{r^j(j+1)!k!}{(1+k+j)^m}.
  \]

To conclude, if $C(h,f',\varphi)\geq 3C_{\omega}$, then
\[
  \sum_{|\mu|=j}|u_{\mu}|\leq\frac{1}{j+1}\left(\frac 13
    C(h,f',\varphi)+\frac 13 C(h,f',\varphi)+\frac{1}{3}
    C(h,f',\varphi)\right)C_g\cfrac{r^j(j+1)!k!}{(1+k+j)^m},
\]
which concludes the induction.

{\bf Third step}

Let $U$ be a neighbourhood of $0$ such that all trajectories of $X$,
starting in $U$, converge to $0$ (exponentially fast) in negative time.
It remains to prove that $u$ is well-defined and holomorphic on $U$. Since the
sequence of derivatives of $u$ at $0$ enjoys an analytic-type growth control, the
associated power series converges on some small neighbourhood $W$ of
$0$. Then, from the knowledge of $u$ on $W$ one can build $u$ on $U$
using the geometric structure of the transport equation. Indeed, by
definition $0$
is the repulsive point of all trajectories of $X$ on $U$.
Letting $(\Phi_t)_{t\in \R}$ denote the flow of $-X$, there exists $T>0$ such that $\Phi_T(U)\subset
W$. Then the transport equation on $u$ implies the Duhamel formula
\[
  u(x)=u(\Phi_T(x))+\int_0^Tg(\Phi_t(x))\dd t +
  \int_0^Tu(\Phi_t(x))h(\Phi_t(x))\dd t.\]
By the analytic Picard-Lindel\"of theorem, the unique solution of this degree $1$ integral equation, where the
initial data $u(\Phi_T(x))$ and the coefficients have real-analytic dependence
on $\Phi_T(x)\in W$, is well-defined and real-analytic. Then $u$ is
well-defined on $U$, and holomorphic since the derived equation on
$\overline{\partial}u$ is $\overline{\partial}u=0$.

{\bfseries Fourth step}

Now we impose the stronger control \eqref{eq:ctrl_g_sharp_3.5} on $g$
and prove \eqref{eq:ctrl_u_sharp_3.5}.
Observe that, if $j\geq k$ and
\[
  \sum_{|\mu|=j}|g_{\mu}|\leq
  C_g\frac{(r/4)^j(j+k+1)!}{(1+j+k+1)^{m-\frac 12}},
\]
and if $C(h,f,\varphi)\geq 6C_{\omega}$, then
\[
  \sum_{|\mu|=j}\frac{|g_{\mu}|}{\left|\sum_{i=1}^d\omega_i\mu_i\right|}\leq
  \frac 16 C(h,f,\varphi)C_g\frac{(r/4)^j(j+k+1)!}{(1+k+j+1)^{m-\frac 12}(j+1)}\leq
  \frac 13 C(h,f,\varphi)C_g\frac{(r/4)^j(j+k)!}{(1+k+j)^{m-\frac 12}}.
\]
It then remains to study how the more precise condition on $u$
propagates. Fix $j\geq k$; suppose that \eqref{eq:ctrl_u_3.5} is satisfied for all
$\ell<j$, and that \eqref{eq:ctrl_u_sharp_3.5} is satisfied for all
$k\leq \ell<j$. Then
\[
  \sum_{|\mu|=j}\sum_{|\nu|\geq
    1}\left|\frac{h_{\nu}u_{\mu-\nu}\mu!}{\nu!(\mu-\nu)!}\right|=\sum_{|\mu|=j}\sum_{1\leq
    |\nu|\leq
    j-k-1}\left|\frac{h_{\nu}u_{\mu-\nu}\mu!}{\nu!(\mu-\nu)!}\right|+\sum_{|\mu|=j}\sum_{|\nu|\geq
    j-k}\left|\frac{h_{\nu}u_{\mu-\nu}\mu!}{\nu!(\mu-\nu)!}\right|.
\]
In the first sum, one has $|\mu-\nu|\geq k$. Hence
\[
  \sum_{|\mu|=j}\sum_{1\leq
    |\nu|\leq
    j-k-1}\left|\frac{h_{\nu}u_{\mu-\nu}\mu!}{\nu!(\mu-\nu)!}\right|=\sum_{\ell=k}^{j-1}\sum_{|\rho|=\ell}|u_{\rho}|\sum_{\substack{|\mu|=j\\\mu\geq
      \rho}}\frac{|h_{\mu-\rho}|\mu!}{\nu!(\mu-\nu)!}.
\]
From there and \eqref{eq:ctrl_h_gen}, one has, as previously,
\begin{align*}
  \sum_{|\mu|=j}\left|\sum_{1\leq
    |\nu|\leq
    j-k-1}\cfrac{h_{\nu}u_{\mu-\nu}\mu!}{\nu!(\mu-\nu)!}\right|&\leq\sum_{\ell=k}^{j-1}\sum_{|\rho|=\ell}|u_{\rho}|\sum_{\substack{|\mu|=j\\\mu\geq\rho}}\cfrac{|h_{\mu-\rho}|}{(\mu-\rho)!}\cfrac{\mu!}{\rho!}\\
  &\leq
    C_h\sum_{\ell=k}^{j-1}(r/4)^{j-\ell}\sum_{|\rho|=\ell}|u_{\rho}|\sum_{\substack{|\mu|=j\\\mu\geq\rho}}\cfrac{\mu!}{\rho!}\cfrac{1}{(1+j-\ell)^m}\\
  &\leq
    C_h\sum_{\ell=k}^{j-1}(r/4)^{j-\ell}\frac{j!}{\ell!}\sum_{|\rho|=\ell}|u_{\rho}|\sum_{\substack{|\mu|=j\\\mu\geq\rho}}\cfrac{1}{(1+j-\ell)^{m-\frac
  12}}\\
  &\leq
    C_hC(h,f',\varphi)C_g\frac{(r/4)^j(j+k)!}{(1+j+k)^{m-\frac 12}}\sum_{\ell=1}^{k}\underbrace{\frac{j!(\ell+k)!}{\ell!(j+k)!}}_{\leq
    1}\cfrac{(1+j-\ell)^d(1+j+k)^{m-\frac 12}}{(1+j-\ell)^{m-\frac 12}(1+\ell+k)^{m-\frac 12}}.
\end{align*}
If $m$ is large enough, and $r$ is large accordingly,
we obtain
\[
  \sum_{|\mu|=j}\left|\sum_{1\leq
    |\nu|\leq
    j-k-1}\cfrac{h_{\nu}u_{\mu-\nu}\mu!}{\nu!(\mu-\nu)!}\right|\leq
\frac{1}{6C_{\omega}}C(h,f',\varphi)C_g\frac{(r/4)^j(j+k)!}{(1+k+j)^{m+\frac
  12}}.
\]

In the second sum, we have
\begin{align*}
\sum_{\ell=1}^k\sum_{|\rho|=\ell}|u_{\rho}|\sum_{\substack{|\mu|=j\\
    \mu\geq \rho}}\frac{|h_{\mu-\rho}|\mu!}{(\mu-\rho)!\rho!} &
\leq
C_hC(h,f',\varphi)C_g\sum_{\ell=1}^{j}(r/4)^{j-\ell}\frac{j!(1+j-\ell)^d}{\ell!(1+j-\ell)^m}\sum_{|\rho|=\ell}|u_{\rho}|\\
& \leq
                                                                                                     C_hC(h,f',\varphi)C_gr^jj!k!4^{k-j}\sum_{\ell=1}^k\frac{(1+j-\ell)^d}{(1+j-\ell)^m(1+k+\ell)^m}.
\end{align*}
Let us prove that, since $k\leq j$, one has
\[
  \frac{4^{k}j!k!}{(j+k)!\sqrt{j+k+1}}\leq 2.
\]
This is a log-convex function of $k$; at $k=0$ it is equal to
$1/\sqrt{j+1}$. at $k=j$ we use the fact that
\[
  4^jj!j!\leq 4^j(2j)!\times \frac{j!j!}{(2j)!}\leq 2\sqrt{2j+1}(2j)!,
\]
as remarked before the proof.

In particular, since $\sqrt{j+k+1}\leq \sqrt{(j-\ell+1)(k+\ell+1)}$,
one has
\[
  \sum_{\ell=1}^k\sum_{|\rho|=\ell}|u_{\rho}|\sum_{\substack{|\mu|=j\\
      \mu\geq \rho}}\frac{|h_{\mu-\rho}|\mu!}{(\mu-\rho)!\rho!}\leq
  2C_hC(h,f',\varphi)C_g\frac{(r/4)^j(j+k)!}{(1+j+k)^{m-\frac
      12}}\sum_{\ell=1}^k\frac{(1+j-\ell)^d(1+j+k)^{m-\frac
      12}}{(1+j-\ell)^{m-\frac 12}(1+k+\ell)^{m-\frac 12}}.
  \]

We finally obtain, for $m$ large enough, and $r\geq 2^{2+m-m_0}$,
\[
  \sum_{|\mu|=j}\left|\sum_{|\nu|\geq
      1}\frac{h_{\nu}u_{\mu-\nu}\mu!}{\nu!(\mu-\nu)!}\right|\leq
  \frac{1}{3C_{\omega}}C(h,f',\varphi)C_g\frac{(r/4)^j(j+k)!}{(1+j+k)^{m-\frac
    12}}.
\]
The control on
\[
  \left|\sum_{i=1}^d\sum_{|\nu|\geq
      2}\frac{a_{i,\nu}u_{\mu-\nu+\eta_i}\mu!}{\nu!(\mu-\nu+\eta_i)!}\right|
\]
is very similar; the only notable difference is the combinatorial
factor studied in Part 2,
\[
  \max_{\substack{|\mu|=j\\\mu\geq \rho-\eta_i}}\frac{\mu!}{\rho!}\leq
  \frac{(j+1)!}{\ell!}=(j+1)\frac{j!}{\ell!},\]
which brings a supplementary factor $j+1$ in all cases. We obtain
\[
  \left|\sum_{|\mu|=j}\sum_{i=1}^d\sum_{|\nu|\geq
      2}\frac{a_{i,\nu}u_{\mu-\nu+\eta_i}\mu!}{\nu!(\mu-\nu+\eta_i)!}\right|\leq
  \frac{1}{3C_{\omega}}C(h,f',\varphi)C_g\frac{(r/4)^j(j+k)!}{(1+j+k)^{m-\frac
      12}}(j+1),
\]
and finally,
\[
\left|\sum_{|\mu|=j}\frac{1}{\sum_{i=1}^d\omega_i\mu_i}\left[\sum_{|\nu|\geq
      1}\cfrac{h_{\nu}u_{\mu-\nu}\mu!}{\nu!(\mu-\nu)!}+g_{\mu}-\sum_{i=1}^d\sum_{|\nu|\geq
      2}\cfrac{a_{i,\nu}u_{\mu-\nu+\eta_i}\mu!}{\nu!(\mu-\nu+\eta_i)!}\right]\right|\leq
C(h,f',\varphi)C_g\frac{(r/4)^j(j+k)!}{(1+j+k)^{m-\frac 12}},
\]
which concludes the proof.
\end{proof}

\section{Construction of quasimodes}
\label{sec:almost-eigenvectors}

Solving the Hamilton-Jacobi equation then controlling successive transport equations
allows us to prove the first part of Theorem A, which is the object of
this section.

The strategy of proof is the following: we first exhibit sequences
$(u_i)_{i\geq 0}$ and $(\lambda_i)_{i\geq 0}$ such that the eigenvalue
equation \eqref{eq:eigval}
is valid up to $O(N^{-\infty})$, and we control these sequences in
analytic spaces. Then we prove that one can perform an analytic summation
in \eqref{eq:eigval}.

Before proceeding, we note that, if $\varphi$ is admissible and $u(N)$ is the summation of an
  analytic symbol, both being defined on an open neighbourhood $V$ of
  $0$, then $\mathds{1}_Ve^{N\varphi}u(N)\psi_{0}^N$ concentrates at
  $0$, in the sense that there exist $C>0,c>0$ such that for every open set $W\subset M$,
  \[
    \|N^{-\frac d2}\mathds{1}_Ve^{N\varphi}u(N)\psi_0^N\|_{L^2(W)}\leq Ce^{-cN\dist(W,\{0\})^2}.
    \]
    and moreover, by Proposition \ref{prop:Szeg-gen} and
  the stationary phase lemma, there exists $C>0$ such that, for every
  $N\in \N$, there holds
  \[
    \frac 1C N^{\frac d2}\leq
    \|\mathds{1}_Ve^{N\varphi}u(N)\psi_{0}^N\|_{L^2(M)}\leq CN^{\frac d2}.
  \]
  In particular, if
  \[\|(T_N(f)-\lambda(N))\mathds{1}_Ve^{N\varphi}u(N)\psi_{0}^N\|_{L^2(M)}\leq
  Ce^{-c'N}\] then $\lambda(N)$ will be exponentially close to the
  spectrum of $T_N(f)$. Thus, through Proposition \ref{prop:sum-WKB} we are indeed
  providing quasimodes of $T_N(f)$ which concentrate on $0$.

\begin{prop}
  \label{prop:WKB-formal}
  Let $\varphi$ denote an admissible solution to the Hamilton-Jacobi
  equations \eqref{eq:Hamilton-Jacobi}, and let $\psi_{0}^N$ denote
  the sequence of coherent states at $0$. There exists
  $W\subset \subset V\subset\subset U\subset U_0$ containing zero, a sequence
  $(u_k)_{k\geq 0}$ of holomorphic functions on $U$, and a sequence
  $(\lambda_k)_{k\geq 0}$ of real numbers, such that for every $K\geq
  0$ there holds
  \[
    \left\|\left(T_N(f)-\sum_{k=0}^KN^{-k-1}\lambda_k\right)\mathds{1}_V\psi_0^Ne^{N\varphi}\sum_{k=0}^KN^{-k}u_k\right\|_{L^2(W)}=O(N^{-\frac{d}{2}-K-2}).
    \]
    One has
    \[
      \lambda_0=\min\Sp(T_1(\Hess(f)(0))).
    \]
  \end{prop}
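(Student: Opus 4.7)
The plan is to construct the sequences $(u_k)_{k\geq 0}$ and $(\lambda_k)_{k\geq 0}$ by induction on $k$, following exactly the formal computation outlined in Subsection \ref{sec:form-ident-wkb}. The starting point will be to use Proposition \ref{prop:Szeg-gen} to rewrite $T_N(f)\bigl(\psi_0^N e^{N\varphi} u(N)\bigr)(x)$, with $u(N) = \sum_{k=0}^N N^{-k} u_k$, as an oscillatory integral whose phase $\Phi_1(x, y, \overline{y}, 0) + \varphi(y) - \varphi(x)$ is a positive phase function with unique non-degenerate critical point $y_*(x) = (x, \overline{y}_c(x))$ by Proposition \ref{prop:phase:well}. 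The stationary phase lemma will then yield a formal asymptotic expansion
\[
T_N(f)\bigl(\psi_0^N e^{N\varphi} u(N)\bigr)(x) \sim \psi_0^N(x)\, e^{N\varphi(x)} \sum_{\ell \geq 0} N^{-\ell} E_\ell(x),
\]
where $E_\ell$ is a differential polynomial in $u_0, \dots, u_\ell$ and their derivatives, evaluated along $y_*$.

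The key structural observation is that $u_\ell$, and the top-order derivatives of $u_{\ell-1}$, enter $E_\ell$ only through a prefactor $\widetilde{f}(y_*(x))$, which vanishes identically since $\varphi$ solves the Hamilton-Jacobi equation \eqref{eq:Hamilton-Jacobi}. Matching coefficients of $N^{-\ell}$ on both sides of the eigenvalue equation $T_N(f) u(N) \sim N^{-1}\bigl(\sum_k N^{-k} \lambda_k\bigr) u(N)$ will then produce, at each order $\ell \geq 1$, a first-order equation of the form
\[
X \cdot u_{\ell-1}(x) + (h(x) - \lambda_0)\, u_{\ell-1}(x) = \lambda_{\ell-1}\, u_0(x) + G_\ell(x),
\]
where $X$ is the holomorphic vector field associated with $\bigl(\widetilde{\nabla}(x)(\widetilde{f} a_0 J)\bigr)(y_*(x))$, where $h(x) = \widetilde{\Delta}(x)(\widetilde{f} a_0 J)(y_*(x))$, and where $G_\ell$ depends only on $u_0, \dots, u_{\ell-2}$ and $\lambda_0, \dots, \lambda_{\ell-2}$, with $G_1 \equiv 0$.

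Because $\widetilde{f}$ has a critical point at $(0, 0)$, the vector field $X$ vanishes at $0$; together with the definition $\lambda_0 := h(0)$, this makes the left-hand side of the $\ell$-th equation vanish at $x = 0$. Evaluating at $0$ therefore forces the compatibility condition $\lambda_{\ell-1}\, u_0(0) + G_\ell(0) = 0$, which, with $u_0(0) = 1$, determines $\lambda_{\ell-1}$ uniquely. For $\ell = 1$, I write $u_0 = 1 + v_0$ and apply Proposition \ref{prop:transport} to $v_0$, which satisfies $v_0(0) = 0$; for $\ell \geq 2$, once $\lambda_{\ell-1}$ has been fixed the right-hand side vanishes at $0$, and Proposition \ref{prop:transport} directly produces the unique holomorphic $u_{\ell-1}$ on $U$ with $u_{\ell-1}(0) = 0$. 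This closes the induction; the required $L^2(V)$ bound then follows since truncating $u(N)$ at $k = K+1$ incurs an error of size $O(N^{-K-1})$ against the concentrated factor $\psi_0^N e^{N\varphi}$.

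Finally, the identification $\lambda_0 = \min \Sp(T_1(\Hess(f)(0)))$ follows from the fact that the formula $\lambda_0 = \widetilde{\Delta}(0)(\widetilde{f} a_0 J)(0)$ only involves the $2$-jet of $f$ at $0$ and the Bergman kernel at $(0, 0)$, i.e.\ only the flat Bargmann model on $\C^d$. In that model the bottom of the spectrum has been computed in \cite{deleporte_low-energy_2016} and matches $\widetilde{\Delta}(0)(\widetilde{f} a_0 J)(0)$. The hardest part of the plan is the stationary-phase bookkeeping that justifies the triangular structure: one must verify that neither $u_\ell$ nor the second-order derivatives of $u_{\ell-1}$ can contribute to $E_\ell$ except multiplied by the vanishing prefactor $\widetilde{f}(y_*(x))$, so that the equations reduce to genuine first-order transport equations of the form handled by Proposition \ref{prop:transport}. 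Reality of each $\lambda_k$ will follow from self-adjointness of $T_N(f)$ and Hermiticity of the Bergman kernel combined with the uniqueness of the formal expansion once $\varphi$ and the normalisation $u_0(0) = 1$ are fixed.
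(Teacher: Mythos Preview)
Your proposal is correct and follows essentially the same strategy as the paper: rewrite $T_N(f)(\psi_0^N e^{N\varphi} u(N))$ via the Bergman kernel expansion of Proposition~\ref{prop:Szeg-gen}, apply stationary phase with the phase of Proposition~\ref{prop:phase:well}, and solve order by order using Proposition~\ref{prop:transport}, fixing each $\lambda_k$ by the compatibility condition at $x=0$. The one place where the paper argues more carefully than you do is the identification $\lambda_0=\min\Sp(T_1(\Hess f(0)))$: rather than asserting that the flat-model computation in \cite{deleporte_low-energy_2016} directly matches the expression $\widetilde{\Delta}(0)(\widetilde{f}a_0J)(0)$, the paper observes that when $f$ and $\phi$ are quadratic the WKB ansatz is \emph{exact}, so $\lambda_0$ is some eigenvalue of $T_1(\Hess f(0))$ depending continuously on the Hessian; it then checks that for $f=|y|^2$ the solution is $\varphi=0$, the eigenvector is the coherent state $\psi_0^N$, and hence $\lambda_0$ is the ground-state energy in that case, and finally invokes connectedness of the cone of positive definite quadratic forms together with the spectral gap to conclude that $\lambda_0$ remains the minimum for every Hessian. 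Your deferral to the reference would need this continuity-plus-connectedness step (or an explicit computation for a general quadratic) to be complete.
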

  \begin{proof}
    Recall that, by Proposition \ref{prop:Szeg-gen}, there exists an
    analytic symbol $a$ and constants $c>0, c'>0$ such that 
    \[
      S_N(x,y)=N^d\Psi^{\otimes
        N}(x,\overline{y})\sum_{k=0}^{cN}N^{-k}a_k(x,\overline{y})+O(e^{-c'N}).
    \]
    In particular,
    \[
      \psi_0^N(x)=N^d\Psi^{\otimes N}(x,0)\sum_{k=0}^{cN}N^{-k}a_k(x,0)+O(e^{-c'N}).
    \]
    Let $(u_k)_{k\in \N}$ be a sequence of holomorphic functions on
    $U$ and let
    \[
      u(N)=\sum_{k=0}^KN^{-k}u_k.
      \]
    With $a(N)=\sum_{k=0}^{cN}a_k$, by definition of $\Phi_1$,
    one has, uniformly for $x\in W$,
\begin{multline*}
T_N(f)\left(\mathds{1}_V\psi_{0}^Ne^{N\varphi}u(N)\right)(x)\\=\psi_{0}^N(x)e^{N\varphi(x)}\int_{y\in V}
e^{N(\Phi_1(x,y,\overline{y},0)+\varphi(y)-\varphi(x))}\frac{a(N)(x,\overline{y})}{a(N)(x,0)}a(N)(y,0)\widetilde{f}(y,\overline{y})u(N)(y)\dd
y+O(e^{-c'N}).
\end{multline*}
We are now able to apply the complex stationary phase Lemma (with
analytic phase but, at this stage, smooth symbol, as in \cite{melin_fourier_1975}). Let $*$ denote the Cauchy product of symbols, and let $b$ be the analytic symbol such that
\[
b(x,y,\overline{w})=\widetilde{f}(y,\overline{w})a(x,\overline{w})*a^{*-1}(x,0)*a(y,0)J(x,y,\overline{w}),
\]
  where $J$ is the Jacobian of the change of variables $\kappa_x$
  defined in \eqref{eq:defkappa}. One has
\begin{equation}
  \label{eq:Conj-TN-Phase}
  e^{-N\varphi(x)}T_N(f)\left(\psi_{0}^Ne^{N\varphi}u\right)(x)\\=\psi_{0}^N(x)\sum_{k=0}^{+\infty}N^{-k}\sum_{n=0}^k\left.\cfrac{\Delta_{\kappa_x}^n}{n!}(u(y)b_{k-n}(x,y,\overline{w}))\right|_{(y,\overline{w})=(x,\overline{y_c}(x))}+O(N^{-\infty}).
\end{equation}
Using Proposition \ref{prop:transport} with
\[
  f':(x,y,\overline{w})\mapsto b_0(x,y,\overline{w}),
  \] which indeed coincides with $f$ up to $O(|x,y,\overline{w}|^3)$, we will construct by induction a
sequence of holomorphic functions $u_i$ and a sequence of real numbers
$\lambda_i$ such that
\begin{equation}
  \label{eq:eigval}
  T_N(f)\left(\psi_{0}^Ne^{N\varphi}\sum_{k=0}^{+\infty}N^{-k}u_k\right)(x)=\psi_{0}^N(x)e^{N\varphi(x)}\left(\sum_{j=0}^{+\infty}N^{-j-1}\lambda_j\right)\left(\sum_{k=0}^{cN}N^{-k}u_k(x)\right)+O(N^{-\infty}).
\end{equation}
We further require that
\[
  u_k(0)=\begin{cases}1&\text{if }k=0\\
    0&\text{else.}
  \end{cases}
\]

In the right-hand side of \eqref{eq:eigval}, there are no terms of
order $0$. In the left-hand side, the term of degree $0$ is given by
the term $k=0$ in \eqref{eq:Conj-TN-Phase}, so that one needs to solve
\[
  \widetilde{f}(x,\overline{y}_c(x))u_0(x)\cfrac{a_0(x,0)}{a_0(y,0)}\,a_0(x,\overline{y})J(x,x,\overline{y}_c(x))=b_0(x,x,\overline{y}_c(x))u_0(x)=0.
\]

Since $\widetilde{f}(x,\overline{y}_c(x))=0$, this equation is always satisfied.

By the stationary phase lemma \eqref{eq:Conj-TN-Phase}, the order $1$ in \eqref{eq:eigval} reads
\begin{equation}
  \label{eq:u0}
  \lambda_0u_0(x)-(\Delta_{\kappa_x}b_0)(x,x,\overline{y}_c(x))u_0(x)-(\nabla_{\kappa_x}b_0)(x,x,\overline{y}_c(x))\cdot
  \nabla_{\kappa_x}u_0(x)=0.
\end{equation}
 Here, and until the end of this proof as well as that of
 Proposition~\ref{prop:Wells-WKB}, we (informally) denote
 \[
   \nabla_{\kappa_x}u_k(x)=\nabla_{\kappa_x}[(x,y,\overline{w})\mapsto
   u_k(y)]_{(y,\overline{w})=(x,\overline{y}_c(x))}.\]
  
The equation \eqref{eq:u0} allows us to solve for $u_0$ with the
supplementary condition $u_0(0)=1$. Indeed, as
$\nabla_{\kappa_x}b_0(0)=0$, at $x=0$, the order $1$ reads 
\[
  \lambda_0-(\Delta_{\kappa_x}b_0)(0,0,0)=0,
\]
so that we set
\[
  \lambda_0=(\Delta_{\kappa_x}b_0)(0,0,0).
\]

We now prove that $\lambda_0$  coincides with the ground state energy of the
associated quadratic operator $T_N(\Hess(f)(0))$. Indeed, $\lambda_0$ depends only on the Hessian
of $f$ and $\phi$ at zero (which together determine the Hessian of
$\varphi$ at zero as seen in Proposition
\ref{prop:Hamilton-Jacobi-sol}, thus they determine the linear part of
the change of variables $\kappa_0$, which in turn
determines $\Delta_{\kappa_0}$ and $J$ at $0$). If $f$ and $\phi$ are quadratic, then the solution $\varphi$ of
the Hamilton-Jacobi equation is also quadratic as constructed in Proposition \ref{prop:Hamilton-Jacobi-sol}, so that $u_0=1$
satisfies \eqref{eq:eigval} exactly. Thus, $\lambda_0$ is an eigenvalue
of $T_N(\Hess(f)(0))$ which depends continuously on
$\Hess(f)(0)$. Moreover, if
$\Hess(f)(0):y\mapsto |y|^2$, then $\Hess(\varphi)=0$ so that the
eigenvector of $T_N(\Hess(f)(0))$ associated with $\lambda_0$ is the
coherent state (in $\C^d$) $\psi^N_0$, which is the ground state of
$T_N(|y|^2)$; thus in this case $\lambda_0$ is the ground state
energy. Since the set of positive definite quadratic forms in
$\R^{2d}$ is connected, and since there is always a gap between the
ground state energy and the first excited level, then $\lambda_0$ is
always the ground state energy of $T_N(\Hess(f)(0))$.

We wish now to find $u_0$ such that $u_0(0)=1$. Setting $v_0=u_0-1$ yields
\[
  \nabla_{\kappa_x}v_0(x)\cdot
  (\nabla_{\kappa_x}b_0)(x,x,\overline{y}_c(x))=v_0(x)\left[(\Delta_{\kappa_x}b_0)(x,x,\overline{y}_c(x))-(\Delta_{\kappa_x}b_0)(0,0,0)\right].
\]
We then solve for $v_0$ using Proposition \ref{prop:transport} with $f'=b_0$, which
indeed yields $v_0(0)=0$.

Let us now find the remaining terms of the sequences $(u_k)_{k\geq 0}$
and $(\lambda_k)_{k\geq 0}$ by induction. For $k\geq 1$, the term of order $k+1$ in \eqref{eq:eigval} is given
again by the stationary phase lemma \eqref{eq:Conj-TN-Phase}: at this order, the equation is
  \begin{multline}
    \label{eq:Induction-uk}
    \lambda_ku_0(x)+\lambda_0u_k(x)-(\Delta_{\kappa_x}b_0)(x,x,\overline{y}_c(x))u_k(x)-
    (\nabla_{\kappa_x}b_0)(x,x,\overline{y}_c(x))\cdot
    \nabla_{\kappa_x}u_k(x)\\=-\sum_{j=1}^{k-1}\lambda_ju_{k-j}(x)
    +\sum_{n=2}^{k+1}\sum_{l=0}^{k+1-n}\left.\cfrac{\Delta_{\kappa_x}^{n}}{n!}\left(u_{l}(y)b_{k+1-n-l}(x,y,\overline{w})\right)\right|_{(y,\overline{w})=(x,\overline{y_c}(x))}.
  \end{multline}
  In this equation, we have put to the left-hand side all terms
  involving $\lambda_k$ or $u_k$, and all terms involving $\lambda_l$
  and $u_l$ with $l<k$ to the
  right-hand side. We can apply Proposition
  \ref{prop:transport} to solve for $u_k,\lambda_k$ once
  $(u_l,\lambda_l)_{0\leq l\leq k-1}$ are known. Indeed,
\eqref{eq:Induction-uk} takes the form
\begin{equation}\label{eq:u_given_by_g}
  (\nabla_{\kappa_x}b_0)(x,x,\overline{y}_c(x))\cdot
  \nabla_{\kappa_x}u_k(x)=g_k(x)+h(x)u_k(x),
\end{equation}
with $h(x)=\Delta_{\kappa_x}b_0(x,x,\overline{y}_c(x))-\lambda_0$
and
\begin{equation}\label{eq:g_given_by_u}
  g_k(x)=-\sum_{l=1}^{k-1}\lambda_lu_{k-l}(x)-\lambda_ku_0+
  \sum_{n=2}^{k+1}\sum_{l=0}^{k+1-n}\left.\frac{\Delta^n_{\kappa_x}}{n!}(u_l(y)b_{k+1-n-l}(x,y,\overline{w}))\right|_{(y,\overline{w})=(x,\overline{y}_c(x))}.
\end{equation}
By construction of $\lambda_0$, one has $h(0)=0$; moreover,
\[
  g_k(0)=\sum_{n=2}^{k+1}\sum_{l=0}^{k+1-n}\left.\frac{\Delta^n_{\kappa_0}}{n!}(u_l(y)b_{k+1-n-l}(0,y,\overline{w}))\right|_{(y,\overline{w})=(0,0)}-\lambda_k.
\]
Thus, one can solve for $\lambda_k$ by setting $g_k(0)=0$, then solve
for $u_k$ using Proposition \ref{prop:transport}: the role of $f'$ is
played by $b_0$, which does not depend on $k$. Thus, letting $U$ be as
in Proposition \ref{prop:transport}, one can, by induction on $k$,
define $g_k$ as a holomorphic function on $U$ using
\eqref{eq:g_given_by_u}, then $u_k$ as a holomorphic function on $U$
using \eqref{eq:u_given_by_g}.
\end{proof}

It remains to prove that, because of Proposition \ref{prop:transport},
the coefficients $(u_k)_{k\geq 0}$ and $(\lambda_k)_{k\geq 0}$ satisfy
analytic growth controls.

\begin{prop}\label{prop:Wells-WKB}
  Let $(u_k)_{k\geq 0}$ and $(\lambda_k)_{k\geq 0}$ be the sequences
  constructed in the previous proposition. Then there exist $C>0$, $R>0$,
  $r>0$, $m\in \R$ and an open set $V\subset\subset U$ containing $0$
  such that, for all $k\geq 0,j\geq 0$, one has
  \begin{align*}
    \|u_k\|_{C^j(V)}&\leq C\frac{r^jR^kj!k!}{(j+k+1)^m}\\
    |\lambda_k|&\leq C\frac{R^kk!}{(k+2)^m}.
  \end{align*}
  Moreover, if $j\geq k$, then
  \[
    \|u_k\|_{C^j(V)}\leq C\frac{(r/4)^jR^k(j+k)!}{(j+k+1)^{m-\frac 12}}.
    \]
\end{prop}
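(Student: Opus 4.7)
The plan is to proceed by strong induction on $k$, where at each step we use Proposition \ref{prop:transport} to produce $u_k$ from a source $g_k$ built out of the previously constructed data. For the base case $k = 0$, the function $v_0 = u_0 - 1$ (with $v_0(0) = 0$) solves a transport equation in which both the coefficient $h$ and the source $g$ are real-analytic and vanish at $0$, so Proposition \ref{prop:transport} directly yields the analytic bound on $u_0$, while $\lambda_0$ is a bounded real number determined explicitly at $0$.

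For the induction step, suppose the estimates hold for all $u_l, \lambda_l$ with $l \leq k-1$. Recall from the proof of Proposition \ref{prop:WKB-formal} that $u_k$ solves
\[
(\widetilde{\nabla}(x) b_0)(x,x,\overline{y}_c(x)) \cdot \widetilde{\nabla}(x) u_k(x) = g_k(x) + h(x) u_k(x),
\]
with $h(0) = 0$, where
\[
g_k(x) = -\sum_{l=1}^{k-1} \lambda_l u_{k-l}(x) - \lambda_k u_0(x) + \sum_{n=2}^{k+1} \sum_{l=0}^{k+1-n} \frac{\widetilde{\Delta}^n(x)}{n!}\bigl(u_l(y) b_{k+1-n-l}(x,y,\overline{w})\bigr)\Big|_{(y,\overline{w}) = (x,\overline{y}_c(x))},
\]
and $\lambda_k$ is determined by the condition $g_k(0) = 0$. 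The strategy is to show that $g_k$ lies in the analytic class with weight $r^j R^k (j+k+1)!/(j+k+2)^m$; then Proposition \ref{prop:transport}, applied with parameter equal to our induction index $k$, precisely converts this weight into $r^j R^k (j+k)!/(j+k+1)^m$, giving the desired bound on $u_k$.

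The bound on $g_k$ decomposes into two contributions. The sum $\sum_l \lambda_l u_{k-l}$ is a Cauchy product of the sequences $(\lambda_l)$ and $(u_l)$, where the weight on $(\lambda_l)$ is shifted by one relative to the standard weight on $(u_l)$; by the inductive hypothesis and the product estimate of Proposition \ref{prop:anal-symb}, this lies in the target analytic class with a constant that is uniform in $k$. The double sum involving iterated Laplacians is exactly the analytic stationary-phase expansion of the product $u \cdot b$, where $b$ is the analytic symbol of the Bergman kernel from Proposition \ref{prop:Szeg-gen}; the analytic stationary phase machinery of \cite{deleporte_toeplitz_2018} guarantees that when $(u_l)_{l < k}$ is an analytic symbol with the correct weight, the output is again an analytic symbol with the correct weight. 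Reading off the $j = 0$ bound on $g_k$ then yields $|\lambda_k| \leq C R^k (k+1)!/(k+2)^m$.

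The main obstacle is the combinatorial bookkeeping required to close the induction with a single uniform constant $C$ over infinitely many steps. Each application of Proposition \ref{prop:transport} and each Cauchy product in Proposition \ref{prop:anal-symb} introduces multiplicative factors of the form $C(d) \cdot 3^m/4^m$; the freedom to choose $m$ arbitrarily large (emphasized by the author after the statement of Proposition \ref{prop:Szeg-gen}) makes these factors as small as desired, so that one can pick $m$, then $r$ and $R$ large enough that the factors compensate all constants arising from Proposition \ref{prop:transport} and from the stationary phase expansion. The induction then closes with a uniform constant $C$.
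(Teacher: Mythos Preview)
Your high-level induction scheme matches the paper's, and you correctly identify the $3^m/4^m$ mechanism as what allows the constants to be absorbed. However, your treatment of the iterated-Laplacian sum has a genuine gap.

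You assert that ``the analytic stationary phase machinery of \cite{deleporte_toeplitz_2018} guarantees'' that the double sum over $n,l$ lies in the target analytic class. No such black box applies here: the stationary-phase results in that reference control the expansion of a \emph{fixed} analytic symbol, whereas you need that, assuming the bound on $(u_l)_{l<k}$ with constant $C_u$, the $(k{+}1)$-th term of the expansion of $u\cdot b$ satisfies the target bound with a constant no larger than $\epsilon C_u$. The paper carries this out by explicit combinatorics (its Second Step), and for those combinatorics to close it first needs a structural observation you omit entirely (its First Step): because each $u_l$ is holomorphic in $y$, the operator $\widetilde{\Delta}^n(x)$ effectively differentiates $u_l$ at most $n$ times, not the naive $2n$ times. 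This is shown by writing $g(y,\overline y)=g(y,\overline y_c(x))+\partial_y\Phi\cdot g_1$ and integrating by parts, using that $T_N(g)$ acts as multiplication when $g$ is holomorphic. Without this reduction from $2n$ to $n$ derivatives, the key factorial estimates of the form $(n+l)!(k{+}1{-}n{-}l)!/(k{+}1)!=\binom{k+1}{n+l}^{-1}\le 1$ break down (one would instead face $(2n+l)!$, which can far exceed $(k{+}1)!$), and the induction does not close regardless of how large $m$ is chosen.

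A secondary omission: Proposition \ref{prop:transport} only delivers control on the derivatives of $u_k$ \emph{at the origin}, so the induction in the paper is run entirely at $x=0$. A separate final step is then required to upgrade this to a genuine $C^j$ bound on a fixed neighbourhood $V$, by summing the power series of each $u_k$ on a polydisk of radius comparable to $r^{-1}$.
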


\begin{proof}
The proof proceeds by induction on $k$ and consists in
three steps. In the first step, we show that in equation
\eqref{eq:Induction-uk} (that is, in the definition of $g_k$), when
expanding $\Delta^n_{\kappa_x}(u_lb_{k+1-n-l})$, no derivatives of $u_l$ of order larger than
$n$ appear. This will allow us to apply Lemma
\ref{lem:new_lemma_4.6}. The second step is the core of the induction:
we suppose some control
on all derivatives of $u_l$ at zero, for $0\leq l\leq k-1$, and we
apply Lemma \ref{lem:new_lemma_4.6} to deduce that the derivatives of
$g_k$ at zero are well-behaved. We then apply Proposition
\ref{prop:transport} to obtain a control on the derivatives of
$u_{k}$ at zero. In the last step, we deduce, from a control of the
derivatives of $u_k$ at zero, a control of the same nature on a small
open neighbourhood.

{\bfseries First step.}

Let $f_0$ be a holomorphic function near $0$ in $M$. Then $T_N(f_0)$ is,
locally, a multiplication operator, so that, for all holomorphic $u$, \[e^{-N\varphi}T_N(f_0)(\psi_{0}^Ne^{N\varphi}u)=\psi_{0}^Nf_0u+O(e^{-c'N}).\]
In this particular case, no derivative of $u$ of order $\geq
1$ appear in \eqref{eq:Conj-TN-Phase}, hence in \eqref{eq:Induction-uk}.

We then
decompose the real-analytic function $f$ as \[\widetilde{f}(y,\overline{y})=\widetilde{f}(y,\overline{y}_c(x))+\left(\widetilde{f}(y,\overline{y})-\widetilde{f}(y,\overline{y}_c(x))\right).\]
In the right-hand side, the
second term vanishes when $\overline{y}=\overline{y}_c(x)$, so that, with
\[
  \Phi:(x,y,\overline{w})\mapsto \Phi_1(x,y,\overline{w},0)+\varphi(y)-\varphi(x),
\]
there exists a smooth vector-valued function $f_1$ such that
\[
  \widetilde{f}(y,\overline{y})=\widetilde{f}(y,\overline{y}_c(x))+\partial_y\Phi(x,y,\overline{y})\cdot
  f_1(x,y,\overline{y}).
\]
Now $S_N$ acts as the identity on holomorphic functions and
$\overline{y}_c$ is a holomorphic function of $x$ so that, by
integration by parts:
\begin{multline*}
  \int
  e^{-N\Phi(x,y,\overline{y})}a(N)(x,\overline{y})\widetilde{f}(y,\overline{y})u(y)\dd
  y\\ = \psi_0^N(x)\widetilde{f}(x,\overline{y}_c(x))u(x)+\int
  e^{-N\Phi(x,y,\overline{y})}a(N)(x,\overline{y})\partial_y\Phi(x,y,\overline{y})\cdot
  f_1(x,y,\overline{y})\,u(y)\dd
  y+O(e^{-c'N})\\
  =\psi_0^N(x)\widetilde{f}(x,\overline{y}_c(x))u(x)+N^{-1}\int
  e^{-N\Phi(x,y,\overline{y})}a(N)(x,\overline{y})\partial_y\left[f_1(x,y,\overline{y})u(y)\right]\dd y+O(e^{-c'N}).
\end{multline*}
In particular, in the term of order $N^{-1}$ in
\eqref{eq:Conj-TN-Phase}, there only are derivatives of $u$ of order
$0$ or $1$.

One can in fact perform this decomposition iteratively: with
\[
  \partial_y\left[f_1(x,y,\overline{y})u(y)\right]=\partial_y\cdot
  f_1(x,y,\overline{y})u(y)+f_1(x,y,\overline{y})\partial u(y),
\]
one can write
\begin{align*}
  f_1(x,y,\overline{y})&=f_1(x,y,\overline{y}_c(x))+\partial_y\Phi(x,y,\overline{y})\cdot
                         f_{2,0}(x,y,\overline{y})\\
  \partial_yf_1(x,y,\overline{y})&=\partial_yf_1(x,y,\overline{y}_c(x))+\partial_y\Phi(x,y,\overline{y})\cdot
                         f_{2,1}(x,y,\overline{y}),
\end{align*}
so that the original integral is equal to
\begin{multline*}
  \psi_0^N(x)\widetilde{f}(x,\overline{y}_c(x))u(x)\\+N^{-1}\psi_0^N(x)\left[f_1(x,x,\overline{y}_c(x))\partial
    u(x)+\partial_yf_1(x,x,\overline{y}_c(x))u(x)\right]\\+N^{-2}\int
  e^{-N\Phi(x,y,\overline{y})}a(N)(x,\overline{y})\partial_y[f_{2,0}(x,y,\overline{y})\partial
  u(y)+f_{2,1}(x,y,\overline{y})u(y)]\dd y \\+ O(e^{-c'N}).
\end{multline*}

  By induction, the terms of order $N^{-k}$ in the expansion
  \eqref{eq:Conj-TN-Phase} only contain derivatives of $u$ of order
  smaller than $k$. This means in particular that, in
  \eqref{eq:Induction-uk}, in
  \[
\left.
  \Delta^n_{\kappa_x}(u_l(y)b_{k+1-n-l}(x,y,\overline{w}))\right|_{(y,\overline{w})=(x,\overline{y}_c(x))},
\]
there only appears derivatives of $u_l$ of order less or equal to $n$.


  {\bfseries Second step.}

  Let us prove by induction that the sequences $(u_k)_{k\geq 0}$ and
  $(\lambda_k)_{k\geq 0}$ are analytic symbols. We will make use of the precise controls obtained in Proposition
  \ref{prop:transport}.
  Since $(b_k)_{k\geq 0}$ is an analytic symbol and $u_0$ is holomorphic, by
  Proposition \ref{prop:anal-symb} there exists a small open
  neighbourhood $W$ of zero in $\C^d$, and a small open neighbourhood
  $W_1$ of $0$ in $\C^{3d}$, and
  $r_0,R_0,m_0,C_b,C_0>0$ such that
  \begin{align*}
    \|(x,v_1,v_2)\mapsto b_k(x,\kappa^{-1}_x(v_1,v_2))\|_{C^j(W_1)}&\leq C_b\cfrac{r_0^jR_0^k(j+k)!}{(j+k+1)^{m_0}}\\
    \|u_0\|_{C^j(W)}&\leq C_0\cfrac{r_0^jj!}{(j+1)^{m_0}}\\
    \|\kappa^{-1}\|_{C^j(W_1)}&\leq C_{\kappa}\frac{r_0^jj!}{(j+1)^{m_0}}.
  \end{align*}
Here, and the rest of this proof we again denote by $\kappa^{-1}$ the map $(x,v_1,v_2)\mapsto
  (x,\kappa^{-1}_x(v_1,v_2))$.
  
  Let us transform this into a control on $b_k$ which is more suited
  to our needs. First, for all $j$ and $k$, one has
  \[
    \|b_k\circ \kappa^{-1}\|_{C^j(W_1)}\leq C_b\cfrac{(4r_0)^j(4R_0)^kj!k!}{(j+k+1)^{m_0+1}}.
  \]
  Indeed $(j+k)!\leq 2^{j+k}j!k!$ and $2^{j+k}\geq j+k+1$. In
  particular,
  \[
    \|b_0\circ \kappa^{-1}\|_{C^j(W_1)}\leq C_b\cfrac{(4r_0)^jj!}{(j+1)^{m_0+1}}\leq C_b\cfrac{(4r_0)^jj!}{(j+1)^{m_0}}.
  \]
  On the other hand, for $k\geq 1$, one has
  \[
    \|b_k\circ \kappa^{-1}\|_{C^j(W_1)}\leq
    C_b\cfrac{(4r_0)^j(4R_0)^kj!(k-1)!}{(j+k+1)^{m_0}},
  \]
  since $\frac{k}{j+k+1}\leq 1$.

  In particular, for any $m\geq m_0$, for any $r\geq 2^{m+5-m_0}r_0$ and
  $R\geq 2^{m+2-m_0}R_0$, one has
  \begin{align*}
    \|b_0\circ \kappa^{-1}\|_{C^j(W_1)}&\leq C_b\cfrac{(r/8)^jj!}{(j+1)^m}\\
    \|b_k\circ \kappa^{-1}\|_{C^j(W_1)}&\leq
                        C_b\cfrac{(r/8)^jR^kj!(k-1)!}{(j+k+1)^m}\qquad
    \qquad k\geq 1\\
    \|u_0\|_{C^j(W)}&\leq C_0\cfrac{(r/32)^jj!}{(j+1)^{m}}\leq
                      C_0\frac{(r/4)^jj!}{(j+1)^{m-\frac 12}}\\
    \|\kappa^{-1}\|_{C^j(W_1)}&\leq
    C_{\kappa}\frac{(r/16)^{j}(j-1)!}{j^m}\qquad \qquad j\geq 1.
  \end{align*}
  
  In equation \eqref{eq:Induction-uk}, let us isolate the terms
  involving $u_0$. We obtain
  \begin{multline}\label{eq:rec_nice_form}
    \lambda_ku_0(x)+\lambda_0u_k(x)-\Delta_{\kappa_x}b_0(x,x,\overline{y}_c(x))u_k(x)-
    \nabla_{\kappa_x}b_0(x,x,\overline{y}_c(x))\cdot
    \nabla_{\kappa_x}u_k(x)\\=
    \sum_{n=2}^{k+1}\left.\cfrac{\Delta^n_{\kappa_x}}{n!}\left(u_{0}(y)b_{k+1-n}(x,y,\overline{w})\right)\right|_{(y,\overline{w})=(x,\overline{y}_c(x))}\\-\sum_{j=1}^{k-1}\lambda_ju_{k-j}(x)+\sum_{n=2}^{k+1}\sum_{l=1}^{k+1-n}\left.\cfrac{\Delta_{\kappa_x}^{n}}{n!}\left(u_{l}(y)b_{k+1-n-l}(x,y,\overline{w})\right)\right|_{(y,\overline{w})=(x,\overline{y}_c(x))}.
  \end{multline}
  Let $m,r,R,C_u,C_{\lambda}$ be large enough (they will be fixed
  in the course of the induction), and suppose that, for all $0\leq l\leq k-1$ and all
  $j\geq 0$, one has
  \begin{align}\label{eq:ctrl_lambda_4.2}
    |\lambda_l|&\leq C_{\lambda}\cfrac{R^ll!}{(l+2)^m}\\\label{eq:ctrl_u_4.2}
    \|\nabla^ju_l(0)\|_{\ell^1}&\leq C_u\cfrac{r^jR^lj!l!}{(j+l+1)^m}.
  \end{align}
  Suppose further that for $j\geq l$ one has the more precise control
  \begin{equation}\label{eq:ctrl_u_sharp_4.2}
    \|\nabla^ju_l(0)\|_{\ell^1}\leq
    C_u\cfrac{(r/4)^jR^l(j+l)!}{(j+l+1)^{m-\frac 12}}.
  \end{equation}
  Our goal is now to prove the three inequalities
  \eqref{eq:ctrl_lambda_4.2}, \eqref{eq:ctrl_u_4.2}, and \eqref{eq:ctrl_u_sharp_4.2},  in the case
  $l=k$.
  
  To begin with,
  we estimate how the iterated modified Laplace operator
  $\Delta_{\kappa_x}^n$ acts on $u_{l}$ using the fact that the former
  differentiates the latter at most $n$ times (Part 1) and Lemma \ref{lem:new_lemma_4.6}.

  After a change of variables $\kappa_x:(y,\overline{w})\mapsto
  v(x,y,\overline{w})=(v_1(x,y,\overline{w}),\overline{v_2}(x,y,\overline{w}))$ for which the phase is
  the holomorphic extension of the standard quadratic form $-|v|^2$,
  one has, by definition,
  \[
    \Delta_{\kappa_x}=\Delta_v=\sum_{i=1}^{d}\cfrac{\partial^2}{\partial v_{1,i}\partial\overline{v_{2,i}}}.
  \]
  Hence, denoting the inverse change of variables by $(x,v)\mapsto
  (x,y(x,v),\overline{w}(x,v))$, we obtain
  \begin{multline*}
   \left.\cfrac{\Delta^n_{\kappa_x}}{n!}\left(u_{0}(y)b_{k+1-n}(x,y,\overline{w})\right)\right|_{(y,\overline{w})=(x,\overline{y}_c(x))}\\
    =\sum_{|\mu|=n}\sum_{\nu\leq 2\mu}\cfrac{n!(2\mu)!}{\mu!\nu!(2\mu-\nu)!}\,\partial^{\nu}_{v}u_l(y(x,v))|_{v=0}\partial^{2\mu-\nu}_vb_{k+1-n-l}(x,y(x,v),\overline{w}(x,v))|_{v=0}.
  \end{multline*}
  Since at most $n$ derivatives on $u_l$ appear in \eqref{eq:rec_nice_form} by the
  first step, in the
  expression above, the differential operator
  \[\partial^{\nu}_vu_l(y(x,v,\overline{v}))_{v=0}\] can be replaced with its
  truncation into a differential operator of degree less or equal to
  $n$, which we denote by $(\nabla^{\nu}_{\kappa})^{[\leq n]}u_l(x)$ as
  in \cite{deleporte_toeplitz_2018}, Lemma 4.6.
  In particular, for every $\rho\in \N^d$,
  \begin{multline*}
    \nabla^{\rho}_x\Delta_v^n[u_l(y(v,\overline{v}))b_{k+1-n-l}(x,y(v,\overline{v}),\overline{w}(v,\overline{v}))]_{v=0}=\\
    \sum_{|\mu|=n}\sum_{\nu\leq 2\mu}\sum_{\rho_1\leq
      \rho}\cfrac{n!(2\mu)!\rho!}{\mu!\nu!(2\mu-\nu)!\rho_1!(\rho-\rho_1)!}\,\nabla^{\rho_1}_x(\nabla^{\nu}_{\kappa})^{[\leq n]}u_l(x)\nabla^{\rho-\rho_1}_x\nabla^{2\mu-\nu}_vb_{k+1-n-l}(x,y(x,v,\overline{v}),\overline{w}(x,v,\overline{v}))_{v=0}.
  \end{multline*}
  Moreover, if $|\mu|=n$ then
  \[\cfrac{n!}{\mu!}\leq (2d)^n,
  \]
  and if $\nu\leq 2\mu$ then, by Lemma 2.4 in \cite{deleporte_toeplitz_2018},
  \[
    \cfrac{(2\mu)!\rho!}{\nu!(2\mu-\nu)!\rho_1!(\rho-\rho_1)!}=\binom{2\mu}{\nu}\binom{\rho}{\rho_1}\leq \binom{2n}{|\nu|}\binom{|\rho|}{|\rho_1|}.\]
  Hence,
  \begin{multline*}
    \|\nabla^j_x\Delta_v^n[u_l(y(v,\overline{v}))b_{k+1-n-l}(x,y(v,\overline{v}),\overline{y}(v,\overline{v}))]_{v=x=0}\|_{\ell^1}\\
    \leq
    (2d)^n\sum_{i_1=0}^{2n}\sum_{j_1=0}^j\binom{2n}{i_1}\binom{j}{j_1}\|\nabla^{j_1}_x(\nabla^{i_1}_{\kappa})^{[\leq
      n]}u_l|_{x=0}\|_{\ell^1}\|b_{k+1-n-l}\circ \kappa^{-1}\|_{C^{j-j_1+2n-i_1}(W_1)}.
  \end{multline*}
  By the induction hypothesis, one has
  \[
    \|\nabla^ju_l(0)\|_{\ell^1}\leq
    C_u\cfrac{r^jR^lj!l!}{(j+l+1)^m},
    \]
then, by Lemma \ref{lem:new_lemma_4.6}, there exists a
fixed $C_{\kappa}>0$ such that
  \begin{multline}
    \label{eq:ctrl-der-u}
    \|\nabla^{j_1}_x(\nabla^{i_1}_{\kappa})^{[\leq n]}u_l(y(v,\overline{v}))_{x=v=0}\|_{\ell^1}\\
    \leq
    i_1^{d+1}j_1^{d+1}C_u\cfrac{r^{j_1+i_1}R^ll!}{(i_1+j_1+l+1)^m}(C_{\kappa})^{i_1}\times \begin{cases}\max((n+j_1)!(i_1-n)!\;,\;j_1!i_1!)&\text{if
      }i_1\geq n\\(i_1+j_1)!&\text{else}.
    \end{cases}
  \end{multline}
  
  If $j_1+\min(i_1,n)\geq l$, one has the more precise control
   \begin{multline}
    \label{eq:ctrl-der-u-prec}
    \|\nabla^{j_1}_x(\nabla^{i_1}_{\kappa})^{[\leq n]}u_l(y(v,\overline{v}))_{x=v=0}\|_{\ell^1}\\
    \leq
    i_1^{d+1}j_1^{d+1}C_u\cfrac{(r/4)^{j_1+i_1}R^l}{(i_1+j_1+l+1)^{m-\frac
      12}}(C_{\kappa})^{i_1}\times \begin{cases}\max((n+j_1+l)!(i_1-n)!\;,\;(j_1+l)!i_1!)&\text{if
      }i_1\geq n\\(i_1+j_1+l)!&\text{else}.
    \end{cases}
  \end{multline}
 In the case $l=0$, the constant $C_u$ can be replaced with the smaller constant
 $C_0$.

 Let us now control $\lambda_k$ using equation
    \eqref{eq:Induction-uk} at $x=0$:
    \[
      \lambda_k=\left.\sum_{n=2}^{k+1}\sum_{l=0}^{k+1-n}\cfrac{\Delta_{\kappa_0}^n}{n!}(u_l(y)b_{k+1-n-l}(0,y,\overline{w}))\right|_{(y,\overline{w})=(0,0)}.
    \]
    Then, by the induction hypothesis, \eqref{eq:ctrl-der-u}, and the
    fact that
    \[
      \|\nabla^{j_1}(b_l\circ \kappa^{-1})\|_{\ell^1}\leq
      C_b\frac{r^{j_1}R^lj_1!(l-1+\mathds{1}_{l=0})!}{(1+j_1+l)^m},
    \]
    we obtain
    \begin{multline*}
      |\lambda_k|\leq
      C_uC_b\sum_{n=2}^{k+1}\cfrac{R^kk!}{(k+2)^m}(2d)^nR\left(\cfrac{C_{\kappa}^2r^2}{R}\right)^n\times\\\sum_{i_1=0}^{2n}\frac{(2n)!A(i_1,0,n)}{i_1!n!k!}\left(\sum_{l=0}^{k-n}\cfrac{(k-n-l)!l!(k+2)^m}{(i_1+l+1)^m(k+2+n-l-i_1)^m}+\cfrac{(k+1-n)!(k+2)^m}{(i_1+k-n+2)^m(1+2n-i_1)^m}\right),
    \end{multline*}
    with
    \[
      A(i_1,j_1,n)=\begin{cases}\max((n+j_1)!(i_1-n)!\;,\;j_1!i_1!)&\text{if
        }i_1\geq n\\(i_1+j_1)!&\text{else};
      \end{cases}
    \]
    in the sum above, we separated the case $l=k+1-n$, corresponding
    to the specific control on $b_0$.
    
    For $l\leq k-n$, one has
    \[
      \cfrac{(2n)!l!A(i_1,0,n)(k-n-l)!}{i_1!n!k!}\leq
      4^n.\]
    Indeed, in this case where $j_1=0$, one has always
    $n!(i_1-n)!\leq i_1!$ if $i_1\geq n$, so that
    $A(i_1,0,n)=i_1!$ in all cases. We obtain
    \[
      \cfrac{(2n)!l!(k-n-l)!}{n!k!}\leq
      \binom{2n}{n}\frac{n!l!(k-n-l)!}{k!}\leq 4^n.
    \]

    In the specific case $l=k-n+1$, one has similarly
    \[
      \frac{(2n)!(k+1-n)!}{n!k!}\leq 4^n\frac{n!(k+1-n)!}{k!}
    \]
    and the right-hand side is a log-convex function of $n$. At $n=2$
    we obtain
    \[
      32\frac{(k-1)!}{k!}\leq 6^2,
    \]
    and at $n=k+1$,
    \[
      4^{k+1}(k+1)\leq 6^{k+1},
    \]
    so that one has always
    \[
      \frac{(2n)!(k+1-n)!}{n!k!}\leq 6^n.
    \]
    Getting back to the control on $\lambda_k$, we obtain
    \[
      |\lambda_k|\leq C_uC_b\sum_{n=2}^{k+1}\cfrac{R^kk!}{(k+2)^m}(2d)^nR\left(\cfrac{6C_{\kappa}^2r^2}{R}\right)^n\sum_{i_1=0}^{2n}\sum_{l=0}^{k+1-n}\cfrac{(k+2)^m}{(i_1+l+1)^m(k+2+n-l-i_1)^m},
    \]
    Since $(k+2)^m\leq (k+2+n)^m$, one has
    \[
      |\lambda_k|\leq C_uC_b\sum_{n=2}^{k+1}\cfrac{R^kk!}{(k+2)^m}(2d)^nR\left(\cfrac{6C_{\kappa}^2r^2}{R}\right)^n\sum_{i_1=0}^{2n}\sum_{l=0}^{k+1-n}\cfrac{(k+n+2)^m}{(i_1+l+1)^m(k+2+n-l-i_1)^m},
    \]

Then, by Lemma 2.13 in \cite{deleporte_toeplitz_2018}, there holds
\[
  |\lambda_k|\leq
  C_uC_b\cfrac{R^kk!}{(k+2)^m}R\sum_{n=2}^{k+1}\left(\cfrac{12dC_{\kappa}^2
      r^2}{R}\right)^n.
\]

If $R$ is large enough (once $r,m,C_u,C_{\lambda}$ are fixed), then
one can conclude:
\[
  |\lambda_k|\leq C_{\lambda}\frac{R^kk!}{(k+2)^m}.
  \]
  
We now pass to the control on $u_k$. We recall that $u_k$ solves an
equation of the form
\[
  X\cdot u_k=hu_k+g_k,
\]
with $X$ and $h$ independent on $k$ and
\[
g_k:x\mapsto-\sum_{l=1}^{k-1}\lambda_lu_{k-l}(x)-\lambda_ku_0(x)+
  \sum_{n=2}^{k+1}\sum_{l=0}^{k+1-n}\left.\frac{\Delta_{\kappa_x}}{n!}(u_l(y)b_{k+1-n-l}(x,y,\overline{w}))\right|_{(y,\overline{w})=(x,\overline{y}_c(x))}.
\]

We want to prove
\begin{equation}\label{eq:ctrl_g_4.2}
  \|\nabla^jg_k(0)\|_{\ell^1}\leq \epsilon C_u \frac{r^jR^k(j+1)!k!}{(j+k+2)^m}
\end{equation}
and, if $j\geq k$, the more precise control
\begin{equation}\label{eq:ctrl_g_sharp_4.2}
  \|\nabla^jg_k(0)\|_{\ell^1}\leq \epsilon
  C_u\frac{(r/4)^jR^k(j+k+1)!}{(j+k+2)^{m-\frac 12}},
\end{equation}
in order to apply
Proposition \ref{prop:transport}. Here $\epsilon>0$ must be smaller
than $\frac{1}{C(h,b_0,\varphi)}$ in Proposition \ref{prop:transport},
in order to conclude the induction and prove the claimed controls on $u_k$.

One has first
\[
  \|\lambda_k\nabla^ju_0(0)\|_{\ell^1}\leq
  C_{\lambda}C_0\cfrac{(r/4)^jR^kj!k!}{(j+1)^m(k+2)^m}
\]
Once $C_{\lambda}$ and $\epsilon$ are fixed, one has $C_{\lambda}C_0\leq \epsilon C_u$ for
$C_u$ large enough. In particular, one has, for all $j$ and $k$,
\[
  \|\lambda_k\nabla^ju_0(0)\|_{\ell^1}\leq
  \epsilon C_u\cfrac{(r/4)^jR^kj!k!}{(j+1)^{m-\frac 12}(k+2)^m}\leq \epsilon
  C_u\cfrac{r^jR^kj!k!}{(j+k+2)^m},
\]
and for $j\geq k$,
\[
  \|\lambda_k\nabla^ju_0(0)\|_{\ell^1}\leq
  \epsilon C_u\cfrac{(r/4)^jR^kj!k!}{(j+1)^{m-\frac 12}(k+2)^m}\leq \epsilon
  C_u\cfrac{(r/4)^jR^k(j+k)!}{(j+k+2)^{m-\frac 12}}.
\]
Moreover, for all $j$,
\[
  \left\|\sum_{l=1}^{k-1}\lambda_l\nabla^ju_{k-l}(0)\right\|_{\ell^1}\leq
  C_{\lambda}C_u\cfrac{r^jR^{k}j!k!}{(j+k+2)^m}\sum_{l=1}^{k-1}\underbrace{\cfrac{l!(k-l)!}{k!}}_{=\binom{k}{l}^{-1}\leq
    1}\cfrac{(k+j+2)^m}{(l+2)^m(k-l+j+1)^m}.
\]
Hence, by Lemma 2.13 in \cite{deleporte_toeplitz_2018},
\[
  \left\|\sum_{l=1}^{k-1}\lambda_l\nabla^ju_{k-l}(0)\right\|_{\ell^1}\leq CC_{\lambda}C_u\frac{3^m}{4^m}\frac{r^jR^{k}k!j!}{(j+k+2)^m}.
\]
Once $C_{\lambda}$ and $C_u$ are fixed, the constant
$CC_{\lambda}C_u\frac{3^m}{4^m}$ is smaller than $\epsilon C_u$ for
$m$ large enough (and $r,R$ large enough accordingly), and we obtain
\[
  \left\|\sum_{l=1}^{k-1}\lambda_l\nabla^ju_{k-l}(0)\right\|_{\ell^1}\leq
  \epsilon C_u\frac{r^jR^kj!k!}{(j+k+2)^m}.
  \]

If in addition $j\geq k$, then in particular $j\geq k-l$ for all $1\leq l \leq
k-1$, so that one has the more precise control
\[
  \left\|\sum_{l=1}^{k-1}\lambda_l\nabla^ju_{k-l}(0)\right\|_{\ell^1}\leq
  C_{\lambda}C_u\cfrac{(r/4)^jR^{k}(j+k)!}{(j+k+2)^{m-\frac 12}}\sum_{l=1}^{k-1}\underbrace{\cfrac{l!(k-l+j)!}{(j+k)!}}_{=\binom{k+j}{l}^{-1}\leq
    1}\cfrac{(k+j+2)^{m-\frac 12}}{(l+2)^{m-\frac 12}(k-l+j+1)^{m-\frac 12}}.
\]
Again, by Lemma 2.13 in \cite{deleporte_toeplitz_2018}, we obtain, for
$m$ large enough,
  \[
  \left\|\sum_{l=1}^{k-1}\lambda_l\nabla^ju_{k-l}(0)\right\|_{\ell^1}\leq
  CC_{\lambda}C_u\left(\frac 34\right)^{m-\frac 12}\frac{(r/4)^jR^{k}(k+j)!}{(j+k+2)^{m-\frac
    12}}\leq
  \epsilon C_u\frac{(r/4)^jR^{k}(k+j)!}{(j+k+2)^{m-\frac 12}}.
\]
It remains to estimate
\[
  \left\|\nabla^j\left[x\mapsto \sum_{n=2}^{k+1}\sum_{l=0}^{k+1-n}\left.\cfrac{\Delta^n_{\kappa_x}}{n!}(u_l(y)b_{k+1-n-l}(x,y,\overline{w}))\right|_{(y,\overline{w})=(x,\overline{y}_c(x))}\right]_{x=0}\right\|_{\ell^1}.
\]
Let us first suppose $j\leq k$.
By \eqref{eq:ctrl-der-u}, and since
\[
  \|\nabla^{j_1}(b_l\circ \kappa^{-1})\|_{\ell^1}\leq
  \frac{(r/2)^{j_1}R^lj_1!l!}{(j+l+1)^m},
\]
one has
\begin{multline*}
  \left\|\nabla^j\left[x\mapsto \sum_{n=2}^{k+1}\sum_{l=0}^{k+1-n}\left.\cfrac{\Delta^n_{\kappa_x}}{n!}(u_l(y)b_{k+1-n-l}(x,y,\overline{w}))\right|_{(y,\overline{w})=(x,\overline{y}_c(x))}\right]_{x=0}\right\|_{\ell^1}\\
  \leq
  C_uC_b\cfrac{r^jR^k(j+1)!k!}{(j+k+2)^m}\sum_{n=2}^{k+1}R\left(\cfrac{C_{\kappa}^2
      r^2}{R}\right)^n\sum_{l=0}^{k+1-n}\sum_{i_1=0}^{2n}\sum_{j_1=0}^j\\
  \cfrac{(2n)!j!l!A(i_1,j_1,n)(k-n-l+1)!(2n-i_1+j-j_1)!}{2^{2n-i_1+j-j_1}i_1!(2n-i_1)!j_1!(j-j_1)!n!(j+1)!k!}\\\times
  \cfrac{(k+j+2)^m}{(i_1+l+j_1+1)^m(k+2+n-l-i_1+j-j_1)^m}.
\end{multline*}
Let us prove, similarly to the control on $\lambda_k$, that
\[
   \cfrac{(2n)!l!A(i_1,j_1,n)(k-n-l+1)!(2n-i_1+j-j_1)!}{2^{2n-i_1+j-j_1}i_1!(2n-i_1)!j_1!(j-j_1)!n!k!(j+1)}\leq
   16^n.
 \]

 First of all,
 \[
   \frac{(2n-i_1+j-j_1)!}{(2n-i_1)!(j-j_1)!}\leq 2^{2n-i_1+j-j_1},
 \]
 so we are left with
 \[
   \cfrac{(2n)!l!A(i_1,j_1,n)(k-n-l+1)!}{i_1!j_1!n!k!(j+1)}.
 \]
 Suppose first $i_1\leq n$, so that $A(i_1,j_1,n)=(i_1+j_1)!$. We are
 left with trying to bound
 \[
   \cfrac{(2n)!l!(i_1+j_1)!(k-n-l+1)!}{i_1!j_1!n!k!(j+1)}.
 \]
 This is increasing with respect to $i_1$ and $j_1$, so that this is
 smaller than
 \[
   \cfrac{(2n)!l!(n+j)!(k-n-l+1)!}{n!n!j!k!(j+1)}\leq 4^n\frac{l!(n+j)!(k-n-l+1)!}{k!(j+1)!}.
 \]
 The right-hand side is log-convex with respect to $l$, and it is
 equal, at the boundaries $l=0$ and $l=k+1-n$, to
 \[
   4^n\frac{(k+1-n)!(n+j)!}{k!(j+1)!}.
 \]
 
  This is a log-convex function of $n$, which varies from $2$ to
  $k+1$. At $n=2$ we obtain $4^2\frac{j+2}{k}\leq 16^2$ (since $j\leq k$).
  At $n=k+1$, we obtain instead
  \[
    4^{k+1}\frac{(k+j+1)!}{(j+1)!k!}\leq 4^{k+1}2^{k+1+j}\leq 16^{k+1},
  \]
  since $j\leq k$. Hence, for all $n$ it is smaller than $16^n$.
 
 If now $i_1\geq n$, and if $A(i_1,j_1,n)=j_1!i_1!$, then we must
 simply bound
 \[
   \frac{(2n)!l!(k-n-l+1)!}{n!k!(j+1)}\leq 4^n\frac{l!n!(k-n-l+1)!}{k!(j+1)}.
 \]
 With respect to $l$, the right-hand side reaches a maximum at $l=0$
 and $l=k-n+1$, yielding
 \[
   4^n\frac{n!(k-n+1)!}{k!(j+1)}.
 \]
This log-convex function of $n$ is equal to $4^2\frac{2}{k(j+1)}\leq
16^2$ at $n=2$, and at $n=k+1$ we obtain
\[
  4^{k+1}\frac{k+1}{j+1}\leq 16^{k+1};
\]
thus, again, it is smaller than $16^n$ in all cases.

 To conclude, if $i_1\geq n$ and $A(i_1,j_1,n)=(j_1+n)!(i_1-n)!$, then it remains
 to bound
 \[
   \frac{(2n)!l!(i_1-n)!(j_1+n)!(k-n-l+1)!}{i_1!j_1!n!k!(j+1)}\leq \frac{(2n)!l!(i_1-n)!(j_1+n)!(k-n-l+1)!}{i_1!j_1!n!k!(j+1)}.
 \]
 This function is increasing with respect to $j_1$ and decreasing with
 respect to $i_1$, so that it is maximal at $i_1=n,j_1=j$, where we
 obtain
 \[
   \frac{(2n)!l!(j+n)!(k-n-l+1)!}{n!(j+1)!n!k!}\leq 4^n\frac{l!(j+n)!(k-n-l+1)!}{(j+1)!k!}
   \]
 which we bounded a few lines above.
In conclusion,
\begin{multline*}
  \left\|\nabla^j\left[x\mapsto \sum_{n=2}^{k+1}\sum_{l=0}^{k+1-n}\left.\cfrac{\Delta^n_{\kappa_x}}{n!}(u_l(y)b_{k+1-n-l}(x,y,\overline{w}))\right|_{(y,\overline{w})=(x,\overline{y}_c(x))}\right]_{x=0}\right\|_{\ell^1}\\
  \leq
  C_uC_b\frac{r^jR^k(j+1)!k!}{(j+k+2)^m}\sum_{n=2}^{k+1}R\left(\cfrac{16C_{\kappa}^2
      r^2}{R}\right)^n\sum_{l=0}^{k+1-n}\sum_{i_1=0}^{2n}\sum_{j_1=0}^j
  \cfrac{(k+j+2)^m}{(i_1+l+j_1+1)^m(k+2+n-l-i_1+j-j_1)^m}\\
  \leq
  C_uC_b\frac{r^jR^k(j+1)!k!}{(j+k+2)^m}\sum_{n=2}^{k+1}R\left(\cfrac{16C_{\kappa}^2
      r^2}{R}\right)^n\sum_{l=0}^{k+1-n}\sum_{i_1=0}^{2n}\sum_{j_1=0}^j
  \cfrac{(k+j+n+2)^m}{(i_1+l+j_1+1)^m(k+2+n-l-i_1+j-j_1)^m}.
\end{multline*}
By Lemma 2.13 in \cite{deleporte_toeplitz_2018}, there exists $C>0$
such that, for $m$ large enough, (and $r,R$ large enough accordingly) one has
\begin{multline*}
   \left\|\nabla^j\left[x\mapsto \sum_{n=2}^{k+1}\sum_{l=0}^{k+1-n}\left.\cfrac{\Delta^n_{\kappa_x}}{n!}(u_l(y)b_{k+1-n-l}(x,y,\overline{w}))\right|_{(y,\overline{w})=(x,\overline{y}_c(x))}\right]_{x=0}\right\|_{\ell^1}\\
  \\\leq
  CC_uC_b\frac{r^jR^k(j+1)!k!}{(j+k+2)^m}\sum_{n=2}^{k+1}R\left(\frac{16C_{\kappa}^2r^2}{R}\right)^n.
\end{multline*}
Thus, for $R$ large enough (once $C_u,C_{\lambda},m,r$ are fixed),
\[
  \left\|\nabla^j\left[x\mapsto
      \sum_{n=2}^{k+1}\sum_{l=0}^{k+1-n}\left.\cfrac{\Delta^n_{\kappa_x}}{n!}(u_l(y)b_{k+1-n-l}(x,y,\overline{w}))\right|_{(y,\overline{w})=(x,\overline{y}_c(x))}\right]_{x=0}\right\|_{\ell^1}\leq
  \epsilon C_u\frac{r^jR^k(j+1)!k!}{(j+k+2)^m}.
\]
This concludes the proof of the control \eqref{eq:ctrl_g_4.2}.

Suppose now that $j\geq k$. We start again from
\begin{multline*}
  \left\|\nabla^j\left[x\mapsto
      \sum_{n=2}^{k+1}\sum_{l=0}^{k+1-n}\left.\cfrac{\Delta^n_{\kappa_x}}{n!}(u_l(y)b_{k+1-n-l}(x,y,\overline{w}))\right|_{(y,\overline{w})=(x,\overline{y}_c(x))}\right]_{x=0}\right\|_{\ell^1}\\\leq\sum_{n=2}^{k+1}(2d)^n\sum_{l=0}^{k+1-n}\sum_{i_1=0}^{2n}\sum_{j_1=0}^j\frac{(2n)!j!}{i_1!(2n-i_1)!j_1!(j-j_1)!n!}\|\nabla_x^{j_1}(\nabla_{\kappa}^{i_1})^{[\leq
    n]}u_l|_{x=0}\|_{\ell^1}\|b_{k+1-n-l}\circ \kappa^{-1}\|_{C^{j-j_1+2n-i_1}(W_1)}.
\end{multline*}

We decompose the sum into two parts, corresponding to
$j_1+\min(i_1,n)<l$ and $j_1+\min(i_1,n)\geq l$.

In the first part,
the control on $u_l$ is the same as previously: one has
\[
  \|\nabla_x^{j_1}(\nabla_{\kappa}^{i_1})^{[\leq n]}u_l|_{x=0}\|\leq
  C_u(C_{\kappa})^{i_1}\frac{r^{j_1+i_1}R^ll!A(i_1,j_1,n)}{(i_1+j_1+l+1)^m},
\]
and
\[
  \|b_l\circ \kappa^{-1}\|_{C^{j-j_1+2n-i_1}(W_1)}\leq C_b\frac{(r/8)^{j_1}R^{l}j_1!l!}{(j_1+l+1)^m},
  \]
so that
\begin{multline*}
  \sum_{n=2}^{k+1}(2d)^n\sum_{l=0}^{k+1-n}\sum_{i_1=0}^{2n}\sum_{j_1=0}^{l-\min(i_1,n)}
 \frac{(2n)!j!}{i_1!(2n-i_1)!j_1!(j-j_1)!n!}\|\nabla_x^{j_1}(\nabla_{\kappa}^{i_1})^{[\leq
   n]}u_l|_{x=0}\|_{\ell^1}\|b_{k+1-n-l}\circ \kappa^{-1}\|_{C^{j-j_1+2n-i_1}(W_1)}\\
 \leq C_uC_b\frac{(r/4)^jR^k(j+k+1)!}{(j+k+2)^{m-\frac
     12}}\sum_{n=2}^{k+1}\left(\frac{2dC_{\kappa}^2r^2}{R}\right)^n\sum_{l=0}^{k+1-n}\sum_{i_1=0}^{2n}\sum_{j_1=0}^{l-\min(i_1,n)}\\\frac{4^{i_1+j_1}(2n)!j!l!A(i_1,j_1,n)!(j-j_1+2n-i_1)!(k+1-n-l)!}{2^{j-j_1+2n-i_1}n!i_1!(2n-i_1)!j_1!(j-j_1)!(j+k+1)!}\\
 \times \frac{(j+k+2)^{m-\frac
   12}}{(j+k+2-n-l-j_1-i_1)^m(i_1+j_1+l+1)^m}.
\end{multline*}
Let us now prove that
 \[
   4^{j_1+i_1}\frac{j!(2n)!l!A(i_1,j_1,n)(k-n-l+1)!(2n-i_1+j-j_1)!}{2^{2n-i_1+j-j_1}n!i_1!(2n-i_1)!j_1!(j-j_1)!(j+k+1)!}\leq
   256^n\sqrt{j}.
 \]
 First, as before
 \[
   \frac{(2n-i_1+j-j_1)!}{(2n-i_1)!(j-j_1)!}\leq
   2^{2n-i_1+j-j_1},
 \]
 and we obtain
 \[
   4^{j_1+i_1}\frac{j!(2n)!l!A(i_1,j_1,n)(k-n-l+1)!}{n!i_1!j_1!(j+k+1)!}.
 \]
 If $i_1\leq n$, then $A(i_1,j_1,n)=(i_1+j_1)!$, and we obtain
 \[
   4^{i_1+j_1}\frac{j!(2n)!l!(i_1+j_1)!(k-n-l+1)!}{i_1!j_1!n!(j+k+1)!}\]
This quantity is increasing with respect to $j_1$, so that it is
maximal at $j_1=l-i_1$, yielding
\[
  4^l\frac{j!(2n)!l!l!(k-n-l+1)!}{i_1!(l-i_1)!n!(j+k+1)!}
\]
Suppose first $n\geq \frac l2$. Then, with respect to $i_1$, this
quantity reaches a maximum at $i_1=\frac l2$, and we obtain
\[
  4^l\binom{l}{\frac{l}{2}}\binom{2n}{n}\frac{j!n!l!(k-n-l+1)!}{(j+k+1)!}\leq
  4^n8^l\frac{j!n!l!(k-n-l+1)!}{(j+k+1)!}\leq 256^n.
\]

Suppose next $n\leq \frac l2$. Then, with respect to $i_1$, the
maximum of
\[
  4^l\frac{j!(2n)!l!l!(k-n-l+1)!}{i_1!(l-i_1)!n!(j+k+1)!}
\]
is reached at $i_1=n$, yielding
\[
  4^{l}\binom{2n}{n}\frac{j!l!l!(k-n-l+1)!}{(l-n)!(j+k+1)!}\leq
  4^{l+n}\frac{j!l!l!(k-n-l+1)!}{(l-n)!(j+k+1)!}.
\]
This decreasing function of $k$
reaches its maximum at $k=n+l-1$ (the minimal value for $k$ for
$n,l,j$ fixed). We obtain
\[
  4^{n+l}\frac{j!l!l!}{(l-n)!(j+l+n)!}\leq 4^n\left(4^l\frac{j!l!l!}{(l-n)!(j+l+n)!}\right).
\]
To conclude, the quantity inside parentheses
is a decreasing function of $n$ ; at $n=0$, we obtain
\[
  4^l\frac{j!l!}{(j+l)!}\leq 2\sqrt{j},
\]
since $l\leq j$. Thus, we can bound the original quantity by
$4^{n+\frac{1}{2}}\sqrt{j}\leq 256^n\sqrt{j}$.

If $i_1\geq n$ and $A(i_1,j_1,n)=i_1!j_1!$, it remains to bound
\[
  4^{i_1+j_1}\frac{j!(2n)!l!(k-n-l+1)!}{n!(j+k+1)!}.
\]
Again, this decreasing function of $k$ is maximal at $k=l+n-1$,
yielding
\[
  4^{i_1+j_1}\frac{j!(2n)!l!}{n!(j+n+l)!}\leq
  4^{2n}4^{j_1}\frac{j!(2n)!l!}{n!(j+n+l)!}\leq
  4^{n+l}\frac{j!(2n)!l!}{n!(j+n+l)!}\leq
  16^n4^l\frac{j!n!l!}{(j+n+l)!}.
\]
Now
\[
  4^l\frac{j!n!l!}{(j+n+l)!}
\]
is a decreasing function of $n$, and at $n=0$ it is equal to
\[
  4^l\frac{j!l!}{(j+l)!}\leq \sqrt{j}.
\]
Hence, in this case the original quantity is bounded by $16^n\sqrt{j}$.

If $i_1\geq n$ and $A(i_1,j_1,n)=(i_1-n)!(j_1+n)!$, we have to bound
\[
4^{i_1+j_1}\frac{j!(2n)!l!(i_1-n)!(j_1+n)!(k-n-l+1)!}{i_1!j_1!n!(j+k+1)!}.
\]
This quantity is decreasing with respect to $k$, and at the minimal
value $k=l+n-1$ it is equal to 
\[
  4^{i_1+j_1}\frac{j!(2n)!l!(i_1-n)!(j_1+n)!}{i_1!j_1!n!(j+n+l)!}.
\]
This is now increasing with respect to $j_1$, and at the maximal value
$j_1=l-n$, it is equal to
\begin{align*}
  4^{i_1+l-n}\frac{j!(2n)!l!(i_1-n)!l!}{i_1!(l-n)!n!(j+n+l)!}&\leq
                                                                               4^{i_1-n}\frac{\binom{2n}{n}}{\binom{i_1}{n}}4^l\frac{j!l!l!}{(l-n)!(j+n+l)!}\\
  &\leq 16^n4^l\frac{j!l!l!}{(l-n)!(j+l+n)!}.
\end{align*}
We proved above that
\[
  4^l\frac{j!l!l!}{(l-n)!(j+l+n)!}\leq \sqrt{j},
\]
and we obtain that the original quantity is bounded by
$16^n\sqrt{j}$.

We thus obtain
\begin{multline*}
  \sum_{n=2}^{k+1}(2d)^n\sum_{l=0}^{k+1-n}\sum_{i_1=0}^{2n}\sum_{j_1=0}^{l-\min(i_1,n)}
 \frac{(2n)!j!}{i_1!(2n-i_1)!j_1!(j-j_1)!n!}\|\nabla_x^{j_1}(\nabla_{\kappa}^{i_1})^{[\leq
   n]}u_l|_{x=0}\|_{\ell^1}\|b_{k+1-n-l}\circ \kappa^{-1}\|_{C^{j-j_1+2n-i_1}(W_1)}\\
 \leq
 C_uC_b\frac{(r/4)^jR^k(j+k+1)!}{(j+k+2)^{m-\frac 12}}\sum_{n=2}^{k+1}\left(\frac{512dC_{\kappa}^2r^2}{R}\right)^n\sum_{l=0}^{k+1-n}\sum_{i_1=0}^{2n}\sum_{j_1=0}^{l-\min(i_1,n)}\frac{\sqrt{j}(j+k+2)^{m-\frac
   12}}{(j+k+2-n-l-j_1-i_1)^m(i_1+j_1+l+1)^m}.
\end{multline*}
Since $\sqrt{j}\leq\sqrt{j+k+2}$, one can apply Lemma 2.13 in
\cite{deleporte_toeplitz_2018} and obtain, for $R$ large enough,
\begin{multline*}
  \sum_{n=2}^{k+1}(2d)^n\sum_{l=0}^{k+1-n}\sum_{i_1=0}^{2n}\sum_{j_1=0}^{l-\min(i_1,n)}
 \frac{(2n)!j!}{i_1!(2n-i_1)!j_1!(j-j_1)!n!}\|\nabla_x^{j_1}(\nabla_{\kappa}^{i_1})^{[\leq
   n]}u_l|_{x=0}\|_{\ell^1}\|b_{k+1-n-l}\circ \kappa^{-1}\|_{C^{j-j_1+2n-i_1}(W_1)}\\ \leq
 \epsilon C_u\frac{(r/4)^jR^l(j+k+1)!}{(j+k+2)^{m-\frac 12}}.
\end{multline*}
If $j_1+\min(i_1,n)\geq l$, then
the control on $u_l$ takes the form
\[
  \|\nabla_x^{j_1}(\nabla_{\kappa}^{i_1})^{[\leq n]}u_l|_{x=0}\|_{\ell^1}\leq
  (C_{\kappa})^{i_1}C_u\frac{(r/4)^jR^lB(i_1,j_1,l,n)}{(1+j_1+i_1+l)^{m-\frac 12}}
\]
with
\[
  B(i_1,j_1,l,n)=
  \begin{cases}
    \max((n+j_1+l)!(i_1-n)!,\,(j_1+l)!i_1!)&\text{ if }i_1\geq n\\
    (i_1+j_1+l)! &\text{ else.}
  \end{cases}
\]
Together with
\[
  \|b_{l}\circ \kappa^{-1}\|_{C^{j_1}(W_1)}\leq C_b\frac{(r/8)^{j_1}R^lj_1!l!}{(j_1+l+1)^m},
\]
we obtain
\begin{multline*}
  \sum_{n=2}^{k+1}(2d)^n\sum_{l=0}^{k+1-n}\sum_{i_1=0}^{2n}\sum_{j_1=l-\min(i_1,n)}^{j}
 \frac{(2n)!j!}{i_1!(2n-i_1)!j_1!(j-j_1)!n!}\|\nabla_x^{j_1}(\nabla_{\kappa}^{i_1})^{[\leq
   n]}u_l|_{x=0}\|_{\ell^1}\|b_{k+1-n-l}\circ \kappa^{-1}\|_{C^{j-j_1+2n-i_1}(W_1)}\\ \leq
 C_uC_b\frac{(r/4)^jR^k(j+k+1)!}{(j+k+2)^{m-\frac
     12}}\sum_{n=2}^{k+1}\left(\frac{2dC_{\kappa}^2r^2}{R}\right)^n\sum_{l=0}^{k+1-n}\sum_{i_1=0}^{2n}\sum_{j_1=l-\min(i_1,n)}^{j}\\
 \frac{(2n)!j!B(i_1,j_1,l,n)(k+1-n-l)!(j-j_1+2n-i_1)!}{2^{2n-i_1+j-j_1}i_1!(2n-i_1)!j_1!(j-j_1)!n!(j+k+1)!}\\
   \times \frac{(j+k+2)^{m-\frac 12}}{(1+j_1+i_1+l)^{m-\frac 12}(2+k+n+j-l-j_1-i_1)^{m}}.
 \end{multline*}
 Let us prove that, in this sum, one has always
 \[
   \frac{(2n)!j!B(i_1,j_1,l,n)(k+1-n-l)!(j-j_1+2n-i_1)!}{2^{2n-i_1+j-j_1}i_1!(2n-i_1)!j_1!(j-j_1)!n!(j+k+1)!}\leq
   4^n.
 \]
 First,
 \[
   \frac{(j-j_1+2n-i_1)!}{(j-j_1)!(2n-i_1)!}\leq 2^{2n-i_1+j-j_1},
 \]
 and it remains to bound
 \[
   \frac{(2n)!j!B(i_1,j_1,l,n)(k+1-n-l)!}{i_1!j_1!n!(j+k+1)!}.
 \]
 If $i_1\leq n$, we obtain
 \[
   \frac{(2n)!j!(i_1+j_1+l)!(k+1-n-l)!}{i_1!j_1!n!(j+k+1)!}.
 \]
 This quantity is increasing with respect to $i_1$ and $j_1$, so that
 it is maximal at $i_1=n$ and $j_1=j$, where we obtain
 \[
   \frac{(2n)!(n+j+l)!(k+1-n-l)!}{n!n!(j+k+1)!}=\frac{\binom{2n}{n}}{\binom{j+k+1}{j+n+l}}\leq
   4^n.
 \]
 If $i_1\geq n$ and $B(i_1,j_1,l,n)=i_1!(j_1+l)!$, we obtain
 \[
   \frac{(2n)!j!(j_1+l)!(k+1-n-l)!}{j_1!n!(j+k+1)!}.
 \]
 This increasing function of $j_1$ reaches a maximum at $j_1=j$, where
 we obtain
 \[
   \frac{(2n)!(j+l)!(k+1-n-l)!}{n!(j+k+1)!}=\frac{\binom{2n}{n}}{\binom{j+k+1}{n,j+l}}\leq 4^n.
 \]
 If $i_1\geq n$ and $B(i_1,j_1,l,n)=(i_1-n)!(j_1+l+n)!$, then we
 obtain
 \[
   \frac{(2n)!j!(i_1-n)!(j_1+l+n)!(k+1-n-l)!}{j_1!n!i_1!(j+k+1)!}.
 \]
 This is an increasing function of $j_1$, as well as a decreasing
 function of $i_1$, so that it is maximal at $i_1=n,j_1=j$, where we
 obtain again
 \[
   \frac{(2n)!(j+l+n)!(k+1-n-l)!}{n!n!(j+k+1)!}=\frac{\binom{2n}{n}}{\binom{j+k+1}{j+l+n}}\leq
   4^n.
 \]
As before, we conclude using Lemma 2.13 in
\cite{deleporte_toeplitz_2018}; if $m,r,R$ are
large enough, then we obtain
\begin{multline*}
  \sum_{n=2}^{k+1}(2d)^n\sum_{l=0}^{k+1-n}\sum_{i_1=0}^{2n}\sum_{j_1=l-\min(i_1,n)}^{j}
 \frac{(2n)!j!}{i_1!(2n-i_1)!j_1!(j-j_1)!n!}\|\nabla_x^{j_1}(\nabla_{\kappa}^{i_1})^{[\leq
   n]}u_l|_{x=0}\|_{\ell^1}\|b_{k+1-n-l}\circ \kappa^{-1}\|_{C^{j-j_1+2n-i_1}(W_1)}\\ \leq
 \epsilon C_u\frac{(r/4)^jR^l(j+k+1)!}{(j+k+2)^{m-\frac 12}}.
\end{multline*}

This concludes the proof of \eqref{eq:ctrl_g_sharp_4.2}. Now, we can
apply Lemma \ref{prop:transport}:
there exists $C(b_0,\varphi)$ such that
\[
  \|\nabla^ju_k(0)\|_{\ell^1}\leq \epsilon
  C(b_0,\varphi)C_u\cfrac{r^jR^k(j+k)!}{(j+k+1)^m}.
\]
If $\epsilon$ is chosen such that $\epsilon<C(b_0,\varphi)^{-1}$, one
can conclude the induction.

{\bfseries Third step.}

We successfully constructed and controlled the sequences
$(\lambda_k)_{k\geq 0}$
and $(u_k)_{k\geq 0}$ that satisfy \eqref{eq:eigval} at every order. Let us now
prove that $u_k$ is controlled on a small neighbourhood of $0$.

In the second step, we controlled the functions $u_k$ as follows,
\emph{at zero}:
\[
  \|\nabla^ju_k(0)\|_{\ell^1}\leq
  C_u\cfrac{r^jR^kj!k!}{(j+k+1)^m}.
\]
Since $u_k$ is real-analytic, in a small neighbourhood of
zero, it is given by the power series
\[
  u_k(y)=\sum_{\nu}\cfrac{\nabla^{\nu}u_k(0)}{\nu!}\,y^{\nu}.
\]
Since
\[
  \cfrac{\nabla^{\nu}u_k(0)}{\nu!}\leq
  C_uR^kk!\cfrac{|\nu|!}{\nu!}r^{|\nu|}\leq
  C_uR^kk!(rd)^{|\nu|},
\]
the power series above converges for $y\in P(0,(rd)^{-1})$, the
polydisk centred at zero with radius $(rd)^{-1}$. Moreover, for every
$a<1$, there exists $C(a)$ such that
\[
  \sup_{P(0,a(rd)^{-1})}|u_k|\leq C(a)C_uR^kk!.
\]
In particular, by Proposition 2.14 in \cite{deleporte_toeplitz_2018}, for every $a<\frac 12$,
there exists $C(a)$ such that
\[
  \|u_k\|_{H\left(-d,\frac{d^2r}{a},P(0,\frac{a}{rd})\right)}\leq C(a)C_uR^kk!.
\]
In other terms, letting $V=P(0,a(2rd)^{-1})$, for every $j\geq 0$, one
has
\[
  \|u_k\|_{C^j(V)}\leq
  C(a)C_u\cfrac{R^k(\frac{d^2}{a}r)^jj!k!}{(j+1)^{-d}}.
\]
In particular, $u$ is an analytic symbol on $V$.
\end{proof}

We are now in position to perform an analytic summation.

\begin{lem}\label{prop:almost-hol}
  Let $f$, $V$, $\varphi$, $(u_k)_{k\geq 0}$, be as in
  Proposition \ref{prop:Wells-WKB}. There exists $c'>0$, $c_0>0$ and $C>0$
  such that, for all $0\leq c<c_0$, for all $N\in \N$, with
  \[
    u(N)=\mathds{1}_V\psi_0^Ne^{N\varphi}\sum_{k=0}^{cN}N^{-k}u_k,
  \]
  one has
  \[
    \|(1-S_N)u(N)\|_{L^2(M)}\leq Ce^{-c'N}.
    \]
  \end{lem}
  \begin{proof}
    Let $R>0$ be as in Proposition \ref{prop:Wells-WKB}. There exists
    $C_u>0$ such that, for all $k\in \N$,
    \[
      \sup_V\|u_k\|\leq C_uR^kk!.
    \]
    In particular, by Proposition \ref{prop:anal-symb}, for all
    $c\leq\frac{e}{3R}$, the sum
    \[
      \sum_{k=0}^{cN}N^{-k}u_k
    \]
    is bounded uniformly with respect to $N$.

    Let now $W\subset \subset V$ be such that
    $0\in W$, and let $\chi:M\to [0,1]$ be a smooth function such
    that $\mathds{1}_{W}\leq \chi \leq \mathds{1}_{V}$.

    Since there exists $\epsilon>0$ and $C>0$ such that, for all $x\in
    V$, for all $N\in \N$,
    \[
      |\psi_0^N(x)e^{N\varphi(x)}|\leq CN^{d}e^{-\epsilon
        \dist(x,0)^2N},
    \]
    and since $1-\chi$ is supported outside $W$, then there exists $C>0$ and $c'>0$ such that, for all $N\in \N$,
    \[
      \|(1-\chi)u(N)\|_{L^2(M)}\leq Ce^{-c'N}.
    \]
    In particular, since $S_N$ is an orthogonal projection,
    \[
      \|S_N[(1-\chi)u(N)]\|_{L^2(M)}\leq Ce^{-c'N}.
    \]
    Now
    \[
      \overline{\partial}(\chi u(N))=(\overline{\partial}\chi) u(N)
    \]
    satisfies
    \[
      \|(\overline{\partial}\chi) u(N)\|_{L^2(M)}\leq Ce^{-c'N}
    \]
    because $\overline{\partial}\chi$ is supported outside $W$ as
    well.

    We conclude using the H\"ormander $\overline{\partial}$
    inequality (see for instance \cite{tian_set_1990}, Proposition
    1.1, or \cite{deleporte_low-energy_2019}, Proposition 2.3.3)
    \[
      \|(1-S_N)v\|_{L^2(M)}\leq N^{-\frac 12}\|\overline{\partial}v\|_{L^2(M)}.
    \]
    Hence
    \[
      \|(1-S_N)\chi u(N)\|_{L^2(M)}\leq Ce^{-c'N},
    \]
    and we can conclude:
    \[
      \|(1-S_N)u(N)\|_{L^2(M)}\leq \|(1-\chi)u(N)\|_L^2(M) +
      \|(1-S_N)\chi u(N)\|_{L^2(M)} + \|S_N(1-\chi)u(N)\|_{L^2(M)}\leq
      3Ce^{-c'N}.
      \]
  \end{proof}

\begin{prop}\label{prop:sum-WKB}
  Let $f$, $V$, $(u_k)_{k\geq 0}$, $(\lambda_k)_{k\geq 0}$ be as
  above. There exists $c>0$, $c'>0$ and $C>0$ such that, for every $N\in \N$,
\[
  \left\|\left(T_N(f)-\sum_{j=0}^{cN}N^{-j-1}\lambda_j\right)\left(\mathds{1}_V\psi_{0}^Ne^{N\varphi}\sum_{k=0}^{cN}N^{-k}u_k\right)\right\|_{L^2(M)}\leq
  Ce^{-c'N}.
\]
\end{prop}
\begin{proof}  
Let $c>0$ be small enough, so that one can apply Proposition
\ref{prop:anal-symb}: $\sum_{k=0}^{cN}N^{-k}u_k$,
$\sum_{j=0}^{cN}N^{-j-1}\lambda_j$ are bounded independently on
$N$. Let
\begin{align*}
  u(N)&=\mathds{1}_V\psi_0^Ne^{N\varphi}\sum_{k=0}^{cN}N^{-k}u_k\\
  \lambda(N)&=\sum_{j=0}^{cN}N^{-j-1}\lambda_j.
\end{align*}

Outside of $V$, our presumed quasimode $u(N)$ is $0$, and one has also
\[
  \left\|T_N(f)u_N\right\|_{L^2(M\setminus
    V)}=O(e^{-c'N}):
\]
indeed, since $\varphi$ is admissible, outside any open set $W\subset \subset
V$ such that $0\in W$, one has $|\psi_0^Ne^{-N\varphi}|\leq Ce^{-c'N}$
for some $c'>0$, and the Szeg\H{o} projector $S_N$ decays away from
the diagonal, so that $\|\mathds{1}_{M\setminus
  V}S_N\mathds{1}_{W}\|\leq Ce^{-c'N}$ as well.

Since $S_Nu(N)=u(N)+O(e^{-c'N})$ by Lemma \ref{prop:almost-hol}, we
now replace $S_NfS_Nu(N)-\lambda(N)u(N)$ with
$S_Nfu(N)-\lambda(N)u(N)$, and estimate the $L^2$ norm of the latter
on $V$.
By construction, on $V$, there holds
\begin{multline*}
  [\left(S_Nf-\lambda(N)\right)u(N)](x)\\
  =-\sum_{j=0}^{cN}\sum_{k=cN-j}^{cN}N^{-1-j-k}\psi_{0}^N(x)e^{N\varphi(x)}\lambda_ju_k(x)+\sum_{j+k\leq cN}N^{-1-j-k}\psi_{0}^N(x)e^{N\varphi(x)}R(j,k,N)(x),
\end{multline*}
where $R(j,k,N)$ is the remainder at order $cN-k-j$ in the stationary
phase Lemma applied to
\[
  N^{2d}\lambda_je^{-N\varphi(x)}\int_{y\in
    M}e^{-N\Phi_1(x,y,\overline{y},0)+N\varphi(y)}(u* b)_k(x,y,\overline{y})\dd
  y.
\]

Since $\lambda*u$ is an analytic symbol by Proposition
\ref{prop:anal-symb}, we have, for $c>0$ and $c'>0$ small enough,
\[
    \left\|\sum_{j=0}^{cN}\sum_{k=cN-j}^{cN}N^{-1-j-k}\lambda_ju_k\right\|_{L^{\infty}(V)}\leq Ce^{-c'N},
  \]
  so that
  \[
  \left\|\left(\sum_{j=0}^{cN}\sum_{k=cN-j}^{cN}N^{-1-j-k}\lambda_ju_k(x)\right)\psi_{0}^N(x)e^{N\varphi(x)}\right\|_{L^{2}(V)}\leq
  Ce^{-c'N}.
\]
The remainder $R(j,k,N)$ can be estimated using Proposition 3.13 in
\cite{deleporte_toeplitz_2018}. Indeed, let $r>0$ and $R>0$ be such
that $u\in S^{r,R}_4(V)$ and $b\in S^{r,R}_4(V)$. By Proposition
\ref{prop:anal-symb}, $u*b$ is an analytic symbol of the same class,
so that
\[
  \|(u*b)_k\|_{C^j(V)}\leq CC_uC_bR^kr^j(j+k)!\leq
  (CC_uC_b(2R)^kk!)(2r)^jj!.
\]
In particular, $(u*b)_k$ admits a holomorphic extension to a
$k$-independent complex
neighbourhood $\widetilde{V}$ of $V$, with
\[
  \sup_{\widetilde{V}}|(u*b)_k|\leq CC_uC_b(2R)^kk!.
\]
In particular, by Proposition 3.13 in \cite{deleporte_toeplitz_2018},
one has, for some $c_1>0$, that the remainder at order $c_1N$ in the
stationary phase Lemma applied to
\[
  N^{2d}\lambda_je^{-N\varphi(x)}\int_{y\in
    M}e^{-N\Phi_1(x,y,\overline{y},0)+N\varphi(y)}(u* b)_k(x,y,\overline{y})\dd
  y
\]
is smaller than $CC_uC_b(2R)^k(2R)^jj!k!e^{-c'N}$.
In particular, 
\[
  \left(\frac{1}{n!}\Delta_{\kappa_x}^n((u*b)_kJ)(y_c)\right)_{n}
\]
is an analytic symbol in a fixed class, with norm smaller than $C(2R)^kk!$.

If $j+k<\frac 12 cN$, we will compare $R(j,k,N)$ to the remainder at
order $c_1N$. If $j+k\geq \frac 12 cN$, we will compare $R(j,k,N)$ to
the remainder at order $0$.

Without loss of generality, $c<c_1$. Then, for all $j,k$ such that
$j+k<\frac 12 cN$, since the expansion in the stationary phase
\[
  \sum_{n=cN-j-k}^{c_1N}(n!N^{d +
    n})^{-1}\Delta_{\kappa_x}^n((u*b)_kJ)(y_c)
\]
corresponds to an analytic symbol, then by Lemma \ref{prop:anal-symb}
this sum is $O(e^{-c'N})$; thus if $j+k<c/2$ one has
\[
  R(j,k,N)\leq Ce^{-c'N}.
\]
If $\frac 12 cN<j+k<cN$, then, on one hand
\[
   N^{-1-j-k}\left|N^{2d}\lambda_je^{-N\varphi(x)}\int_{y\in
    M}e^{-N\Phi_1(x,y,\overline{y},0)+N\varphi(y)}(u* b)_k(x,y,\overline{y})\dd
  y\right|\leq C\left(\frac{2R}{N}\right)^{j+k}(j+k)!
\]
is smaller than $Ce^{-c'N}$ if $c$ is small enough; on the other hand,
again
\[
  \left(\frac{1}{n!}\Delta_{\kappa_x}^n((u*b)_kJ)(y_c)\right)_{n}
\]
is an analytic symbol in a fixed class (with norm smaller than
$C(2R)^kk!$), so that, by Proposition \ref{prop:anal-symb}, if $c$ is
small enough,
\[
  N^{d-1-j-k}\lambda_j\sum_{n=0}^{cN-j-k}\frac{1}{n!N^n}\Delta_{\kappa_x}^n((u*b)_kJ)(y_c)<C\left(\frac{2R}{N}\right)^{j+k}(j+k)!\leq Ce^{-c'N}.
\]
This concludes the proof.
      \end{proof}

\section{Spectral estimates at the bottom of a well}
\label{sec:spectr-estim-at}

\subsection{End of the proof of Theorem \ref{thr:WKB-Bottom}}
\label{sec:end-proof-theorem}

We now prove part 2 of Theorem \ref{thr:WKB-Bottom}.
Suppose that $\min(f)=0$ and that the minimal set of $f$ consists in a
finite-number of non-degenerate minimal points $P_1,\ldots,P_j$. At
each of these points $P_i$ with $1\leq i\leq j$, one can construct (see Proposition \ref{prop:sum-WKB}) a sequence $v_i(N)$ of
$O(e^{-c'N})$-eigenfunctions of $T_N(f)$. From Proposition \ref{prop:WKB-formal}, if $\mu$
denotes the Melin value (see Section 3.3 of
\cite{deleporte_low-energy_2016}), then, for every $1\leq i\leq j$ one
has \[T_N(f)v_i(N)=N^{-1}\mu(P_i)v_i(N)+O(N^{-2}).\] Moreover, from
Theorem B in \cite{deleporte_low-energy_2016}, for $\epsilon>0$ small, the number of
eigenvalues of $T_N(f)$ in the interval
$[0,\stackrel[1\leq i\leq j]{}{\min}\mu(P_i)+N^{-1}\epsilon]$ is
exactly the number of $i$'s such that $P_i$ minimises $\mu$.

Hence, any normalised sequence of ground states of $T_N(f)$ is
$O(Ne^{-c'N})=O(e^{-(c'-\epsilon)N})$-close to a linear combination of
those $v_i(N)$ whose associated well $P_i$ minimises $\mu$ (as the
spectral gap is of order $N^{-1}$ and the the $v_i(N)$'s are $O(e^{-c'N})$-eigenvectors). This
concludes the proof.

\subsection{Tunnelling}
\label{sec:tunnelling}

The main physical application of Theorem \ref{thr:WKB-Bottom} is the
study of the spectral gap for Toeplitz operators that enjoy a local
symmetry. Let us formulate a simple version of this result.

\begin{prop}\label{prop:tunn-lower}
  Suppose that $\min(f)=0$ and
  that the minimal set of $f$ consists of two non-degenerate critical
  points $P_0$ and $P_1$.
  Suppose further that these wells are \emph{symmetrical}: there
  exist neighbourhoods $U_0$ of $P_0$ and $U_1$ of $P_1$, and a
  $\omega$-preserving biholomorphism $\sigma:U_0\mapsto U_1$, such
  that $\sigma\circ f=f$.

  Then there exists $c>0$ and $C>0$ such that, for every $N\geq 1$,
  the gap between the two first eigenvalues of $T_N(f)$ is smaller
  than $Ce^{-c'N}$.
\end{prop}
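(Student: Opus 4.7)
The strategy is to construct an almost-eigenfunction $v_i(N)$ of $T_N(f)$ at each well $P_i$ via Proposition \ref{prop:sum-WKB}, to observe that the symmetry $\sigma$ forces both constructions to share a common approximate eigenvalue $\mu(N)$, to establish that $v_0(N)$ and $v_1(N)$ are almost orthogonal in $L^2$, and to conclude by a soft spectral argument that two eigenvalues of $T_N(f)$ lie within $O(e^{-cN})$ of $\mu(N)$.

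Applying Proposition \ref{prop:sum-WKB} at $P_0$ provides an admissible phase $\varphi_0$, sequences $(u_k^{(0)})_{k\geq 0}$ and $(\lambda_k)_{k\geq 0}$, and an $L^2$-normalised almost-eigenfunction $v_0(N)$ satisfying $\|(T_N(f)-\mu(N))v_0(N)\|_{L^2} = O(e^{-c'N})$, with $\mu(N) = \sum_{k=0}^{cN} N^{-1-k}\lambda_k$. Since $\sigma:U_0\to U_1$ is a biholomorphism preserving $\omega$, it also preserves the associated Riemannian metric $\omega(\cdot,J\cdot)$, and therefore identifies the germs of the K\"ahler structure and of $f$ at $P_0$ with those at $P_1$; locally it lifts to a Hermitian isomorphism of $L$. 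The Bergman kernel expansion of Proposition \ref{prop:Szeg-gen}, the Hamilton-Jacobi equation \eqref{eq:Hamilton-Jacobi}, the transport equations of Proposition \ref{prop:transport}, and consequently the coefficients $(\lambda_k)$ depend only on this germ, so the corresponding construction at $P_1$ produces an $L^2$-normalised almost-eigenfunction $v_1(N)$ with $\|(T_N(f)-\mu(N))v_1(N)\|_{L^2} = O(e^{-c'N})$ for the \emph{same} sequence $(\lambda_k)$, hence the \emph{same} $\mu(N)$.

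I would then show that $|\langle v_0(N),v_1(N)\rangle_{L^2}| = O(e^{-c''N})$ for some $c''>0$. Each $v_i(N)$ concentrates at $P_i$ with pointwise exponential decay: the coherent state $\psi_{P_i}^N(x)$ decays like $e^{-cN d(x,P_i)^2}$ at leading exponential rate, and admissibility of $\varphi_i$ (Definition \ref{def:adm-phase}) prevents $e^{N\varphi_i}$ from compensating this decay. Splitting $M$ into small neighbourhoods $V_0$ of $P_0$, $V_1$ of $P_1$, and the remaining set (where both $v_i(N)$ are exponentially small), and applying Cauchy-Schwarz on each piece, yields the bound.

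Finally, the pair $(v_0(N),v_1(N))$ forms an almost-orthonormal family of almost-eigenvectors of the self-adjoint operator $T_N(f)$, with common approximate eigenvalue $\mu(N)$ and total error $O(e^{-\tilde c N})$ with $\tilde c = \tfrac 12\min(c',c'')$. Decomposing each $v_i(N)$ as $\Pi_{\delta_N}v_i(N) + (I-\Pi_{\delta_N})v_i(N)$, where $\Pi_{\delta_N}$ is the spectral projector of $T_N(f)$ on $[\mu(N)-\delta_N, \mu(N)+\delta_N]$ with $\delta_N = e^{-\tilde c N/2}$, functional calculus shows $\|(I-\Pi_{\delta_N})v_i(N)\|\leq e^{-\tilde c N}/\delta_N = e^{-\tilde c N/2}$, so the vectors $\Pi_{\delta_N}v_0(N)$ and $\Pi_{\delta_N}v_1(N)$ remain almost orthonormal and span a $2$-dimensional subspace of $\mathrm{Range}(\Pi_{\delta_N})$. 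Hence $T_N(f)$ has at least two eigenvalues in $[\mu(N)-\delta_N, \mu(N)+\delta_N]$. By Part~2 of Theorem~\ref{thr:WKB-Bottom} combined with the $N^{-1}$ spectral gap from \cite{deleporte_low-energy_2016}, these are the two smallest eigenvalues of $T_N(f)$, whose gap is therefore at most $2\delta_N$. The main subtlety is the intrinsic-invariance argument in the second paragraph: one must verify that the WKB construction genuinely depends only on the local K\"ahler and Hermitian data, which is why we need $\sigma$ to be a K\"ahler automorphism rather than merely a symplectomorphism.
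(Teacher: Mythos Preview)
Your proposal is correct and follows essentially the same route as the paper's proof: construct WKB almost-eigenfunctions at each well via Proposition~\ref{prop:sum-WKB}, use the local K\"ahler isomorphism $\sigma$ to identify the approximate eigenvalues, exploit (almost-)orthogonality of the two states, and invoke the low-lying spectral description from \cite{deleporte_low-energy_2016} to conclude. Your treatment is in fact more careful than the paper's on two points: where the paper simply asserts that the two almost-eigenvectors ``have disjoint support'', you give the decay argument explicitly; and where the paper says ``there are at least two eigenvalues in an exponentially small window'', you spell out the spectral-projector estimate.
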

\begin{proof}
  Near $P_0$, one can build a sequence of $O(e^{-c'N})$-eigenvectors
  as in Proposition \ref{prop:sum-WKB}, with $c>0$; near
  $P_1$ one can build another sequence of $O(e^{-c'N})$-eigenvectors. Since $M$ and $f$ are equivalent near $P_0$
  and near $P_1$, the associated sequences of eigenvalues are
  identical up to $O(e^{-c'N})$, and the approximate eigenvectors are orthogonal
  with each other since they have disjoint support, so
  that there are at least two eigenvalues in an exponentially small
  window near the approximate eigenvalue. As above (see Theorem B in
  \cite{deleporte_low-energy_2016}), there are no more
  than two eigenvalues in the window $[\min Sp(T_N(f)),\min
    Sp(T_N(f))+\epsilon N^{-1}]$, for $\epsilon$ small; hence the claim.
  \end{proof}

  Unfortunately, the actual spectral gap between two symmetrical wells
  cannot be recovered from
  Proposition \ref{prop:Wells-WKB} or the solution $\varphi$ of the
  Hamilton-Jacobi equation, apart from the upper bound \eqref{eq:gap}.
  \begin{prop}
    \label{prop:tunn-upper}
    Suppose that $\min(f)=0$ and that the minimal set of $f$ consists
    of two symmetrical wells. Let $\lambda_0$ and $\lambda_1$ denote
    the two first eigenvalues of $T_N(f)$ (with multiplicity), and let
    \[
      \alpha=\liminf_{N\to
        +\infty}\left(-N^{-1}\log(\lambda_1-\lambda_0)\right).
      \]
      Then $\alpha$ cannot be bounded from above in terms of the best possible constant $c'$ in
      Proposition \ref{prop:sum-WKB}, and moreover $\alpha$ is unrelated to the solution
      $\varphi$ of the Hamilton-Jacobi equation.
  \end{prop}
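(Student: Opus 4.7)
The proof is by exhibiting explicit families of counter-examples, since the claim is that no general relationship holds. The natural testing ground is $M=\mathbb{CP}^1$ with the Fubini--Study form, for which $H_0(M,L^{\otimes N})$ is the space of degree-$N$ homogeneous polynomials and Toeplitz operators have completely explicit matrix representations (the usual spin-$N/2$ setting). I take $f$ to be a real-analytic double well with a global $\mathbb{Z}/2$ symmetry $\sigma$ exchanging the two minima, so that $T_N(f)$ splits into a symmetric and an antisymmetric sector and the spectral gap $\lambda_1(N)-\lambda_0(N)$ equals the difference between the bottom eigenvalues of the two sectors.

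The plan is to work with a one-parameter family $f_t$ of such symmetric double wells and to compute, for each $t$, three quantities: (i) the Hamilton--Jacobi phase $\varphi_t$ provided by Proposition \ref{prop:Hamilton-Jacobi-sol}, together with the natural rate $\sigma_\varphi(t)$ one is tempted to extract from it (for instance, twice the infimum of $\Re(\varphi_t)$ along a path reaching the other well, after maximal analytic continuation); (ii) the best constant $c'(t)$ produced by Proposition \ref{prop:Wells-WKB}, whose size is controlled by the summation radii in Proposition \ref{prop:anal-symb} and by the neighbourhood $U$ on which Proposition \ref{prop:Szeg-gen} applies; and (iii) the actual exponential tunnelling rate $\sigma(t)$. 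The goal is to show that $\sigma(t)$ depends on $t$ in a way that does not track $\sigma_\varphi(t)$, and that is moreover unrelated to the uniform upper bound imposed by the construction, which is essentially insensitive to $t$.

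The quantity $\sigma(t)$ is obtained by a sharp interaction matrix computation: writing $v_0^{(N)}$ and $v_1^{(N)}=\sigma^*v_0^{(N)}$ for the two WKB quasimodes produced in Proposition \ref{prop:sum-WKB}, and restricting to the two-dimensional space they (almost) span, the gap equals $2|\langle T_N(f_t)v_0^{(N)},v_1^{(N)}\rangle|+O(e^{-2c'N})$ after correcting for quasi-orthogonality. This matrix element is an integral of the form $\int e^{N(\Phi_1+\varphi_t(y)+\overline{\varphi_t(\sigma y)})}\cdot(\text{analytic symbol})\,\mathrm{d}y$ to which a global complex stationary phase argument applies, yielding the leading exponential rate as the saddle value of a specific phase. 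By choosing the family $f_t$ so that either the relevant saddle leaves the neighbourhood on which $\varphi_t$ is defined, or so that the saddle phase acquires contributions from higher analytic corrections not captured by $\varphi_t$, one obtains $\sigma(t)\neq \sigma_\varphi(t)$. In a second family, where the wells are moved closer together in the Agmon sense, $\sigma(t)$ tends to zero while the constant $c'$ of the WKB construction remains bounded below by a fixed constant (coming from the radii of Proposition \ref{prop:anal-symb}), proving the independence from $c'$.

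The main obstacle is step (iii): Proposition \ref{prop:tunn-lower} only gives an upper bound on $\lambda_1-\lambda_0$, and to obtain $\sigma(t)$ one needs a matching lower bound. This requires a Helffer--Sj\"ostrand-style interaction matrix analysis adapted to the Toeplitz setting, i.e.\ a sharp asymptotic evaluation (not merely an upper bound) of the off-diagonal matrix element $\langle T_N(f)v_0^{(N)},v_1^{(N)}\rangle$ via complex stationary phase over a deformation of the contour through the relevant saddles. A secondary difficulty is controlling the analytic continuation of $\varphi_t$ beyond the neighbourhood on which Proposition \ref{prop:Hamilton-Jacobi-sol} produces it, in order to formulate the naive prediction $\sigma_\varphi(t)$ unambiguously; the examples should in fact be chosen so that this ambiguity itself contributes to the failure.
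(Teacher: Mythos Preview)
Your proposal outlines a reasonable research program but does not constitute a proof: you explicitly identify the main obstacle---obtaining a sharp \emph{lower} bound on $\lambda_1-\lambda_0$ via a Helffer--Sj\"ostrand interaction matrix analysis in the Toeplitz setting---and then do not carry it out. Without that step, you only have upper bounds on the gap (Proposition~\ref{prop:tunn-lower}), which cannot pin down $\sigma(t)$ to any particular value, let alone show it moves independently of $\sigma_\varphi(t)$ or $c'(t)$. Building such a theory for Toeplitz operators is nontrivial and certainly not something one can assume.

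The paper bypasses this difficulty entirely by choosing \emph{rotationally invariant} symbols on $\mathbb{S}^2$: any function of the height alone yields a Toeplitz operator that is diagonal in the standard spin basis, so the eigenvalues are explicit and no interaction-matrix or complex stationary phase argument is needed. In the first example the symbol is even in the height, the North and South coherent states are exact ground states with identical energy (gap zero, $\sigma=+\infty$), and the Hamilton--Jacobi solution is $\varphi=0$. In the second example a compactly supported bump is added in the height variable, away from the poles; this preserves the \emph{local} symmetry of the two wells (so the hypothesis of the proposition is still satisfied) and leaves $\varphi=0$ near each well, yet produces a nonzero exponentially small gap whose rate can be moved at will by shifting the support of the bump. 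Thus $\sigma$ varies while $\varphi$ and the local WKB data stay fixed.

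One further point: the paper's examples are merely smooth, not real-analytic (it even argues $c'=0$ precisely because analyticity fails away from the wells). You restricted yourself to globally analytic symbols with a global $\mathbb{Z}/2$ symmetry, which is exactly what forces a genuine tunnelling computation. The paper's trick---rotational invariance to diagonalise exactly, plus a perturbation supported away from the wells so that all local WKB data are unchanged---is the missing idea that makes the argument elementary.
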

  \begin{proof}
    We first let $\chi:[-1,1]\mapsto \R$ be an even smooth function; we
    suppose that $\chi$ reaches its minimum only at $-1$ and $1$, with
    $\chi(-1)=0$ and $\chi'(-1)>0$. We
    consider the function $f=\chi\circ z$ on $\S^2$, where $z:\S^2\to
    [-1,1]$ is the height function. Then $f$ is invariant
    under a rotation around the vertical axis, so that $T_N(f)$ is
    diagonal in the natural spin basis (which consists of the
    eigenfunctions for $T_N(z)$). Since $\chi'(-1)>0$, $f$ has two
    global minima (the North and South pole) and they are elliptic
    points. Among the spin basis, the states
    that minimise the energy are the coherent states at the North and
    South poles, respectively; they have the same energy since $f$ is
    invariant under the symmetry $z\to -z$. In this
    setting the first eigenvalue is degenerate, and shared between two
    states which localise at either of the two non-degenerate wells;
    one has $\alpha=+\infty$.

    Let us give a formal solution to the Hamilton-Jacobi equation. In
    stereographic coordinates near one of the poles, the symbol reads
    $g(|r|^2)=g(r\overline{r})$ for some $g\in
    \C^{\infty}(\R,\R)$. The expression $g(rs)$ does not make sense if
    $rs$ is not a real number, but taking $s=0$ yields $g(r\times
    0)=0$.
    A formal solution of $\widetilde{g}(x,\partial \varphi)=0$ is thus
    given by $\varphi=0$. This corresponds indeed to the exponential
    decay of the exact ground states: $\varphi=0$ means that the
    ground state decays as fast as the coherent state (they actually
    coincide).

    In the system above, the formal solution of the Hamilton-Jacobi
    equation yields the correct decay rate. However, from the point of
    view of Proposition \ref{prop:sum-WKB}, one has
    $c'=0$: if $\chi$ is not real-analytic near $1$ we cannot hope to perform an analytic summation for the
    sequence $\lambda_i$ as in Proposition \ref{prop:sum-WKB}. To be
    more precise, the ground state is
    \[
      \frac{N+1}{\pi}\int_{\S^2}\chi(z(x))\left(\frac{1-z(x)}{2}\right)^{2N}\dd\mathrm{Vol}(x),
    \]
    so that, if $\chi$ is not real-analytic near $-1$, one cannot
    approximate $\lambda_0$ by an analytic symbol up to $O(e^{-c'N})$
    for some $c'>0$.

    We consider now a smooth
    perturbation of the function $\chi$ above: let $\chi_1:\R\mapsto [0,1]$
    be a smooth, non-zero function supported on a compact subset of  $[0,1)$. If we
    replace $\chi$ with $\chi+\chi_1$ in the previous discussion, we still get
    a symbol invariant under vertical rotation, so that it is diagonal in
    the spin basis. Since $(\chi+\chi_1)\circ z=\chi\circ z$ where the
    latter is smallest (near the poles), the two candidates for the
    ground state are still the coherent states associated with the
    North and South pole, for $N$ large enough (all other states have
    an energy gap of order at least $N^{-1}$). The Hamilton-Jacobi equation has the same formal
    solution. However, the two candidates for the ground state now
    have different energies, with an exponentially small but non-zero
    gap, of order $e^{-\alpha N}$. In fact, from
    \[
      \lambda_1-\lambda_0=\frac{N+1}{\pi}\int_{\S^2}[\chi_1(z(x))-\chi_1(-z(x))]\left(\frac{1+z(x)}{2}\right)^{2N}\dd\mathrm{Vol}(x),
    \]
    one obtains
    \[\alpha=-2\log\left(\frac{1+\max(\mathrm{supp}\chi_1)}{2}\right).\] Here, $\alpha $ can be made arbitrarily small
    by choosing $\chi_1$ with support arbitrarily close to $1$. In
    this case, we identified a family of Toeplitz operators with
    symmetrical wells, with identical (formal) admissible
    solution of the Hamilton-Jacobi equation, and identically $c'=0$, but such that one has
    possibly $\alpha=+\infty$ (if $\chi_1=0$) or $\alpha$ arbitrarily
    small.
  \end{proof}
  The counterexample proposed in the proof is not entirely
  satisfactory, because it is not real-analytic on the whole
  manifold. In fact, in the situation of Proposition
  \ref{prop:tunn-lower}, if $f$ is real-analytic everywhere, then
  there is a global symmetry $\sigma:M\to M$, whose square is the
  identity, and such that $\sigma\circ f =f$. However, what
  Proposition \ref{prop:tunn-upper} illustrates is that even if the
  solution of the Hamilton-Jacobi equation can be globally defined (as
  a section), the fact that one can perform analytic extensions only
  in a fixed, not necessarily large neighbourhood of the real set
  means that large errors may occur.

  Another possible obstruction comes from the fact that, contrary to
  the case of two symmetric wells for Schrödinger operators
  \cite{helffer_puits_1985}, the symmetry $\sigma:M\to M$ may not be
  quantizable. For instance, on the unit torus $M=\C/\Z^2$, consider $f$
  invariant under horizontal translation by $\frac 12$, and having two
  non-degenerate wells. Then one cannot quantize $\sigma$ and decompose $H_0(M,L)$ into odd
  and even sections (for the action of $\sigma$) if $N$ is an odd
  integer. The tunnelling rate $\alpha$ may actually be different in
  the odd and even case.




\bibliographystyle{abbrv}
\bibliography{main}
\end{document}